\documentclass{article}
\usepackage[margin=1in]{geometry}
\usepackage[normalem]{ulem}
\usepackage{lineno}
\usepackage{soul}
\usepackage{graphicx}
\usepackage{xcolor}
\usepackage{amssymb, amsmath, amsthm, hyperref, euscript, color}
\usepackage[dvipsnames]{xcolor}
\usepackage{amscd}
\usepackage{tikz-cd}
\usepackage{bbm}
\usepackage{enumitem}
\usepackage[english]{babel} 
\usepackage{mathtools}
\usepackage{makecell}
\usepackage[none]{hyphenat}
\usepackage{comment}
\usepackage[percent]{overpic}
\usepackage{tikz, tikz-cd}

\usepackage{caption}
\numberwithin{equation}{section}

\newtheoremstyle{theor}{6pt plus 1pt minus 1pt}{6pt plus 1pt minus 1pt}{\slshape}{}{\bfseries}{.}{5pt plus 1pt minus 1pt}{}
\newtheoremstyle{def}{6pt plus 1pt minus 1pt}{6pt plus 1pt minus 1pt}{}{}{\bfseries}{.}{5pt plus 1pt minus 1pt}{}
\newtheoremstyle{rmk}{6pt plus 1pt minus 1pt}{6pt plus 1pt minus 1pt}{}{}{\bfseries}{.}{5pt plus 1pt minus 1pt}{}
\newtheoremstyle{claim}{6pt plus 1pt minus 1pt}{6pt plus 1pt minus 1pt}{\slshape}{}{\bfseries}{.}{5pt plus 1pt minus 1pt}{}

\theoremstyle{theor}
\newtheorem{newstatement}{newstatement}[section]
\newtheorem{lemma}[newstatement]{Lemma}
\newtheorem{theorem}[newstatement]{Theorem}
\newtheorem{corollary}[newstatement]{Corollary}
\newtheorem{proposition}[newstatement]{Proposition}

\theoremstyle{def}
\newtheorem{definition}[newstatement]{Definition}

\theoremstyle{rmk}
\newtheorem{remark}[newstatement]{Remark}
\newtheorem*{remark*}{Remark}
\newtheorem{example}[newstatement]{Example}
\newtheorem*{example*}{Example}

\theoremstyle{claim}

\theoremstyle{theor}
\newtheorem{thm}{Theorem}

\expandafter\let\expandafter\oldproof\csname\string\proof\endcsname
\let\oldendproof\endproof
\renewenvironment{proof}[1][\proofname]{%
  \oldproof[\slshape #1]%
}{\oldendproof}
\def\provedboxcontents#1{$\square$}

\makeatletter
\newsavebox\myboxA
\newsavebox\myboxB
\newlength\mylenA

\newcommand*\xoverline[2][0.75]{%
    \sbox{\myboxA}{$\m@th#2$}%
    \setbox\myboxB\null
    \ht\myboxB=\ht\myboxA%
    \dp\myboxB=\dp\myboxA%
    \wd\myboxB=#1\wd\myboxA
    \sbox\myboxB{$\m@th\overline{\copy\myboxB}$}
    \setlength\mylenA{\the\wd\myboxA}
    \addtolength\mylenA{-\the\wd\myboxB}%
    \ifdim\wd\myboxB<\wd\myboxA%
       \rlap{\hskip 0.5\mylenA\usebox\myboxB}{\usebox\myboxA}%
    \else
        \hskip -0.5\mylenA\rlap{\usebox\myboxA}{\hskip 0.5\mylenA\usebox\myboxB}%
    \fi}
\makeatother

\newcommand{\Z}{\mathbb{Z}}

\DeclareMathOperator{\coker}{coker}

\DeclareMathOperator{\Hom}{Hom}

\DeclareMathOperator{\im}{im}

\let\geq\geqslant
\let\leq\leqslant

\newcommand{\darrow}[1]
{\mathchoice
{\xlongrightarrow{\hbox{\raisebox{0pt}[0pt][0pt]
  {\smash{$\scriptstyle\mkern1mu#1
  \mkern3mu:\mkern2mu1\mkern1.5mu$}}}}}
{\xlongrightarrow{\hbox{\raisebox{-1.5pt}[0pt][0pt]
  {\smash{$\scriptstyle\mkern2mu#1
  \mkern3mu:\mkern2mu1\mkern2.5mu$}}}}}
{}{}}

\tikzcdset{every label/.append style = {font=\normalsize,inner sep=0.75ex}}
\tikzcdset{every matrix/.append style = {row sep=5ex,column sep=1em}}

\newif\ifrevisions
\revisionstrue
\ifrevisions
  
  \newcommand{\del}[1]{{\color{red}\sout{#1}}}
  \newcommand{\query}[1]{{\color{Magenta}[\textbf{Query:} #1]}}
\else
  
  \newcommand{\del}[1]{}
  \newcommand{\query}[1]{}
\fi

\title{Cobordism groups of dihedral branched covers}
\author{Valentina Bais and Alexandra Kjuchukova}

\date{}

\begin{document}

\maketitle

\begin{abstract}
For every integer $n \geq 1$, we compute the cobordism groups of dihedral $n$-fold branched covers of $S^3$ with oriented and non-oriented branching sets. We show that the groups are cyclic, generated by the $n$-fold connected dihedral covers of  $(2,n)$-torus links. The isomorphism types of the groups are detected using explicit cobordism invariants defined in terms of
the Seifert forms on the branching sets, generalizing Cappell-Shaneson characteristic knots associated to dihedral covers. 
\end{abstract}

\section{Introduction}
In this note, we study the cobordism groups of dihedral branched covers (see Definition~\ref{def:dihedralcover}). The underlying sets of these groups are obtained from the set of all $D_n$-covers of $S^3$  (see Definition~\ref{def:Dncover}) under the natural equivalence relation: two covers $p_0 \colon Y_0\to S^3$ and $p_1\colon Y_1 \to S^3$ are identified if and only if there is a cobordism $W$ between $Y_0$ and $Y_1$ and a $D_n$-cover $q: W\to S^3 \times I$ branched over a properly embedded, smooth, orientable surface and restricting to the given covers on the boundary. We distinguish two cases. When the branch sets of $p_0$, $p_1$ and $q$ are oriented and the orientations are preserved under the cobordism, we have the oriented cobordism group $Cob_{D_n}(S^3)$; when the branch sets are unoriented, we have the non-orientable version $Cob^{\leftrightarrow}_{D_n}(S^3)$. Both sets can then be endowed with an abelian group structure under the operation of taking the split union of the branched sets, with the induced monodromies. See Definitions~\ref{def:orientedcob} and~\ref{def:unorcob} for a formal description of $Cob_{D_n}(S^3)$ and $Cob^{\leftrightarrow}_{D_n}(S^3)$, respectively. 

Cobordism groups of branched covers were also studied in \cite{A}, \cite{B}, \cite{N}, usually by introducing classifying spaces and computing their homotopy groups. In general, the cobordism groups themselves are rarely determined. One exception are irregular dihedral covers in degree $3$, which coincide with $3$-fold simple branched covers.  Moreover, for $n=3$ the cobordism groups of regular and irregular dihedral covers are isomorphic. The oriented cobordism group $Cob_{D_3}(S^3)$ therefore coincides with the oriented cobordism group of simple $3$-fold branched covers of $S^3$. The latter was computed by Hilden-Little in \cite[Theorem~9]{HL} in dimensions $3$, $4$ and $5$. In dimension $3$, we compute $Cob_{D_n}(S^3)$ for all integers $n \geq 1$.

\begin{thm}\label{thm:main}
   For every integer $n \geq 1$, there is an injective group homomorphism
    \[\kappa_n \colon Cob_{D_n}(S^3) \longrightarrow \Z_{2n},\] defined in Equation~\ref{eq:k_n-def}, 
    whose image is the subgroup of even residues if $n$ is odd and all of $\Z_{2n}$ if $n$ is even. The reduction of $\kappa_n$ modulo $n$ descends to a group isomorphism
    \[\kappa^{\leftrightarrow}_n \colon Cob^{\leftrightarrow}_{D_n}(S^3) \xlongrightarrow{\cong} \Z_n.\]
    In particular,
    \[Cob_{D_n}(S^3)\cong \begin{cases} \Z_n & n \text{ odd},\\ \Z_{2n} & n \text{ even},\end{cases} \qquad\quad Cob^{\leftrightarrow}_{D_n}(S^3)\cong \Z_n.\]

    In all cases, a generator is represented by the connected $n$-fold dihedral cover branched over an oriented $(2,n)$-torus link, endowed with the Fox coloring determined by the mod $n$ characteristic knot $\beta_n$; see Example~\ref{ex:generator}. Its invariant $\kappa_n$ is $1-n$.
\end{thm}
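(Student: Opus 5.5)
The plan is to define $\kappa_n$ via characteristic knots and then show it is a well-defined, injective homomorphism with the stated image. Start from a $D_n$-cover of $S^3$ branched over an oriented link $L$ with Fox $n$-coloring $\rho$. Following Cappell--Shaneson, choose a Seifert surface $F$ for $L$ and a mod $n$ characteristic knot $\beta\subset F$. Such a $\beta$ is a class in $H_1(F)$ with $(V+V^T)\beta\equiv 0 \pmod n$ that encodes $\rho$. Define
\[
\kappa_n=\mathrm{lk}(\beta,\beta^+) = \beta^T V\beta \pmod{2n},
\]
where $V$ is the Seifert form. The oriented count lands in $\Z_{2n}$ for the following reason. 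Since $\beta^T(V+V^T)\beta = 2\beta^TV\beta$ is divisible by $n$, the quantity $\beta^TV\beta$ is determined modulo $n$ once $\beta$ is fixed modulo $n$. With care it is even determined modulo $2n$, because changing $\beta$ by $n\gamma$ changes $\beta^TV\beta$ by $n\,\gamma^T(V+V^T)\beta + n^2\gamma^TV\gamma$, and both terms are $\equiv 0\pmod{2n}$ when $n$ is even, or can be handled separately when $n$ is odd. (If Equation~\ref{eq:k_n-def} normalizes differently, for instance including a signature or linking correction, the steps below adapt.) Well-definedness then has two parts: independence of $\beta$ within its class, as just sketched, and independence of $F$. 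The latter follows because any two Seifert surfaces are related by tubing and isotopy, and a tube contributes hyperbolic summands to $V$ that do not change $\beta^TV\beta$.

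\textbf{Cobordism invariance.} Given a $D_n$-cover of $S^3\times I$ branched over an oriented surface $\Sigma$ from $L_0$ to $L_1$, form the closed-up surface $F_0\cup\Sigma\cup \overline{F_1}$ and push it into $B^4$. The coloring extends over the complement, so the characteristic classes glue to a class $\hat\beta$ on a closed surface $\hat F\subset B^4$, or rather on a surface with boundary in $S^3\times\{1\}$. The difference $\kappa_n(L_0)-\kappa_n(L_1)$ equals the self-intersection of a $2$-chain bounded by $\hat\beta$ and its pushoff. The characteristic condition $(V+V^T)\beta\equiv0$ makes this chain $n$ times an integral class, plus a boundary. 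Its self-intersection is therefore $\equiv 0 \pmod{2n}$ in the oriented case, using evenness coming from orientability, and $\equiv 0\pmod n$ in the unoriented case. I expect this step to be the main obstacle. The delicate point is extracting the factor $2$ in the oriented case, and seeing exactly where reversing the orientation of some components breaks it. I would try the $4$-dimensional proof of Cappell--Shaneson / Kjuchukova's invariant: realize $\beta$ as the boundary of a surface in $B^4$ whose preimage in the branched cover is computable, and express $\kappa_n$ as an intersection number there. Additivity under split union is immediate, since $V$ becomes block diagonal.

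\textbf{Surjectivity and injectivity.} For surjectivity, compute the generator of Example~\ref{ex:generator}. The $(2,n)$-torus link has a genus-$0$ or genus-$1$ style Seifert surface with $V$ an $(n-1)\times(n-1)$ bidiagonal matrix, and $\beta_n$ is an explicit vector. A direct computation should give $\beta_n^TV\beta_n=1-n$. If $n$ is odd, $1-n$ is even and generates $2\Z_{2n}\cong\Z_n$; if $n$ is even, $1-n$ is odd and generates $\Z_{2n}$. The image therefore contains the stated subgroup. For odd $n$, the image is no larger because $2\beta^TV\beta\equiv0\pmod n$ forces $\beta^TV\beta \in n\Z_{2n}\cup\ldots$; combined with a parity argument, this shows $\kappa_n$ is even. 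Injectivity is the second hard step. Here I would show that every colored link is colored-cobordant to a split union of copies of the generator and trivially colored unknots. The moves are band moves and Reidemeister/crossing moves realized by saddles compatible with the coloring, reducing, in the style of Hilden--Little, to a normal form classified by $\kappa_n$.

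\textbf{Unoriented case.} Reversing orientations of components changes $\beta^TV\beta$ only by multiples of $n$, which one checks on a single component reversal. Hence $\kappa_n \bmod n$ descends to $\kappa_n^{\leftrightarrow}$. Injectivity follows from the same normal form, together with the observation that the orientation-reversed generator realizes the residue shift by $n$. Finally, $1-n\equiv1\pmod n$ shows that $\kappa_n^{\leftrightarrow}$ is onto $\Z_n$.
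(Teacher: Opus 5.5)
\textbf{Your invariant is mis-normalized, and this breaks the computations you rely on.} The paper sets $\kappa_n=\frac{1}{n}[\beta]^tA[\beta]=\frac{2}{n}\,\mathrm{lk}(\beta,\beta^+) \pmod{2n}$. That amounts to recording $\mathrm{lk}(\beta,\beta^+)$ modulo $n^2$, not modulo $2n$. Since $2\beta^TV\beta\equiv 0 \pmod n$, your quantity $\beta^TV\beta \bmod 2n$ lies in $\{0,n\}$ for $n$ odd. For $n$ even it lies in the four-element subgroup generated by $n/2$. Either way its image is too small to detect a group of order $n$ or $2n$ once $n\geq 3$.

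For $n$ odd your quantity is not even well defined. Write $A\beta=ny$. Replacing $\beta$ by $\beta+n\gamma$ changes $\beta^TV\beta$ by $n^2(\gamma^Ty+\gamma^TV\gamma)$, which is $\equiv n \pmod{2n}$ whenever $\gamma^Ty+\gamma^TV\gamma$ is odd. For $T(2,3)$ with $\beta=(1,2)$ and $\gamma=(1,0)$, the value goes from $-3$ to $-12$.

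Your torus computation is also off: $\beta_n^TV_n\beta_n=-\frac{n(n-1)}{2}$, and it is $\frac{1}{n}\beta_n^TA_n\beta_n$ that equals $1-n$. With the correct normalization, the change under $\beta\mapsto\beta+nx$ is $2n(x^Ty+x^TVx)\equiv 0\pmod{2n}$ (Lemma~\ref{lem:invariant}). Evenness for $n$ odd then follows from $n\kappa_n=2\beta^TV\beta$.

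\textbf{Injectivity has no mechanism.} Reducing every colored link by colored saddles to a split union of generators and trivially colored unknots is essentially the whole theorem. You give no actual reduction procedure, and no reason why $\kappa_n$ should control one.

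The paper avoids normal forms and proves an extension criterion (Proposition~\ref{prop:inj}). If $\kappa_n([p])=0$, then $[\beta]^tA[\beta]=a_1n^2$ with $a_1$ even. After stabilizing $S$ one may assume $[\beta]^tA[\beta]=0$. Following Kjuchukova--Orr, one then builds a Seifert solid $H\subset D^4$ with $\partial H=F\cup -S$, containing a mod $n$ characteristic surface $\Sigma$ with $\Sigma\cap\partial D^4=\beta$. Through the HNN presentation of $\pi_1(E_F)$, the pair $(\Sigma,\tau)$ gives a coloring extending $\rho_{\beta,\tau}$ (Proposition~\ref{prop:charsurf}), so $p$ bounds.

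Once oriented injectivity is available, your idea of absorbing the residue $n$ with the orientation-reversed generator is a valid substitute for the paper's two Hopf links (Example~\ref{ex:Hopf}, Proposition~\ref{prop:injnon}). Indeed, for $n$ even, reversing one component of $T(2,n)$ gives a link bounding an annulus with $A=[n]$ and $\kappa_n=1$.

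However, well-definedness of $\kappa_n^{\leftrightarrow}$ on $Cob^{\leftrightarrow}_{D_n}(S^3)$ needs more than checking component reversals. An incoherent band joining a component to itself cannot be reduced to reorienting components. The paper handles such saddles, and changes of orientation, through the Gordon--Litherland form on a common spanning surface (Lemmas~\ref{lem:glchar}, \ref{lem:transport} and~\ref{lem:nonorinv}).

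Finally, your four-dimensional argument for cobordism invariance is still only a sketch. The paper instead checks invariance under each elementary colored handle attachment. It first tubes the Seifert surface to remove its ribbon intersections with the band (Proposition~\ref{prop:cobordinv}).
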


\begin{remark}\label{rmk:cob}
    In Corollary \ref{cor:cob}, we compute one more cobordism group: the one where the underlying set is the set of $D_n$-covers branched over \textit{unoriented} links and the equivalence relation identifies $p_0$ and $p_1$ if and only if there is a $D_n$-cover cobordism $q \colon W \rightarrow S^3 \times I$ branched over an orientable but {\it unoriented} surface, see Definition \ref{def:unorientcob}. We show that this group is isomorphic to $Cob^{\leftrightarrow}_{D_n}(S^3)$.
\end{remark}

The homomorphism $\kappa_n$ is explicit and cobordism classes are easily computable. In Theorem \ref{thm:recovery-of-dihedral-homs}, we generalize a result by Cappell and Shaneson \cite{CS} and show that every dihedral cover $p \colon Y \rightarrow S^3$ branched along a link $L\subset S^3$ is determined by a \textit{mod $n$ characteristic link} $\beta$ for $L$ (see Definition~\ref{def:charlink}). The link $\beta$ is contained in a Seifert surface $S$ for $L$. The invariant $\kappa_n$ is given in terms of the self-linking number of $\beta$ with respect to the push-off determined by $S$: if $A$ denotes a symmetrized Seifert matrix of $S$, then 
\begin{equation}\label{eq:k_n-def}
    \kappa_n([p])=\frac{1}{n}[\beta]^t \,A\,[\beta] \pmod{2n}.
\end{equation}

Notice that $Cob_{D_n}(S^3)$ and $Cob^{\leftrightarrow}_{D_n}(S^3)$ are isomorphic for every odd integer $n$. On the other hand, when $n$ is even, forgetting the orientation on the branching set induces a map
\[Cob_{D_n}(S^3) \longrightarrow Cob^{\leftrightarrow}_{D_n}(S^3)\]
corresponding to the mod $n$ reduction
\[\Z_{2n} \longrightarrow \Z_n.\] 
The kernel of this map is generated by a dihedral cover branched over two copies of a Hopf link as the ones given in Example \ref{ex:Hopf}.

As an application of Theorem \ref{thm:main}, we show that every $3$-manifold arising as the total space of a $D_n$-cover over $S^3$ admits a $D_n$-cover over $S^3$ which extends over $D^4$, i.e. which is trivial in the (oriented) cobordism group. In order to do that, we introduce a set of moves that modify a given dihedral cover without altering its total space, generalizing Montesinos moves for $3$-fold simple branched covers, see \cite{M} and \cite{P}.
\begin{thm}\label{thm:C}
    Let $p \colon Y \rightarrow S^3$ be a branched $D_n$-cover, where $Y$ is a closed connected $3$-manifold. Then there exists a branched $D_n$-cover $p' \colon Y \rightarrow S^3$ which extends to a $D_n$-cover of $D^4$ branched over a smooth, properly embedded oriented surface. 
\end{thm}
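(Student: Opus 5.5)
The argument uses Theorem~\ref{thm:main} together with a family of local moves on the branching set that change the cobordism class by a known amount while leaving the total space $Y$ unchanged. Suppose $[p]\in Cob_{D_n}(S^3)$ is nonzero. Then $p'$ will be obtained from $p$ by a finite sequence of such moves that drive $\kappa_n$ to $0$. Injectivity of $\kappa_n$ then shows that $[p']=0$. By definition of the group, $p'$ is then cobordant to the empty cover, i.e.\ to the cover of $S^3$ bounding the trivial cover of $D^4$. Capping the cobordism $q\colon W\to S^3\times I$ with that $D^4$ gives a $D_n$-cover of $D^4$ branched over a smooth, properly embedded, oriented surface that restricts to $p'$ on the boundary.

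The moves I have in mind generalize the Montesinos moves for simple $3$-fold covers. Pick a small ball $B\subset S^3$ meeting the branch link $L$ in two parallel unknotted arcs whose meridians are colored by reflections $a,b\in D_n$. Replace this tangle by a twisted tangle, or add a crossing between two such arcs colored by suitable reflections, so that the following hold:
\begin{enumerate}
\item the preimage of $B$ in $Y$ is a union of balls or solid tori, and the new preimage is homeomorphic to the old one rel boundary, so the total space is unchanged;
\item the Fox coloring still extends over the new tangle.
\end{enumerate}
The model case is a crossing change between strands with the same reflection color. The preimage of the ball then consists of copies of $B^3$ branched trivially or $2$-fold over two arcs. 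A $2$-fold cover of a ball over a rational tangle is a solid torus, and any twist preserving the coloring lifts to a homeomorphism of this solid torus, provided the twisting is by full twists on equal-colored strands. For $D_n$, the relevant move is a full twist, or a half twist between strands whose colors are reflections differing by the right element. I would check this locally, using the regular $D_{2n}$-cover picture restricted to the $2$-fold quotients.

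I then compute the change in $\kappa_n$ via formula~\eqref{eq:k_n-def}. A twist region contributes a band to the Seifert surface, which adds a row and column to $A$, and the characteristic link $\beta$ (Theorem~\ref{thm:recovery-of-dihedral-homs}) changes by a multiple of the band's core. One should find that $\frac1n[\beta]^tA[\beta]$ shifts by a fixed nonzero amount, for instance $\pm 2$ for $n$ odd. Possibly one also needs a $\pm1$ change for $n$ even, coming from a move involving orientation, such as the Hopf link of Example~\ref{ex:Hopf}. That shift must generate the image of $\kappa_n$, namely $2\Z_{2n}$ for odd $n$ and $\Z_{2n}$ for even $n$. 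Repeating the move the appropriate number of times kills $\kappa_n$.

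The main obstacle is the combination of the first and third steps: finding a move that provably preserves $Y$ and whose effect on $\kappa_n$ is a generator of the image. For $n$ even this includes odd values, which are hard to reach by moves that are local on a Seifert surface. My first attempt would be the local model computation on the generator of Theorem~\ref{thm:main}, the $(2,n)$-torus link cover with $\kappa_n=1-n$. Adding a full twist there with strands of equal color should yield a torus link cover whose total space, a lens space or connected sum, is identified explicitly. From this I would read off the shift. If a single move is insufficient, I would fall back on changing the coloring rather than the link. Applying an automorphism of $D_n$, or rechoosing the coloring on a split trivial component, leaves $Y$ unchanged and rescales $\kappa_n$ by a unit. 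I would combine this with the moves to reach $0$.
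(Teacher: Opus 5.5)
\textbf{The move is missing, and the one you propose fails.} Your overall architecture matches the paper's: injectivity of $\kappa_n$ from Theorem~\ref{thm:main}, a local move that preserves the total space and shifts the class by a generator, iteration, and capping off with $D^4$. You correctly flag the obstacle, but the proposal never produces such a move, and that is the whole content of the proof. Your model case fails on both counts.
\begin{itemize}
\item \emph{It changes $Y$.} If both arcs in $B$ carry the same reflection $\tau$, the monodromy over $B$ has image $\langle\tau\rangle$. The preimage of $B$ is then $n$ copies of the double cover of $B$ branched over a trivial $2$-string tangle, i.e.\ $n$ solid tori. Adding half twists, full or not, is a rational tangle replacement. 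The twist lifts to a homeomorphism onto the \emph{new} preimage, but its boundary restriction does not extend over the old one, because the meridian slope changes. So this is a Dehn surgery and changes $Y$ in general. Already for double covers, a full twist turning the unknot into $T(2,3)$ turns $S^3$ into $L(3,1)$.
\item \emph{It does not change the class.} Two colored band moves show the twisted link is cobordant to the split union of the original with a $(2,m)$-torus link colored entirely by $\tau$. That link bounds, so $\kappa_n$ does not move.
\item \emph{The fallback cannot reach $0$.} An automorphism $y\mapsto y^k$ multiplies $\kappa_n$ by the unit $k^2$ (Remark~\ref{rmk:covesvsmonodromy}), so it never sends a nonzero value to $0$. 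A split unknot colored by a single reflection contributes $0$.
\end{itemize}

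\textbf{What the paper does instead.} It uses the Montesinos $n$-move of Definition~\ref{def:mont}. Take two arcs colored $\tau$ and $\tau y^k$ with $\gcd(k,n)=1$ and insert $n$ half twists. After $j$ half twists the colors at the top of the twist region are $\tau y^{\mp jk}$ and $\tau y^{\mp(j-1)k}$. These return to $(\tau,\tau y^k)$ exactly when $n\mid j$, which is why the move uses $n$ half twists. The proof then has two steps.
\begin{itemize}
\item \emph{Invariance of $Y$} (Lemma~\ref{lem:montinv}). Since $\tau$ and $\tau y^k$ generate $D_n$, the preimage of $B$ in the irregular $n$-fold cover is connected. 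Riemann--Hurwitz on a disk slice gives Euler characteristic $n-(n-1)=1$. So the preimage is a ball, and the lifted regluing map extends by the Alexander trick.
\item \emph{Change of class} (Lemma~\ref{lem:modify}). This is not computed from Seifert matrices but from a cobordism: two colored bands give $p+M^{\pm}_n\sim p\sqcup(\pm T(2,n))$. So $\kappa_n$ shifts by $\pm k^2(1-n)$ (Example~\ref{ex:generator}). This generates the image of $\kappa_n$ for both parities of $n$; for $n$ even, $k$ is odd and $k^2(1-n)$ is a unit mod $2n$. No separate orientation-dependent move is needed for $n$ even.
\end{itemize}

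\textbf{Caution: the ball is essential.} In the regular $2n$-fold cover the slice has Euler characteristic $2n-2n=0$, so the preimage of $B$ is an annulus times an interval. There the lifted map does not extend in general. For $n=3$, the move takes the $2$-component unlink colored $\tau,\tau y$ to $T(2,3)$. Both have irregular $3$-fold cover $S^3$, but their regular $6$-fold covers are $S^1\times S^2$ and $S^3$ respectively. So $Y$ must be read as the irregular cover, and in your criterion (1) the preimage really has to be a ball.
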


Our methods also easily recover the following well-known result, see \cite{BP}, \cite{M2}.
\begin{corollary}\label{cor:4-ball-covered}
    Every closed oriented 3-manifold $Y$ bounds a 4-manifold which is an irregular 3-fold cover of $B^4$. 
\end{corollary}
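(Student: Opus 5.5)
The corollary is the case $n=3$ of Theorem~\ref{thm:C}, combined with the classical Hilden--Montesinos theorem that every closed oriented $3$-manifold is an irregular dihedral $3$-fold cover of $S^3$. So the proof reduces to checking that these two inputs fit together. I would carry this out in three steps.

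\textbf{Step 1 (represent $Y$).} By Hilden and Montesinos, every closed oriented $3$-manifold $Y$ is a $3$-fold simple branched cover $p\colon Y\to S^3$ branched over a knot (or link). Simple $3$-fold covers are exactly irregular $D_3$-covers: the monodromy sends meridians to transpositions in $S_3\cong D_3$, which are the reflections. Hence $p$ is a $D_3$-cover in the sense of Definition~\ref{def:Dncover}. If $Y$ is disconnected, I would treat each component separately and take the boundary connected sum of the resulting $4$-manifolds over $B^4$; alternatively, one can assume $Y$ is connected as in Theorem~\ref{thm:C}.

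\textbf{Step 2 (make the class trivial).} Theorem~\ref{thm:C} then gives a new $D_3$-cover $p'\colon Y\to S^3$ that extends to a $D_3$-cover $q\colon W\to D^4$, branched over a smooth, properly embedded oriented surface $F$. The total space $W$ is a compact oriented $4$-manifold with $\partial W=Y$. Since $q$ restricts to $p'$ on the boundary and the monodromy of $q$ sends meridians of $F$ to reflections, $q$ is an irregular $3$-fold branched cover of $B^4$.

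If one prefers to avoid the moves behind Theorem~\ref{thm:C}, there is a direct route via Theorem~\ref{thm:main}. For $n=3$ the group $Cob_{D_3}(S^3)\cong\Z_3$ is generated by the cover over the $(2,3)$-torus knot, with $\kappa_3=-2$. A Montesinos-type move that preserves $Y$ but changes $\kappa_3$ by a unit lets one reach $\kappa_3=0$. Injectivity of $\kappa_3$ then gives a null-cobordism of $p'$, that is, a $D_3$-cover of $S^3\times I$ from $p'$ to the empty cover. Capping off the empty end, or equivalently viewing $S^3\times I\cup B^4$ as $B^4$, produces the required cover of $B^4$.

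\textbf{Main obstacle.} The only real content is producing a $Y$-preserving modification that kills the cobordism class, and this is exactly what Theorem~\ref{thm:C} provides. Given that theorem, the remaining subtlety is bookkeeping at the boundary. One must check that the bounding cover restricts to the irregular $3$-fold cover, with sheets indexed by the cosets of a reflection subgroup, rather than to the regular $6$-fold cover. This holds because the cover of $Y$ and the cover of $W$ are both built from the same $D_3$-representation of the complement.
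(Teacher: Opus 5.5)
Your argument is correct for connected $Y$, but your main route differs from the paper's. You obtain the corollary as the case $n=3$ of Theorem~\ref{thm:C}. That theorem rests on three inputs:
\begin{itemize}
\item Lemma~\ref{lem:modify}: a Montesinos move adds a $T(2,n)$ summand, via the band cobordism of Figure~\ref{fig:cob};
\item the fact that $T(2,3)$ generates $Cob_{D_3}(S^3)$;
\item Lemma~\ref{lem:montinv}.
\end{itemize}
The paper never invokes Theorem~\ref{thm:C} here; it argues directly with the invariant. Your ``direct route'' is essentially that argument, but you leave the key move unspecified. The paper arranges the characteristic knot $\beta$ to cross a nontrivially colored band $b$ of $S$ exactly once. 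It then performs two $M_3$-moves on $b$, i.e.\ six half-twists, so $S$ stays orientable. This changes $lk(\beta,\beta^+)$ by $\pm 3$, hence $\kappa_3$ by $\pm 2$. Iterating reaches $\kappa_3=0$, and injectivity (Proposition~\ref{prop:inj}) gives the extension over $D^4$.

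The direct argument avoids the cobordism picture of Lemma~\ref{lem:modify} and the identification of a generator, and it makes the step size explicit. Your route is shorter once Theorem~\ref{thm:C} is available. However, it inherits a point you should make explicit. The proof of Lemma~\ref{lem:montinv} is phrased for the total space of the regular cover, whereas you need the irregular $3$-fold total space $Y$ to be unchanged. This does hold, by the same proof: the twisting ball lifts to a single $3$-ball $\widetilde D$ in the irregular cover. The lifted gluing map is then a diffeomorphism of $\partial\widetilde D\cong S^2$, which extends over $\widetilde D$. The paper's own proof of the corollary uses this fact implicitly.

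There are two slips.
\begin{itemize}
\item In your direct route, $\kappa_3$ cannot change by a unit of $\Z_6$, since its image is $\{0,2,4\}$. The relevant step is $\pm 2$, which generates that image.
\item Your aside on disconnected $Y$ is wrong. A boundary connected sum has boundary $Y_1\# Y_2$, not $Y_1\sqcup Y_2$. Moreover, the boundary of a $3$-fold cover of $B^4$ is a $3$-fold cover of $S^3$, so it has at most three components. The corollary should be read for connected $Y$, which the Hilden--Montesinos input already requires.
\end{itemize}
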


\subsection{Paper organization}
In Section~\ref{sec:preliminaries}, we provide background and basic definitions. Section~\ref{sec:char} recalls the definition of Cappell-Shaneson characteristic knots and introduces the generalization we need. In Section~\ref{sec:extensions} we discuss covers which represent the trivial class in the cobordism group, recalling a result of \cite{KO}. The invariant $\kappa_n$ is defined in Section~\ref{sec:meat} and the proof of the main theorem is contained in Section~\ref{sec:proof}. Theorem \ref{thm:C} and Corollary \ref{cor:4-ball-covered} are proved in Section \ref{sec:mont}.

\section{Preliminaries}\label{sec:preliminaries}
We recall some standard definitions and classical results which we will rely on later. We also give a formal definition of the oriented and non-orientable cobordism groups.  
\subsection{Branched covers}

The following is the standard definition of a branched cover in the smooth category. 
\begin{definition} Let $M$ and $N$ be smooth compact $n$-manifolds and $d \geq 1$ an integer. A smooth map
\[p \colon M \longrightarrow N\] is called a $d$-fold \textit{branched cover} if there exists a properly embedded smooth $(n-2)$-submanifold $B_p \subset N$ with the following properties: 
\begin{itemize}
    \item[(a)] the restriction of $p$ to $p^{-1}(N \setminus B_p)$ is an ordinary (unbranched) $d$-fold covering map;
    \item[(b)] in a neighbourhood of every $x \in p^{-1}(B_p)$, $p$ is smoothly equivalent to the map
    \[ \mathbb{R}^{n-2} \times \mathbb{C} \longrightarrow \mathbb{R}^{n-2} \times \mathbb{C}\]
    \[(t,z)\mapsto (t, z^k)\]
    for some positive integer $k \leq d$, called the \textit{local branching index} of $p$ at $x$.
\end{itemize} 

The integer $d$ is called the \textit{degree} of the cover, while the submanifold $B_p\subset N$ is the \textit{branch set} of $p$. The \textit{monodromy} of $p$ is the representation
\[\omega_p \colon \pi_1(N \setminus B_p) \longrightarrow S_d\]
associated to the ordinary cover $p|_{p^{-1}(N \setminus B_p)}$, where $S_d$ denotes the permutation group of $d$ elements. One may also replace $S_d$ by a subgroup containing the image of $\omega_p$ and still refer to the map as the monodromy of $p$.
\end{definition}
Sometimes, it will be convenient to work with \textit{equivalence classes} of branched covers. We say that two branched covers $p_0$ and $p_1$ are \textit{equivalent} if and only if there exists a commutative diagram

\begin{center}
\begin{tikzcd}
M_0 \arrow[d, "p_0"'] \arrow[rr, "\widetilde \varphi"] &  & M_1 \arrow[d, "p_1"] \\
N_0 \arrow[rr, "\varphi"]                              &  & N_1                 
\end{tikzcd}
\end{center}
where $\widetilde \varphi$ and $\varphi$ are diffeomorphisms. 

In this paper we focus on dihedral branched covers. Throughout, we denote the dihedral group of order $2n$ by $D_n$ and we fix the following notation for its presentation: 
\begin{equation}\label{eq:Dn}
D_n = \langle x,y \mid x^2=1,\ y^n=1,\ xyx = y^{-1} \rangle.
\end{equation}
We will refer to $x$ and $y$ as the preferred reflection and the generating rotation, respectively. We also fix, once and for all, the subgroup \[H:=\langle x \rangle \leq D_n\] generated by the preferred reflection.

\begin{definition}[Irregular dihedral branched cover]\label{def:dihedralcover}
    Let $M,N$ be two compact oriented smooth $m$-manifolds. A branched cover $p \colon M \rightarrow N$ with branch set a smoothly embedded codimension-$2$ submanifold $B_p \subset N$ is called an \textit{$n$-fold irregular dihedral branched cover} if its monodromy representation 
    \[
    \omega_p \colon \pi_1(N \setminus B_p)\longrightarrow D_n
    \]  
    sends every meridian of $B_p$ to a reflection in $D_n$, and the restriction of $p$ over $N \setminus B_p$ is the (possibly disconnected) $n$-fold cover with fiber $D_n/H$ associated, under the classification of ordinary covers, to the action of $\pi_1(N\setminus B_p)$ on $D_n/H$ through $\omega_p$. When $\omega_p$ is surjective this cover is connected and corresponds to the subgroup $\omega_p^{-1}(H)$.
    \end{definition}

For our purposes, it is not possible to require $\omega_p$ to be surjective. Non-surjectivity is needed for defining the addition operation in the cobordism groups and for the existence of an identity element. 

The invariants computed in this paper are sensitive to the monodromy homomorphism $\omega_p$ and are not determined by the covering map $p$ alone. Therefore, we adopt the following convention.

\begin{definition}[$D_n$-cover]\label{def:Dncover}
A \textit{$D_n$-cover} of $N$ with branch set $B \subset N$ is an inner-automorphism class of homomorphisms
\[\omega \colon \pi_1(N \setminus B)\longrightarrow D_n\]
sending every meridian of $B$ to a reflection. Equivalently, a $D_n$-cover is a regular dihedral branched cover $\widetilde p \colon \widetilde M \rightarrow N$ together with a covering action of $D_n$ on $\widetilde M$, considered up to $D_n$-equivariant isomorphism.  When $\omega$ is surjective, it is the connected cover determined by the kernel of $\omega$. The associated irregular cover of a $D_n$-cover, with respect to a fixed conjugacy class of reflection subgroup $H\cong\Z_2$, is the $n$-fold cover of Definition~\ref{def:dihedralcover}, that is, the branched completion of the cover with fiber $D_n/H$ induced by $\omega$.
\end{definition}

\begin{remark}\label{rmk:covesvsmonodromy}
If one remembers only the map $p$, its monodromy is identified with $D_n$ only up to $Aut(D_n)$. The automorphism $x \mapsto x$, $y \mapsto y^k, k\in (\Z_n)^x$, which is outer for $k\neq\pm 1$, preserves $H$ and hence the map $p$; but it replaces the characteristic link $\beta$ of Theorem \ref{thm:recovery-of-dihedral-homs} by $k\beta$ and hence $\kappa_n$ by $k^2\kappa_n$. Already for the torus link $L = T(2,5)$ and $n = 5$ the colorings determined by $\beta_5$ (of Example \ref{ex:generator}) and $2\beta_5$ induce the same covering map, but the corresponding $\kappa_5$ invariants take the distinct values 6 and 4 in $\Z_{10}$. Thus, $\kappa_n$ is an invariant of the pair $(p,\omega)$, not of $p$ alone. 
\end{remark}

When $n$ is odd, all reflections in $D_n$ are conjugate and fixing $H$ is merely a notational convenience. For $n$ even, there are two irregular dihedral covers for each $D_n$-cover. They are associated to the two conjugacy classes of reflections in $D_n$. Fixing $H$ globally, instead of recording a conjugacy class of reflections as part of the data for each cover, makes the addition operation in the cobordism group well-defined (see Definition~\ref{def:orientedcob}).

We give a geometric description of the covers we study. A $D_n$-cover has local branching index $2$ at each of the $n$ points in the pre-image of a point $b$ on the branch set $B_p$. The action of the rotation subgroup in $D_n$ permutes these $n$ points.

In order to describe the local degrees at $p^{-1}(b)$ in an irregular dihedral cover, we have to consider several cases. When $n$ is odd, all irregular dihedral covers associated to a fixed monodromy $\omega \colon \pi_1(S^3\backslash L)\to D_n$ are equivalent. When $\omega$ is surjective, for every $b\in B_p,$ the pre-image $p^{-1}(b)$ in the irregular dihedral cover consists of one point of branching index $1$ and $\frac{n-1}{2}$ points of branching index $2$.

When $n$ is even, as we already discussed, a surjective monodromy $\omega\colon \pi_1(S^3\backslash L)\to D_n$ induces two equivalence classes of irregular dihedral covers, corresponding to the two conjugacy classes of $\Z_2$ subgroups in $D_n$. Each equivalence class of irregular cover is a $\Z_2$ quotient of the regular dihedral cover induced by $\omega$ under the action of the relevant $\Z_2$ subgroup. If this action restricted to $\widetilde p^{-1}(b)$ has no fixed points, then $p^{-1}(b)$ consists of $\frac{n}{2}$ points of index 2. Alternatively, the $\Z_2$ on $\widetilde p^{-1}(b)$ has 2 fixed points, in which case  $p^{-1}(b)$ consists of 2 points of branching index 1 and $\frac{n-2}{2}$ points of branching index 2. We note that both cases can occur (for different choices of $b$) in the same irregular dihedral cover since meridians of distinct components of $L$ may be mapped to non-conjugate reflections.

Lastly, for a non-surjective dihedral representation $\omega$ of a link group, since we require that all meridians are sent to reflections, the image is isomorphic to $D_m$ for some divisor $m$ of $n$ with $1 \leq m < n$. The case $D_1\cong\mathbb Z_2$ corresponds to the trivial dihedral homomorphism (sending each meridian to the same reflection), which has image $\Z_2$. Thus, when $\omega$ is not surjective, one obtains disconnected (regular or irregular) dihedral covers whose connected components correspond to the orbits of $\im \omega$ acting on $D_n$ (resp. on $D_n/H$). The regular cover consists of $n/m$ copies of the regular $D_m$-cover. The following example illustrates what the connected components of the irregular dihedral cover may look like.

\begin{example}
    Consider the case where $\omega\colon \pi_1(S^3\backslash L)\to D_8$ has image isomorphic to $D_4$. In this case, the regular $16$-fold $D_8$ cover induced by $\omega$ consists of two copies of the regular $D_4$ cover. An appropriate choice of $\Z_2\subset D_8$ acts by identifying the two components. In this case, the irregular $D_8$ cover is diffeomorphic to the regular $D_4$ cover. The second equivalence class of a $\Z_2\subset D_8$ subgroup results in a disconnected irregular $D_8$ cover which is diffeomorphic to the disjoint union of two irregular $D_4$ covers, one from each equivalence class. 
\end{example}

Our main objects of study are introduced next.

\begin{definition}[Oriented dihedral cobordism group]\label{def:orientedcob}
Let $n \geq 1$ be an integer. The \textit{oriented cobordism group of dihedral $n$-fold branched covers of $S^3$} is
\[Cob_{D_n}(S^3)
=
\left\{
p \colon Y\to S^3
\;\middle|\;
\substack{
p \text{ is a $D_n$-cover of $S^3$}\\
\text{branched over an oriented link}
}
\right\}\big/\!\sim.
\]

where $p_0\sim p_1$ if and only if there exists a $D_n$-cover \[q \colon W \darrow{n} S^3 \times I\] branched over a smoothly embedded oriented surface $C$, such that the restriction of $q$ over $S^3\times \{0\}$ (resp. $S^3 \times \{1\}$) equals $p_0$ (resp. $p_1$), and such that $\partial C = -B_{p_0} \sqcup B_{p_1}$ as oriented links.  

\begin{figure}
    \centering
    \includegraphics[width=0.55\linewidth]{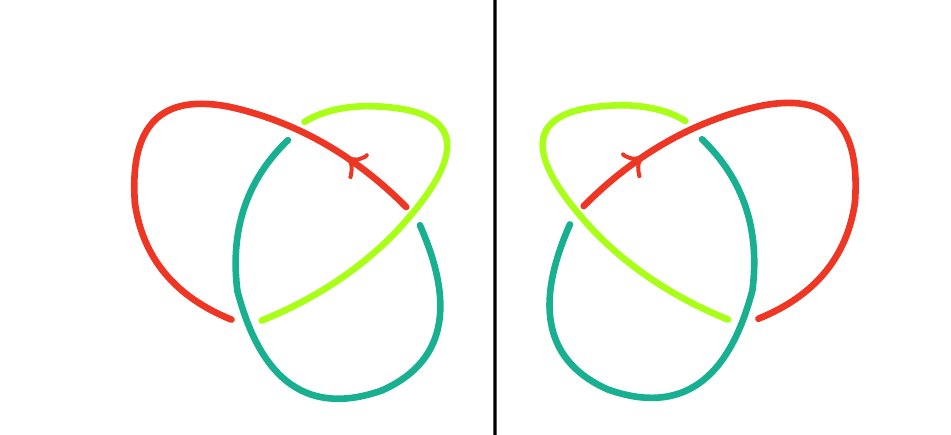}
    \caption{A Fox colored trefoil knot and its Fox colored mirror. By Proposition~\ref{prop:sumdef}, the two $D_n$-covers they determine represent inverse classes in $Cob_{D_n}(S^3)$.}
    \label{fig:mirror}
\end{figure}

We endow $Cob_{D_n}(S^3)$ with an abelian group structure as follows. Given $[p],[q]\in Cob_{D_n}(S^3)$, we set
\[[p]+[q]=[p\sqcup q]\]
where $p\sqcup q$ denotes the $n$-fold cover of $S^3$ branched over the split union $B_p \sqcup B_q$ of the Fox colored branch sets of $p$ and $q$. This induces a well-defined abelian group structure on $Cob_{D_n}(S^3)$, see Proposition \ref{prop:sumdef}.
\end{definition}

We emphasize the fact that in Definition \ref{def:orientedcob}, we require that the restriction of the orientation of the branch set $B_q$ to the boundary agrees with the reverse of the orientation of $B_{p_0}$ and with the orientation of $B_{p_1}$, respectively; this is the usual convention for an oriented link cobordism, and is forced by the fact that an oriented surface in $S^3 \times I$ induces opposite boundary orientations at the two ends. This requirement is waived in the non-oriented version of the dihedral cobordism group, defined next.

\begin{definition}[non-orientable dihedral cobordism group]\label{def:unorcob}
    Let $n \geq 1$ be an integer. The \textit{non-orientable cobordism group of dihedral $n$-fold branched covers of $S^3$} is
\[Cob^{\leftrightarrow}_{D_n}(S^3)
=
\left\{
p \colon Y\to S^3
\;\middle|\;
\substack{
p \text{ is a $D_n$-cover of $S^3$}\\
\text{branched over an unoriented link}
}
\right\}\big/\!\sim.
\]

where $p_0\sim p_1$ if and only if there exists a $D_n$-cover \[q \colon W \darrow{n} S^3 \times I\] branched over a smoothly embedded surface, such that the restriction of $q$ over $S^3\times \{0\}$ (resp. $S^3 \times \{1\}$) equals $p_0$ (resp. $p_1$). 

We endow $Cob^{\leftrightarrow}_{D_n}(S^3)$ with an abelian group structure, in a manner identical to that in Definition \ref{def:orientedcob}, see Proposition \ref{prop:sumdef}.  
\end{definition}

In the following Proposition, we show that the abelian group structures on $Cob_{D_n}(S^3)$ and $Cob^{\leftrightarrow}_{D_n}(S^3)$ are well-defined.

\begin{proposition}\label{prop:sumdef}
The split union of Fox colored branch sets descends to a well-defined abelian group operation on $Cob_{D_n}(S^3)$ and on $Cob^{\leftrightarrow}_{D_n}(S^3)$.
\end{proposition}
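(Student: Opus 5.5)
We need to check four things: the operation is well defined on cobordism classes, it is associative and commutative, there is an identity, and every class has an inverse. We argue with the monodromy description of Definition~\ref{def:Dncover}: a $D_n$-cover is a conjugacy class of homomorphisms $\omega\colon\pi_1(S^3\setminus L)\to D_n$ sending meridians to reflections. For a split union $L_1\sqcup L_2$, the complement group is the free product $\pi_1(S^3\setminus L_1)*\pi_1(S^3\setminus L_2)$ once a basepoint and a separating sphere are chosen. So $\omega_1*\omega_2$ is a $D_n$-cover of the split union.

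\textbf{Independence of choices.} The pair $(\omega_1,\omega_2)$ is only defined up to separate conjugation, so one might worry that $\omega_1*\omega_2$ depends on the representatives. It does not, up to cobordism (in fact up to equivalence), for the following reason. Conjugating $\omega_2$ alone by $g\in D_n$ corresponds to dragging the basepoint arc from the basepoint to the $L_2$ side around a loop whose image is $g$. This can be realized by an isotopy of $S^3$ that moves $L_2$ around inside the complement of $L_1$, or equivalently by changing the connecting arc. The whole split union is therefore well defined up to isotopy together with the induced monodromy, hence up to equivalence. Since the split union of links in $S^3$ is well defined up to isotopy, and isotopic colored links give product cobordisms, the resulting class is independent of choices.

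\textbf{Compatibility with cobordism.} Suppose $q\colon W\to S^3\times I$ is a cobordism from $p_0$ to $p_1$ branched over $C$, and $r$ is any cover branched over $L'$. Isotope $C$ into one half $B^3\times I$ and place $L'\times I$ in the complementary half. The monodromy on the complement is again a free product, because van Kampen applies to the decomposition along the sphere times $I$, whose complement is simply connected. The resulting cover is branched over $C\sqcup L'\times I$ and gives $p_0\sqcup r\sim p_1\sqcup r$. Orientations are preserved, which covers the oriented case.

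\textbf{Group axioms.} Associativity and commutativity follow because split unions are associative and commutative up to isotopy carrying the colorings. The identity is the cover with empty branch set. This is the trivial homomorphism from $\pi_1(S^3)$, which is why non-surjective monodromies must be allowed. Any class represented by an unknot with a single reflection coloring is also trivial, since it bounds a disk with the same monodromy on the disk complement.

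\textbf{Inverses.} This is the main step. Given $(L,\omega)$, let $-L$ be the mirror image of $L$ with reversed orientation, colored by the mirrored monodromy. Consider $(S^3\times I,\, L\times I)$ and bend it, i.e.\ view the band $L\times I\subset (B^3\times I)$ as a surface in $B^4\setminus$ a ball. Concretely, the boundary of $B^3\times I$ is $B^3\cup (S^2\times I)\cup B^3$, which gives $L\sqcup -L$ in $S^3$. The inclusion of the boundary complement into $(B^3\times I)\setminus(L\times I)\simeq B^3\setminus L$ lets the monodromy $\omega$ extend, and it restricts on the two copies to $\omega$ and its mirror. This gives a $D_n$-cover of $B^4$ bounding $p\sqcup\bar p$. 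Removing a small ball away from the surface turns it into a cobordism to the empty cover in $S^3\times I$. The orientation check (the surface $L\times I$ induces $L$ and $-\overline{L}$, i.e.\ the reversed mirror, on the two ends) is the point to verify carefully. For the unoriented group, orientations are ignored altogether.
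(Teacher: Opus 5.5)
Your compatibility-with-cobordism step, your identity element, and your inverse construction (bending $L\times I\subset B^3\times I$, which is essentially the paper's argument) are fine. The gap is in the independence-of-choices step, and that step is the main content of the paper's proof. You assert that replacing $\omega_2$ by $c_g\circ\omega_2$, for arbitrary $g\in D_n$, can be realized by an isotopy dragging $L_2$ around a loop in $S^3\setminus L_1$, or by changing the connecting arc. From this you conclude that the split union is well defined even up to equivalence. But dragging $L_2$ around a loop $\gamma$ conjugates $\omega_2$ only by $\omega_1(\gamma)^{\pm1}$. Changing the arc only conjugates by elements of $\mathrm{im}(\omega_1*\omega_2)$. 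A general $g$ is not of this form.

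Concretely, for $n\geq 3$ let both summands be an unknot colored by a single reflection. The representative pairs $(\tau,\tau)$ and $(\tau, y\tau y^{-1})=(\tau,\tau y^{-2})$ describe the same two $D_n$-covers. However, the resulting colorings of the $2$-component unlink have images $\langle\tau\rangle\cong\Z_2$ and $\langle\tau,y^2\rangle$, which have different orders. So these split unions are inequivalent $D_n$-covers and no isotopy relates them. Your claim ``in fact up to equivalence'' is therefore false, and the argument as written does not give independence up to cobordism either.

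The missing idea is that the change of representative must be realized by an honest cobordism, which temporarily enlarges the image of the monodromy. The paper proceeds in three steps:
\begin{enumerate}
\item It births a split unknot $U$ colored by a reflection $\tau$ (a colored $0$-handle).
\item It slides the ball containing $L$ under and then over $U$. This ambient isotopy conjugates the coloring of $L$ by $\tau$, which is now available because $\tau$ is the color of $U$.
\item It caps $U$ off with a colored $2$-handle.
\end{enumerate}
Conjugation by a rotation is a composite of two conjugations by reflections. Hence $(L,\rho)$ and $(L,c\circ\rho)$ are cobordant for every inner automorphism $c$. Juxtaposing this cobordism with the product cobordism on the other summand, exactly as in your compatibility step, then gives well-definedness. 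You already observe that a split unknot with one reflection color bounds a colored disk. Inserting such an unknot and removing it after the slide is precisely what repairs your argument.
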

\begin{proof}
We show that the operation is well defined. With this purpose in mind, we prove that, given a link $L\subset S^3$ and two representations \[\rho, \rho' \colon \pi_1(E_L) \longrightarrow D_n\]
that differ by post composition with an inner automorphism of $D_n$, there is a cobordism between the $D_n$-covers $(L, \rho)$ and $(L, \rho')$. As a consequence, we get that, if $(L_1, \rho_1)=(L_1, \rho_1')$ as $D_n$-covers, then the split union of the Fox colored links $(L_1, \rho_1)$ and $(L_2, \rho_2)$ is cobordant to the split union of  $(L_1, \rho_1')$ and $(L_2, \rho_2)$, where a cobordism can be obtained by putting together a cobordism between $(L_1, \rho_1)$ and $(L_1, \rho_1')$ with the identity cobordism of $(L_2, \rho_2)$ (and analogously for the second summand). 

\begin{figure}
    \centering
     \begin{overpic}[width=0.55\linewidth]{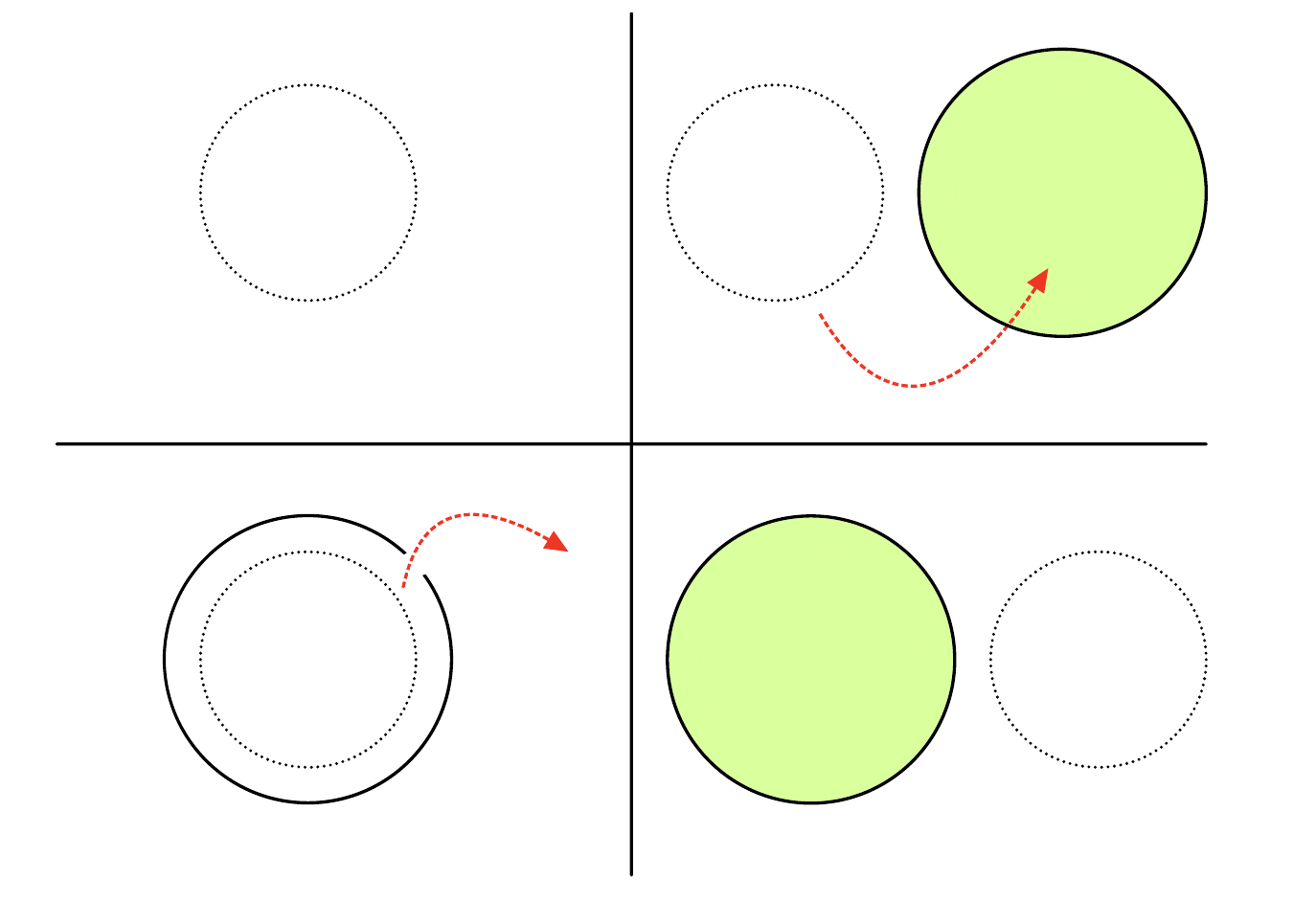}
        \put(52.8,52){\Large $(L, \rho)$}
        \put(16.6,52){\Large $(L, \rho)$}
        \put(16.5,16.5){\Large $(L, \rho')$}
        \put(76.3,16.5){\Large $(L, \rho')$}
        \put(10,23){$\tau$}
        \put(49,23){$\tau$}
        \put(68,59){$\tau$}
    \end{overpic}
    
    \caption{A cobordism between $(L, \rho)$ and $(L, \rho')$. Top left: the colored link $L$ before the $0$-handle appears. Top right: $L\cup U \subset S^3\times\{1\}$. We slide $L$ under $U$; the result is seen bottom left. We have conjugated the coloring of $L$ by $\tau$. Bottom right: the result of sliding $L$ over $U$.}
    \label{fig:cobordism}
\end{figure}

Since conjugation by a rotation in $D_n$ can be always expressed as the composition of two conjugation by reflections, it is enough to prove the claim for the case when $\rho=c \circ \rho'$ for an inner automorphism $c$ induced by conjugating by a reflection $\tau \in D_n$. In this case, a cobordism from $(L, \rho)$ to $(L, \rho')$ can be constructed as follows. Start with the Fox colored link $(L, \rho)$ and consider an elementary cobordism given by a $2$-dimensional $0$-handle $h^0_2$ embedded in a ball in $S^3 \times [0,1]$ split from $L$. We color all meridians of $h^0_2$ by $\tau$. We may assume that $\partial h^0_2$ is embedded in $S^3\times \{1\}$, producing a split union between $(L, \rho)$ and an unknot $U$ labeled by $\tau$, see Figure~\ref{fig:cobordism}. Now, perform an ambient isotopy that slides a $3$-ball containing $L$ first under, then over $U$ as depicted in Figure~\ref{fig:cobordism}.  If we describe $\rho$ by means of its value on a Wirtinger system of generators for $\pi_1(E_{L \sqcup U})$, we have that the Fox coloring of $L$ is changed by conjugation with $\tau$. At this point, we cap off $U$ via a colored $2$-handle cobordism, where the interior of the $2$-handle involved, $h^2_2$, is contained in $S^3\times(1, 2]$ and $h^2_2$ is isotopic rel. boundary to the green shaded $2$-disk at the bottom right of Figure \ref{fig:cobordism} via an isotopy supported in a 4-ball disjoint from $L$. 

The operation of split union is hence well defined on equivalence classes of $D_n$-covers. Commutativity and associativity clearly hold. The trivial $D_n$-cover (induced by a monodromy with image $\Z_2$) extends over any orientable surface in $D^4$ bounding a link $L\subset S^3$, so its class is the identity element. Finally, given any link $L$ equipped with a $D_n$-cover, this cover extends over $L\times [0,1]\subset S^3\times [0,1]$, so the inverse of $L$ is the mirror of $L$ with the orientation and Fox coloring induced by the mirroring operation; see Figure~\ref{fig:mirror} for an example.
\end{proof}

\subsection{Spanning surfaces and linking pairings}
Let $L\subset S^3$ be an oriented link and $S$ an oriented Seifert surface for $L$.
\begin{definition}[Seifert pairing]\label{def:Seifertpairing} The \textit{Seifert pairing} of $S$ is the bilinear form
\[\mathcal{V} \colon H_1(S;\Z) \times H_1(S;\Z) \longrightarrow \Z\]
    \[([a],[b]) \mapsto lk(a, b^+),\]
    where $a, b \subset S$ are simple closed curves (not necessarily connected) representing $[a]$ and $[b]$, respectively, and $b^+\subset S^3 \setminus S$ denotes a push-off of $b$ along the positive normal direction of $S$. We denote by $V$ a matrix representing the Seifert pairing $\mathcal{V}$ with respect to a fixed basis of $H_1(S;\Z)$.
    
    The matrix
    \[A=V +V^t\]
    is called a \textit{symmetrized Seifert matrix for $S$}.
\end{definition}
The following example will be of much use in this paper.

\begin{example}[$(2,n)$- torus links]\label{ex:torustake1}

For every integer $n \geq 1$, we denote by $T(2,n)$ the $(2,n)$-torus link. Let $S(2,n)$ be the standard oriented Seifert surface for $T(2,n)$ and let $\alpha_1, \dots, \alpha_{n-1}\subset S(2,n)$ be the basis of $H_1(S;\Z)$ induced by the simple loops $a_1, \dots, a_{n-1}\subset S(2,n)$ depicted in Figure \ref{fig:torus}. One easily checks that the associated Seifert matrix is the $(n-1)\times (n-1)$ matrix 
    \begin{equation*} V_n=
        \begin{pmatrix}
-1 & 1 & 0 & 0 & \dots & 0 \\
0 & -1 & 1 & 0 & \dots & 0 \\
\vdots & \ddots & \ddots & \ddots & \ddots & \vdots \\
0 & \dots & \dots & \dots  & 0 & -1
\end{pmatrix}.
    \end{equation*}
    
\end{example}

\begin{figure}
    \centering
     \begin{overpic}[width=0.4\linewidth]{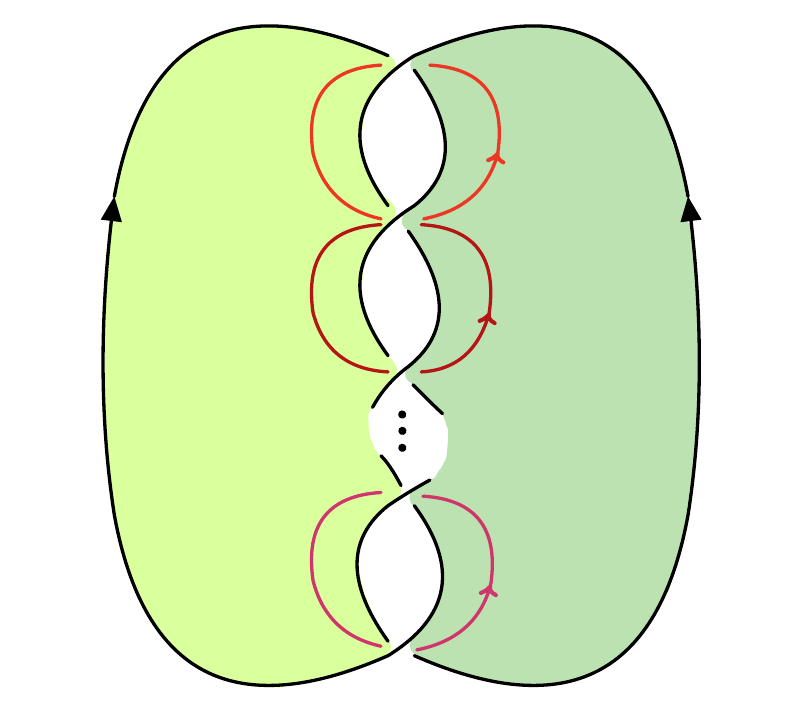}
        \put(65,70){\textcolor{red}{\large $a_1$}}
        \put(65,50){\textcolor{red}{\large $a_2$}}
        \put(65,15){\textcolor{red}{\large $a_{n-1}$}}
    \end{overpic}

    \caption{The torus link $T(2,n)$ with Seifert surface $S(2,n)$. With respect to the displayed basis for $H_1(S(2,n); \Z)$, the characteristic knot $\beta_n$ represents the class $a_1+2a_2+...+ (n-1)a_{n-1}$.}
    \label{fig:torus}
\end{figure}

When $S$ is a possibly non-orientable spanning surface for $L \subset S^3$, we can still define a pairing on $H_1(S;\Z)$ per \cite[Section 2]{GL}, as we now recall. Let $L\subset S^3$ be a link and $S \subset S^3$ a connected spanning surface for $L$. Denote by $\widetilde S \subset S^3 \setminus S$ the total space of the $S^0$-bundle over $S$ induced by the $I$-bundle that is the tubular neighbourhood of $S$ in $S^3$. Let $\tau \colon \widetilde S \rightarrow S$ be the associated bundle map.
\begin{definition}[Gordon--Litherland pairing]\label{def:gordonlitherland}
 The \textit{Gordon--Litherland pairing} on $S$ is the bilinear form
 \[\mathcal{U} \colon H_1(S;\Z) \times H_1(S;\Z) \longrightarrow \Z \]
 \[([a],[b]) \mapsto lk(a, \tau^{-1}(b)),\]
 where $a, b \subset S$ are simple closed curves representing $[a]$ and $[b]$ respectively. After fixing a basis of $H_1(S;\Z)$, we shall denote by $U$ a matrix representing the Gordon--Litherland pairing $\mathcal{U}$ with respect to this basis.
\end{definition}

\begin{remark}\label{rmk:glandseifert}
    It is straightforward to check that $\mathcal{U}$ is symmetric and, when $S$ is orientable,
    \[[a]^t \, A \,  [b]=[a]^t \, U \,  [b]\]
    for every $[a],[b]\in H_1(S;\Z)$, where, $A=V+V^t$ denotes the symmetrized Seifert matrix on $S$. More generally, for $S$ non-orientable and $a$ an orientation-preserving loop in $S$, the pre-image $\tau^{-1}(a)$ consists of the union of the positive and negative push-offs of $a$ with respect to the (trivial) $I$-bundle on a tubular neighborhood of $a$ in $S$. In other words, for $a, b$ two orientation-preserving loops in $S$, the Gordon-Litherland pairing agrees with the Seifert pairing on an orientable subsurface of $S$ containing $a$ and $b$.
\end{remark}

We conclude this subsection by stating a fact about spanning surfaces which will be useful later. Recall that, given an embedded surface $S \subset S^3$, a \textit{tubing} on $S$ is the result of an embedded surgery on $S$. That is, we replace two disjoint closed disks in the interior of $S$, i.e. an embedded $D^2\times S^0$ in $S^\circ$, by a tube $S^1\times D^1$ whose interior is embedded in $S^3\backslash S$. When the surgered $D^2 \times S^0$ is contained in a single connected component of $S$, we will call this operation a \textit{self-tubing} or a \textit{stabilization} of $S$. We also remark that the added 1-handle may or may not be orientable.

\begin{lemma}\label{lem:stab}
    Let $S$ and $S'$ be orientable Seifert surfaces for a given link $L \subset S^3$. Then $S$ and $S'$ become ambiently isotopic after a finite number of stabilizations. Moreover, if $S$ and $S'$ admit orientations which are compatible on $L$, then they become ambiently isotopic after a finite number of oriented stabilizations. 
\end{lemma}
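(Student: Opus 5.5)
This is the classical stabilization theorem for Seifert surfaces; we plan to derive it from the four-dimensional picture, following the standard argument of Levine and Bar-Natan--Fulman--Kauffman. First we reduce to the oriented statement. If $S$ and $S'$ are orientable but their orientations do not agree on $L$, some components of $L$ receive opposite orientations from the two surfaces. For such a component $K$, we would tube $S'$ to itself along a small arc near $K$ (a non-orientable-looking self-tubing that nevertheless keeps the surface orientable). Combined with an isotopy sweeping a collar of $K$ around, this reverses the induced orientation on $K$. Thus, after finitely many stabilizations of $S'$ we may assume the boundary orientations match, and we are in the second case.

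In the oriented case, push the interiors of $S$ and $S'$ into $D^4$, keeping $L = \partial S = \partial S'$ in $S^3$. Alternatively, work in $S^3 \times I$ with $S \subset S^3\times\{0\}$ and $S'\subset S^3\times\{1\}$ glued along $L\times I$. Each Seifert surface is the fiber of a map $S^3\setminus \nu L \to S^1$ representing the class dual to the meridians, as fixed by the orientations. Since the orientations agree on $L$, both maps are homotopic to one another, because they represent the same element of $H^1(E_L;\Z)$. A homotopy $E_L\times I\to S^1$, made transverse to a point, gives an oriented $3$-manifold $M\subset E_L\times I$ with $\partial M = S\cup (\text{collar}) \cup S'$.

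Next take a Morse function on $M$ given by the $I$-coordinate and perturb it to be self-indexing, with critical points of index $0,1,2,3$. Standard rearrangement cancels the index $0$ and $3$ points against $1$- and $2$-handles, using the connectivity of $M$ relative to its ends; this may require adding trivial tubes to connect components. Passing a level set through an index-$1$ critical point of $M$ changes the surface by an embedded $1$-handle attachment, i.e.\ a tubing. An index-$2$ point is the inverse operation. Reordering the handles so that all index-$1$ handles come before all index-$2$ handles, the middle level $F$ is obtained from $S$ by tubings and from $S'$ by tubings. In between, the level sets move only by ambient isotopy in $S^3$. The $1$-handles are oriented because $M$ is oriented, so these are oriented stabilizations. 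Since $F$ is connected to $S$ through self-tubings once connectivity has been arranged, $S$ and $S'$ become ambiently isotopic after finitely many stabilizations.

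The main obstacle is to ensure that every tubing is a \emph{self}-tubing, with the disks lying in one component, and that handle rearrangement is compatible with embeddedness in $S^3$. To handle this, we would first make $S$ and $S'$ connected by tubing separate components together along arcs. We would also cancel index $0$ and $3$ critical points using the fact that $M$ can be chosen connected, by a tubing argument in $E_L\times I$.
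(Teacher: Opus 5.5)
\textbf{The reduction in your first paragraph fails, and no repair is possible.} A tubing is supported in the interior of the surface. So it does not change the framing that the surface induces on a component $K$ of $L$, namely the integer $lk(K,K')$, where $K'$ is a push-off of $K$ along the surface; this integer does not depend on how $K$ is oriented. If the tubed surface is connected and orientable, it also cannot change the induced orientation of $L$ except by a global sign. Ambient isotopies of $S^3$ carrying $L$ to itself preserve these framings as well (up to permuting components). On the other hand, a Seifert surface for $(L,o)$ induces on $K$ the framing $-lk_o(K,L\setminus K)$, and reversing only some components of $L$ can change this number. Concretely, the Hopf link is amphichiral as an unoriented link. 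Its two orientation classes have Seifert surfaces that are annuli with framing $-1$, respectively $+1$, on each component. No sequence of tubings (orientable or not) and isotopies can relate them. So the first sentence of the lemma, read literally for orientable surfaces inducing incompatible orientations, is false; this is exactly why Lemma~\ref{lem:spanstab} needs half-twisted bands. Read instead for two Seifert surfaces of the same oriented link, it is subsumed by the second sentence, and that oriented case is the only one the paper uses (in Lemma~\ref{lem:invariant}). You should drop the reduction rather than try to fix it.

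For compatibly oriented surfaces, your plan is the classical four-dimensional argument: build a $3$-manifold $M\subset E_L\times I$ from a homotopy of maps to $S^1$, then do Morse theory on the height function. Note that this is not what \cite{BFK} does. The paper simply cites that reference, whose point is an elementary proof avoiding this machinery. Your route is sound in outline, but two steps are asserted rather than proved.

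First, equality in $H^1(E_L;\Z)$ only gives a free homotopy. To get $\partial M=S\cup(\partial S\times I)\cup S'$, so that every level is bounded by $L$, you need a homotopy rel $\partial E_L$, after making $S$ and $S'$ agree near $\partial E_L$. For a link with $\mu\geq 2$ components there is an obstruction in $\ker\bigl(H^1(E_L,\partial E_L;\Z)\to H^1(E_L;\Z)\bigr)\cong\Z^{\mu-1}$. You kill it by first spinning $S'$ meridionally around the boundary tori, which is an ambient isotopy fixing $L$.

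Second, the Morse function is the restriction of the projection to $I$, so you cannot just perturb it to be self-indexing: reordering critical points means isotoping $M$ inside $E_L\times I$. This is the real content of the proof, and it does work:
\begin{enumerate}
\item carry the core arc of a $1$-handle down through product regions using the level isotopies;
\item make it disjoint from other cores by general position;
\item push it out of the slab between the two disks created by a lower $2$-handle (the slab's side is not part of the surface, so this is a finger move);
\item slide its endpoints off those disks.
\end{enumerate}
After that, discard closed components of $M$, and absorb sphere births followed by tubes by isotopy. With these two points written out, the oriented half is proved.
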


This is well known and we refer the reader to \cite{F} for references to its many proofs; an elementary proof is given in \cite[Theorem~1]{BFK}.

The analogous statement for possibly non-orientable spanning surfaces requires one more local move: taking the boundary connected sum with an unknotted half-twisted band, where the twist can be positive or negative. The isotopy class of the boundary link is unchanged under this operation. We call this operation a \textit{half-twisted band attachment}. 

\begin{lemma}\label{lem:spanstab}
Any two spanning surfaces for a link $L \subset S^3$ are related by a finite sequence of ambient isotopies, additions or removals of tubes, and additions or removals of half-twisted bands.
\end{lemma}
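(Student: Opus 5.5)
The strategy is to reduce to the orientable case, Lemma~\ref{lem:stab}. Let $S$ and $S'$ be spanning surfaces for $L$, possibly non-orientable and possibly disconnected. Each surface will be modified by half-twisted band attachments until it becomes orientable. Then Lemma~\ref{lem:stab} is applied to the two orientable representatives. Every move used is invertible: removing a tube or a half-twisted band undoes adding one. So it suffices to connect each of $S$ and $S'$ to some orientable Seifert surface of $L$.

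The key step is to show that a finite number of half-twisted band attachments and tubings turns any spanning surface $S$ into an orientable one. I would proceed as follows.

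First, make $S$ connected and place it in a standard position. Up to isotopy, $S$ is a disk-band surface: a union of disks $D_1,\dots,D_k$ joined by bands $b_1,\dots,b_m$, each band carrying some number of half twists. Disconnected pieces can be joined by tubes if needed.

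Next, choose a checkerboard-style assignment of sides ($\pm$) to the disks. A band is ``bad'' if its two ends, viewed through the chosen sides, force an orientation reversal. The surface is orientable exactly when the assignment can be chosen with no bad bands.

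Now fix a bad band $b$. Attach an unknotted half-twisted band near one end of $b$, boundary-connected-summed to $S$ along the arc where $b$ meets a disk. Slide the attached half-twisted piece along $\partial S$ into $b$. This changes the number of half twists of $b$ by one, and the boundary link is unchanged since the half-twisted band attachment is an isotopy of the boundary. The band-slide is an ambient isotopy of the surface, because a boundary-connected sum with an unknotted half-twisted band is a local move that can be transported along the boundary. After this, $b$ becomes good.

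Repeating this for every bad band yields an orientable surface.

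The main obstacle is justifying that the local half-twisted band can be migrated into an arbitrary band by isotopy rel.\ the boundary link type. If migration proves awkward, I would use a different route. Consider any one-sided simple closed curve $\gamma$ on $S$. Attaching a half-twisted band of the appropriate sign near a point of $\gamma$, then tubing, changes the framing so that one can cut along a Möbius band. Concretely, $S$ minus a Möbius band neighbourhood of $\gamma$, plus the band, is related to the original by a half-twist band followed by tube removals. This reduces $\dim H_1(S;\Z_2)$ on the non-orientable part.

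In either version, the induction is on the number of one-sided generators of $H_1(S;\Z_2)$. Once both surfaces are orientable, Lemma~\ref{lem:stab} gives that they become ambiently isotopic after stabilizations, which are tube additions. Concatenating the move sequences, and inverting the one for $S'$, finishes the proof.
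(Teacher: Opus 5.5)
The paper does not prove this lemma itself; it quotes Gordon--Litherland's Theorem~11 and Yasuhara's elementary proof. Your reduction to the orientable case has a genuine gap at its key step.

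\emph{Adding} half-twisted bands and tubes can never make a non-orientable surface orientable. A half-twisted band attachment contains the core of the new M\"obius band, which is one-sided. A tube is attached along two small disks that any given one-sided curve can be isotoped to avoid, so that curve survives.

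Your migration step is where this is hidden, and it is false. Transporting the unknotted M\"obius summand $M$ along $\partial S$ is indeed an isotopy, but it only carries a small kinked band attached to an edge of $b$; it never becomes an extra half-twist of $b$. In fact $\chi(S\natural M)=\chi(S)-1$, whereas retwisting $b$ preserves $\chi$ and in general changes $\partial S$. For example, let $M_3$ be the three-half-twisted M\"obius band spanning the trefoil. Giving its single band an even number of half-twists produces an annulus, whose boundary has two components.

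Your fallback does not repair this. Deleting a M\"obius band neighbourhood of $\gamma$ creates a new boundary circle, and the tube removals you invoke need compressing disks that you never construct. In general no removal move is available at all. $M_3$ is incompressible: its only two-sided essential curve is parallel to the trefoil, which bounds no disk. Nor is $M_3$ of the form $F_0\natural M$ with $M$ an unknotted M\"obius band, since $F_0$ would then be a disk bounded by the trefoil. Passing from such a surface to an orientable one requires first adding material in a controlled way and then finding compressions. That is exactly the content of the theorem, and an induction on one-sided generators supplies no mechanism for it.

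A workable proof has to be global. For example, use half-twisted bands only to make the two surfaces meet $\partial N(L)$ in the same slopes: each band changes the framing of a component by $\pm 2$, and the parity of that framing is the same for all spanning surfaces. Then obtain tubes from a generic cobordism in $E_L\times I$ between two surfaces with the same boundary. Both surfaces are dual to the class in $H^1(E_L;\Z_2)$ sending every meridian to $1$.

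Finally, even with orientable representatives in hand, your last step fails when they induce inequivalent orientations of $L$. Tubings preserve the signature of the Gordon--Litherland form, and the two orientable annuli spanning the Hopf link have forms $[2]$ and $[-2]$. So the unqualified first sentence of Lemma~\ref{lem:stab} cannot be used here. Switching between orientations is precisely where half-twisted bands are indispensable.
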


This is due to Gordon and Litherland \cite[Theorem~11]{GL}; an elementary proof is given in \cite{Y}. In contrast with Lemma~\ref{lem:stab}, the intermediate surfaces may be non-orientable or disconnected, and removals as well as additions are allowed.

\subsection{HNN presentations for link groups}
The following lemma will be needed for the proof of the main theorem in Section \ref{sec:char}. This is a standard fact, but we include a proof because we did not locate a reference.

\begin{lemma}\label{lem:hnn}
    Let $L \subset S^3$ be an oriented link and $S \subset S^3$ a connected oriented Seifert surface for $L$. Denote by $E_L$ and $E_S$ the exteriors of $L$ and $S$ in $S^3$, respectively. Then $\pi_1(E_L)$ admits the HNN presentation
    \begin{equation}\label{eq:hnn}
    \pi_1(E_L)\cong \pi_1(E_S)* \Z \langle m \rangle  / \{mi_+(w) m^{-1}=i_-w \text{ for all } w \in \pi_1(S) \},
    \end{equation}
    where $m$, the generator of the infinite cyclic group, is a meridian of $L$. Here, $i_+$ and $i_-$ denote the positive and negative push-offs of $w\in \pi_1(S)$. In this presentation, the basepoint $x_0$ is contained in the interior of $S^+$, the positive pushoff of $S$ in $\partial(S\times I)$, and $w, w^-$ are connected to $x_0$ along $x_0\times I\subset S\times I$.
\end{lemma}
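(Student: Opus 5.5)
The plan is to apply Seifert--van Kampen to a decomposition of $E_L$ adapted to a bicollar of $S$. Let $N = S\times[-1,1]$ be a closed tubular neighbourhood of $S$ in $E_L$; more precisely, we use $S$ with a small collar near $L$ removed, so that $N$ is properly embedded. Set $E_S = \overline{E_L\setminus N}$, which deformation retracts onto the exterior of $S$. Then $E_L = E_S\cup N$, and the intersection $E_S\cap N = S^+\sqcup S^-$ has two components $S\times\{\pm1\}$. The intersection is disconnected, so the ordinary van Kampen theorem does not apply directly. Instead we use its groupoid version, or equivalently the standard ``HNN'' form of van Kampen for a space glued to itself along two copies of a subspace. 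Concretely, we view $E_L$ as obtained from $E_S$ by attaching $S\times[-1,1]$ along $S\times\{1\}\to S^+$ and $S\times\{-1\}\to S^-$.

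First, we set up basepoints as in the statement. The basepoint is $x_0\in S^+$, and the corresponding point $x_0^-\in S^-$ is joined to $x_0$ by the arc $x_0\times[-1,1]\subset N$. The loop formed by this arc, closed up by a path in $E_S$ from $x_0^-$ back to $x_0$, is a meridian $m$ of $L$. Such a closing path exists because $S$ is connected and $E_S$ is connected, the latter since $S$ does not separate $S^3$ when $\partial S\neq\emptyset$. We choose the path in $E_S$ running around $\partial S$ near a point of $L$, so that the resulting loop bounds a meridian disk.

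Next, we apply van Kampen to the cover $E_L = U\cup V$ with $U = E_S\cup (S\times[-1,-1/2))\cup(S\times(1/2,1])$ and $V = N$. The intersection $U\cap V$ is again two collars. To avoid the disconnectedness, we run the standard argument: take an infinite cyclic cover of $E_L$ built from copies of $E_S$ glued along $S^\pm$, or equivalently invoke the Seifert--van Kampen theorem for fundamental groupoids with the two-point base set $\{x_0,x_0^-\}$. Either route yields $\pi_1(E_L)\cong (\pi_1(E_S)*\langle m\rangle)/\langle\!\langle m\, i_+(w)\, m^{-1} = i_-(w)\rangle\!\rangle$. Here $i_\pm\colon\pi_1(S)\to\pi_1(E_S)$ are induced by the push-offs, and each push-off is based at $x_0$ via the arc $x_0\times I$, with the $S^-$ copy transported back along the closing path. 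The direction of the conjugation, $m\,i_+ m^{-1}$ versus $m^{-1} i_+ m$, is fixed by checking on a single loop $w$ using the orientation convention for $m$.

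The main obstacle is bookkeeping, not topology. We must verify that the extra generator produced by the groupoid/HNN van Kampen is precisely a meridian, and that the push-off maps $i_-$ are correctly based. For this we track the loop $x_0\times[-1,1]$ followed by the chosen return path, and check that it links $L$ once; the sign is set by the orientation of $S$ and $L$. Finally, we note that the $\pi_1(N)=\pi_1(S)$ factor is eliminated by identifying it with $i_+(\pi_1(S))$, which leaves exactly the relations $m\,i_+(w)\,m^{-1}=i_-(w)$ for $w\in\pi_1(S)$.
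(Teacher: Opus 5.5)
Your proof is correct, but it deals with the disconnected intersection $S^+\sqcup S^-$ differently from the paper.

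\textbf{The paper's route.} It uses a trick that needs only the classical Seifert--van Kampen theorem. It enlarges the bicollar to $\nu(S\cup\gamma)$, where $\gamma$ is a loop meeting $S$ once and containing $x_0\times I$. Then $E_S\cap\nu(S\cup\gamma)\simeq S^+\cup(\gamma\setminus\nu(S))\cup S^-$ is connected, with $\pi_1\cong\pi_1(S^+)*\pi_1(S^-)$. The stable letter $m$ appears as the free factor in $\pi_1(\nu(S\cup\gamma))\cong\pi_1(S)*\Z\langle m\rangle$. The HNN relation comes from the inclusion sending $w^-\mapsto m\,w^+m^{-1}$, after which the $\pi_1(S)$ factor is eliminated.

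\textbf{Your route.} You keep the bare decomposition $E_L=E_S\cup N$. You then invoke the groupoid van Kampen theorem with base set $\{x_0,x_0^-\}$, or equivalently the HNN form of van Kampen.

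\textbf{Comparison.} Your version is conceptually cleaner and avoids the auxiliary arc. However, it delegates the pushout computation to a cited theorem. The paper's argument is self-contained and is essentially the standard reduction of the HNN form to the classical theorem. Your infinite cyclic cover alternative also works, but needs noticeably more bookkeeping: a colimit over a bi-infinite chain of copies of $E_S$ glued along copies of $S$, followed by a semidirect product with the deck group.

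\textbf{A point in your favour.} You choose the closing path in $E_S$ to run around $L$, so that $m$ really is a based meridian, as the statement asserts. The paper takes $\gamma$ to be an arbitrary simple loop meeting $S$ once and leaves this point implicit. Strictly speaking, your based loop is conjugate to a meridian rather than bounding a meridian disk, but that is harmless.
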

\begin{proof}
    Let $\gamma\subset E_L$ be a simple loop intersecting $S$ transversely at one point in its interior and containing the line segment $x_0\times I$. Consider the decomposition
    \[E_L \cong E_S\cup \nu(S\cup \gamma),\]
    where $\nu(\cdot)$ denotes a closed tubular neighborhood. Since $E_S\cap \nu(S \cup \gamma)$ is connected, we can apply Seifert van--Kampen's Theorem and conclude that
\begin{equation}\label{eq:pres}
      \pi_1(E_L)\cong \pi_1(E_S)*\pi_1(\nu(S\cup \gamma))/\sim,
\end{equation}
    where $\sim$ is the inclusion-induced equivalence relation we will now describe. Denote by $S^+$ and $S^-$ the positive and negative push-offs of $S$, both contained in the boundary of $\nu(S)$. By assumption, the basepoint $x_0$ for all groups involved satisfies $x_0=S^+ \cap \gamma$. Let $m\in \pi_1(\nu(S)\cup \gamma))$ be the class of $\gamma$ endowed with the orientation going from $S^+$ to $S^-$ in the complement of $S$. Then there is an isomorphism
    \[\pi_1(\nu(S)\cup \gamma)\cong \pi_1(S) * \Z \langle m \rangle.\]
    On the other hand, we have that
    \[\pi_1(E_S\cap \nu(S\cup \gamma))\cong\pi_1(S^+ \cup (\gamma \setminus \nu(S))\cup S^-)\cong \pi_1(S^+)*\pi_1(S^-)\]
    and the inclusion-induced homomorphism  
    \[\pi_1(S^+)*\pi_1(S^-)\longrightarrow \pi_1(S^+)*\Z\langle m \rangle \cong \pi_1(S)*\Z\langle m \rangle\]
    restricts to the identity on the first free factor in the domain and sends $\gamma^-$ to $m\gamma^+m^{-1}$ on the second factor, where $\gamma$ is any loop in $S$ and $\gamma^\pm$ are its two push-offs. 
    The conclusion follows.
\end{proof}
For technical purposes, we will also need a higher dimensional analogue of Lemma \ref{lem:hnn}. We first recall the notion of Seifert solid for a properly embedded surface in $D^4$.

\begin{definition}\label{def:ssolid}
  Let $F\subset D^4$ be a properly embedded smooth oriented surface with non-empty boundary. A \textit{Seifert solid} for $F$ is a compact, connected, oriented $3$-manifold $H \subset D^4$ such that $\partial H=F \cup -S$, where $S\subset \partial D^4$ is a compatibly oriented Seifert surface for $F \cap \partial D^4$.
    
\end{definition}

\begin{lemma}\label{lem:hnnh}
  Let $F \subset D^4$ be a properly embedded oriented surface and $H \subset D^4$ a connected oriented Seifert solid for $F$. Denote by $E_F$ and $E_H$ the exteriors of $F$ and $H$ respectively. Then $\pi_1(E_F)$ admits the following HNN presentation
   \[\pi_1(E_F)\cong \pi_1(E_H)* \Z \langle m \rangle  / \{mi_+(w) m^{-1}=i_-w \text{ for all } w \in \pi_1(H)\},\]
     where $m$ is a generator of the infinite cyclic group and $i_+$ and $i_-$ respectively denote the positive and negative push-offs of $w\in \pi_1(H)$. 
\end{lemma}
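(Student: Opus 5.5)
The plan is to repeat the proof of Lemma~\ref{lem:hnn} one dimension higher. Van Kampen is applied to a decomposition of $E_F$ into the complement of the Seifert solid and a neighbourhood of the solid together with a dual arc.

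First I set up the decomposition. Since $H$ is oriented and properly embedded in $D^4$ (with $\partial H = F\cup -S$, where $S \subset \partial D^4$), it has a trivial normal bundle, so a closed neighbourhood of $H$ in $E_F$ has the form $\nu(H)\cong H\times I$. Its two boundary copies $H^\pm = H\times\{\pm1\}$ are the push-offs of $H$. I choose a basepoint $x_0$ in the interior of $H^+$. I also choose a simple loop $\gamma \subset E_F$ that meets $H$ transversely in exactly one interior point and contains the segment $x_0\times I$; this is possible because $H$ is connected. Then
\[E_F \cong E_H \cup \nu(H\cup\gamma),\]
where $E_H$ is the closure of the complement of $\nu(H)$ in $E_F$. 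Up to homotopy, $\nu(H\cup \gamma)$ is $H$ with a circle wedged on, so
\[\pi_1(\nu(H\cup\gamma)) \cong \pi_1(H) * \Z\langle m\rangle,\]
where $m$ is the class of $\gamma$, oriented from $H^+$ to $H^-$. The loop $\gamma$ is isotopic to a meridian of $F$: push it to the boundary of $H$ near $F$, where it becomes a small circle linking $F$ once.

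Next I compute the intersection and the gluing maps. The intersection $E_H\cap \nu(H\cup\gamma)$ is homotopy equivalent to $H^+\cup(\gamma\setminus \nu(H))\cup H^-$. This is connected, and its fundamental group is $\pi_1(H^+)*\pi_1(H^-)$. The inclusion into $\nu(H\cup\gamma)$ acts as the identity on the factor $\pi_1(H^+)\cong \pi_1(H)$, and sends $w^-$ to $m\,w^+ m^{-1}$ once basepoints are joined along $\gamma$, exactly as in Lemma~\ref{lem:hnn}. Into $E_H$ the two factors map by $i_+$ and $i_-$. Van Kampen then gives the stated presentation: the $\pi_1(H^+)$ factor identifies $\pi_1(H)$ with $i_+\pi_1(H)$, which leaves the single relation family $m\,i_+(w)\,m^{-1}=i_-(w)$.

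The main obstacle is checking that the pieces are as claimed near the corners of $D^4$. Specifically, $H$ meets $\partial D^4$ in $S$, and $\nu(H)$ must be a genuine product inside $E_F$, compatible with the boundary. I would handle this by taking $\nu(H)$ to be a collar-compatible product: near $S$, use the product structure of $\partial D^4\times[0,\epsilon)$, where $S$ has a product neighbourhood in $S^3$. Near $F$, the neighbourhood $\nu(H)$ is a wedge of angle less than $2\pi$ about $F$ inside $\nu(F)$, which makes $E_F\setminus \nu(H)$ deformation retract onto the complement of $H$. Since these identifications are only up to homotopy equivalence, van Kampen applies to slightly thickened open versions of the pieces, and the basepoint conventions are the same as in Lemma~\ref{lem:hnn}.
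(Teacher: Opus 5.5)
Your proposal is correct and is the argument the paper intends. The paper omits the proof as ``analogous to Lemma~\ref{lem:hnn}'', i.e.\ van Kampen applied to $E_F\cong E_H\cup\nu(H\cup\gamma)$ with intersection $H^+\cup(\gamma\setminus\nu(H))\cup H^-$, which is exactly what you carry out, with more attention to the corners than the paper gives. One minor caveat: a simple loop meeting $H$ once need not be conjugate to a meridian of $F$, since it can differ from one by an element of $\pi_1(E_H)$. So your remark that $\gamma$ is isotopic to a meridian holds only for a suitably chosen return arc, e.g.\ one running along $H^{\pm}$ and around $F$; the lemma as stated does not need this.
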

\begin{proof}
    The proof is analogous to the one of Lemma \ref{lem:hnn} and is hence omitted.
\end{proof}

\section{Characteristic knots and dihedral branched covers}\label{sec:char}

In what follows, let $K \subset S^3$ be an oriented knot, $S \subset S^3$ is a connected oriented Seifert surface for $K$, and $E_K$ and $E_S$ the exteriors of $K$ and $S$, respectively. Choose a basis for $H_1(S;\mathbb Z)$, and let $V$ be the corresponding Seifert matrix of $S$. Let $A = V+V^t$. Whenever $A$ is applied to a class in $H_1(S;\Z)$, the resulting class in  $H_1(E_S;\Z)$ will be written with respect to the  Alexander dual basis to the chosen basis of $H_1(S;\Z)$. With respect to these bases, the map $i_+ + i_- \colon H_1(S;\Z)\rightarrow H_1(E_S;\Z)$ is represented by the matrix $A=V+V^t$.

Cappell and Shaneson introduced \cite{CS} the following tool for studying dihedral quotients of knot groups.

\begin{definition}\label{def:charknot}
Let $n \geq 1$ be an odd integer. A \textit{Cappell--Shaneson mod $n$ characteristic knot} for $(K,S)$ is an oriented simple closed curve $\beta \subset S^{\circ}$ such that the class $[\beta] \in H_1(S;\mathbb Z)$ is primitive and
\[
A\, [\beta] \equiv 0 \pmod n.
\]
\end{definition}

A mod~$n$ characteristic knot $\beta$ for $K$ determines a $D_n$ quotient of the group of $K$ as in Equation~\ref{eq:rho}, first given \cite{CS}. In what follows, we generalize this idea to the setting where $K$ and $\beta$ are links and $n$ can have either parity. This generalized notion of characteristic links allows us to describe all dihedral representations of link groups. 

\begin{definition}\label{def:charlink}
Let $n \geq 1$ be an integer. A \textit{mod $n$ characteristic link} for $(L,S)$ is an oriented $r$-component link
\[
\beta := \beta_1 \cup \cdots \cup \beta_r \subset S^{\circ}
\]
whose homology class
\[
[\beta] = [\beta_1] + \cdots + [\beta_r] \in H_1(S;\mathbb Z)
\]
satisfies
\[
A\, [\beta] \equiv 0 \pmod n.
\]
Two mod $n$ characteristic links $\beta, \beta' \subset S^\circ$ are \textit{equivalent} if and only if \[[\beta]\equiv[\beta'] \pmod{n}.\]
\end{definition}

We now explain how to construct a dihedral cover branched along $L$ using a mod $n$ characteristic link $\beta$ for $(L,S)$ and a preferred reflection $\tau \in D_n$. Recall that Lemma \ref{lem:hnn} gives us a HNN presentation
\[\pi_1(E_L)\cong \pi_1(E_S)* \Z \langle m \rangle  / \{mi_+(w) m^{-1}=i_-w \text{ for all } w \in \pi_1(S) \}.\]
Given a mod $n$ characteristic link $\beta$ and a preferred reflection $\tau \in D_n$, one can define a representation
\[\rho_{\beta,\tau} \colon \pi_1(E_L) \longrightarrow D_n\]
 by the following formula:

\begin{equation}\label{eq:rho}
\rho_{\beta,\tau}(m)=\tau, \qquad \rho_{\beta,\tau}(w)=y^{lk(\beta,u)} \quad \text{ for every } u \in \pi_1(E_S),
\end{equation}
where $lk(\beta,u)$ denotes the linking number in $S^3$ of $\beta$ with a loop representing $u$. In the case where $K$ is a knot and the dihedral homomorphism is surjective, it follows that $n$ is odd and the above definition coincides with the one given in \cite{CS}. Indeed, for $n$ odd and $\tau$ a generating reflection, all choices of $\tau$ define equivalent dihedral quotients, so this piece of information can be omitted from the notation. 

 We remark that $\rho_{\beta,\tau}$, which we define on $\pi_1(E_S)* \Z$, descends to a map on $\pi_1(E_L)$. Indeed, we have that

\[\rho_{\beta,\tau}(m w^+ m^{-1}) = \tau y^{lk(\beta, w^+)} \tau^{-1} = y^{-lk(\beta, w^+)} \quad \text{and} \quad
\rho_{\beta,\tau}(w^-) = y^{lk(\beta, w^-)}.\]
Pairing $[\beta]$ with any $[w] \in H_1(S;\Z)$ via the symmetrized Seifert matrix $A$ gives
\[
[\beta]^T  A  [w] = lk(\beta^+, w) + lk(\beta^-, w) = lk(\beta, w^+) + lk(\beta, w^-) \equiv 0 \pmod n
\]
 for all $w\in \pi_1(S)$ by the definition of $\beta$. This means that
\[
lk(\beta, w^+) + lk(\beta, w^-) \equiv 0 \pmod n \quad \text{for all } w \in \pi_1(S)
\]
and the HNN relation $\rho_{\beta,\tau}(m w^+ m^{-1}) = \rho_{\beta,\tau}(w^-)$ is hence satisfied for all $w \in \pi_1(S)$.

We illustrate the correspondence between characteristic links and dihedral homomorphisms.

\begin{example}[The 2-component unlink]\label{ex:unlink}

Let $L$ denote the $2$-component unlink. Its group, $\pi_1(S^3\backslash L)\cong \Z \ast\Z$, is generated by the meridians, $m_1$ and $m_2$, of the two components, see Figure \ref{fig:unlink}. Since the group is free, mapping $m_1$ and $m_2$ to any pair of reflections in $D_n$ defines a homomorphism  $\pi_1(S^3\backslash L)\cong \Z \ast\Z\to D_n$ and an associated dihedral cover. We give an explicit description of the bijection between these covers and characteristic links for $L$, together with a preferred choice of conjugacy class of reflections.

Denote by $S$ the obvious annulus with boundary $L$ and let $\alpha$ be the (oriented) core curve of the annulus. We have $H_1(S;\mathbb Z) \cong \mathbb Z\langle [\alpha] \rangle$. The Seifert matrix is $V=[0]$, so $A=[0]$ as well. Thus, every class in $H_1(S;\mathbb Z_n)$ is characteristic for all integers $n\in \Z$. We can choose a HNN presentation for $\pi_1(E_L)$ as in Lemma \ref{lem:hnn} so that $m=m_1$. Let $\tau\in D_n$ be a preferred reflection.  An unknot bounding a disk in $S$ is a characteristic knot which defines the homomorphism $\pi_1(E_L)\to D_n$ with image $ \langle \tau \rangle \cong \mathbb Z_2 \subset D_n$. The class $k[\alpha]$ is represented by a characteristic link consisting of $k$ parallel copies of $\alpha$ on $S$. It defines the homomorphism $\pi_1(S^3\backslash L)\cong \Z \ast\Z\to D_n$ given by 
\[
\begin{aligned}
m_1 &\mapsto \tau,\\
m_2 &\mapsto \tau y^k,
\end{aligned}
\]
where we use the notation for $D_n$ given in Equation~\ref{eq:Dn}.

To see this, we will apply Equation~\ref{eq:rho}. Denote by $\lambda$ the Alexander dual of $\alpha$ and represent $\lambda$ by an embedded based loop disjoint from $S$. We then have $\pi_1(S^3\backslash S)\cong \Z\langle [ \lambda] \rangle$, so that $lk(\lambda, k\alpha)=k$. Orienting the two meridians of $L$ appropriately, we have $m_2=m_1\lambda$, see Figure~\ref{fig:unlink}. Thus, by definition, the dihedral homomorphism $\rho_{k\alpha,\tau}$ determined by the characteristic link $k\alpha$ and the reflection $\tau$ satisfies $\rho_{k\alpha, \tau}(m_1)=\tau$ and 
\[
\rho_{k\alpha,\tau}(m_2)=\tau y^{lk(\lambda, k\alpha)}= \tau y^k,
\]
as claimed.  This calculation also shows that, by varying $\beta$ and $\tau$, we can define a homomorphism mapping the pair $(m_1, m_2)$ to any pair of reflections in $D_n$.

\begin{figure}
    \centering
     \begin{overpic}[width=0.55\linewidth]{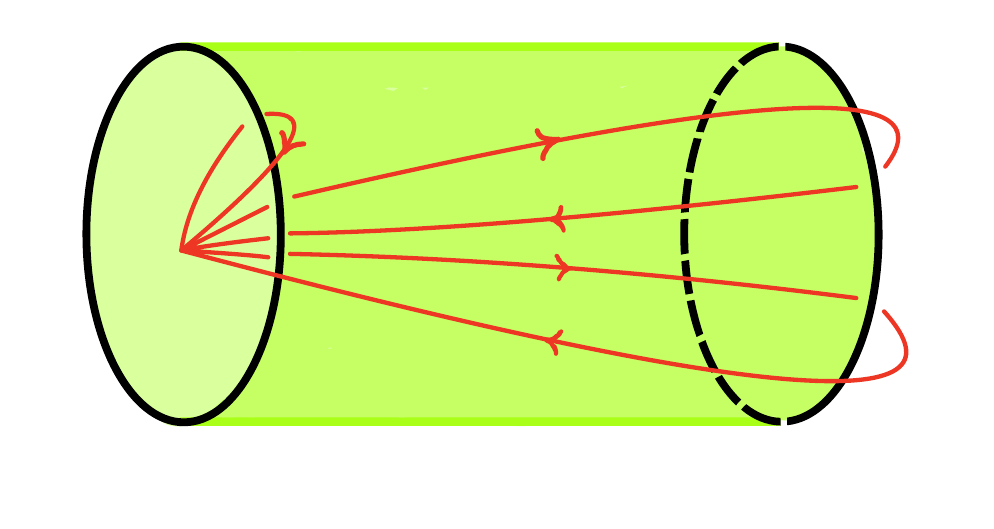}
        \put(65,43){\textcolor{red}{\Large $m_1$}}
        \put(14,35){\textcolor{red}{\Large $m_2$}}
        \put(65,13){\textcolor{red}{\Large $\lambda$}}
    \end{overpic}
    \caption{The three oriented curves $m_1$, $m_2$ and $\lambda$. Note that $m_2=m_1\lambda$}
    \label{fig:unlink}
\end{figure}

\end{example}

The following theorem generalizes the above example to all links in $S^3$. Its conclusion will be needed in our computations of dihedral cobordism groups.

\begin{theorem}\label{thm:recovery-of-dihedral-homs}
Let $L\subset S^3$ be an oriented link and $S$ a connected Seifert surface for $L$. For any representation $\rho \colon \pi_1(E_L) \rightarrow D_n$ sending meridians of $L$ to reflections in $D_n$, there exists a mod $n$ characteristic link $\beta \subset S^\circ$ and a preferred reflection $\tau \in D_n$ such that $\rho=\rho_{\beta,\tau}$.

\end{theorem}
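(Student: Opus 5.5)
Given $\rho$, we take $\tau := \rho(m)$, where $m$ is the meridian appearing in the HNN presentation of Lemma~\ref{lem:hnn}. It remains to produce $\beta$.

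The first step is to show that $\rho$ maps $\pi_1(E_S)$ into the rotation subgroup $\langle y\rangle\cong\Z_n$. Compose $\rho$ with the sign map $D_n\to\Z_2$, which sends reflections to $1$ and rotations to $0$. The composite $\pi_1(E_L)\to\Z_2$ sends every meridian of $L$ to $1$. It therefore factors through $H_1(E_L)$ as the mod $2$ reduction of the total linking number with $L$, that is, the algebraic intersection number with $S$. Loops in $E_S$ are disjoint from $S$, so they have intersection number $0$ and map to rotations. Since $\langle y\rangle$ is abelian, the restriction $\rho|_{\pi_1(E_S)}$ factors through a homomorphism $\phi\colon H_1(E_S;\Z)\to\Z_n$.

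The second step uses Alexander duality. The linking pairing $H_1(S;\Z)\times H_1(E_S;\Z)\to\Z$ is perfect, and $H_1(S)$ is free because $S$ is connected with nonempty boundary. Hence there is a class $b\in H_1(S;\Z)$ with $\phi(u)\equiv lk(b,u)\pmod n$ for all $u$. In the dual bases, $b$ is just the coordinate vector of $\phi$ lifted to $\Z$. Every class in $H_1(S;\Z)$ of a surface is represented by an oriented embedded multicurve, that is, a disjoint union of simple closed curves in $S^\circ$. We could realize it by resolving the intersections of a union of basis curves, or simply by taking parallel copies of the basis curves, which can be made disjoint after smoothing. This gives an oriented link $\beta\subset S^\circ$ with $[\beta]=b$. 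We may replace $b$ by any lift mod $n$, and if $b=0$ we let $\beta$ be a small null-homologous unknot in $S$.

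The third step checks the characteristic condition. For $w\in\pi_1(S)$, the HNN relation gives $\rho(w^-)=\rho(m)\rho(w^+)\rho(m)^{-1}=\tau y^{lk(\beta,w^+)}\tau=y^{-lk(\beta,w^+)}$. Combined with $\rho(w^-)=y^{lk(\beta,w^-)}$, this yields $lk(\beta,w^+)+lk(\beta,w^-)\equiv0\pmod n$, which is $[\beta]^tA[w]\equiv0$ for all $[w]$. Hence $A[\beta]\equiv0\pmod n$, using the symmetry of $A$.

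Finally, $\rho$ and $\rho_{\beta,\tau}$ agree on the generators $m$ and $\pi_1(E_S)$ of the HNN presentation, so they are equal. The main obstacle is the first step, which guarantees that $\pi_1(E_S)$ lands in rotations. Here we must be careful that the orientation and sign conventions of the push-offs and of the Alexander dual basis match Equation~\ref{eq:rho}. Since the sign map only sees parity, the parity part of the argument is convention-free. A possible sign discrepancy in the duality, $lk(b,u)$ versus $lk(u,b)$, is harmless because linking numbers in $S^3$ are symmetric.
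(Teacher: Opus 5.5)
Your proof is correct and uses essentially the same ingredients as the paper: the parity argument placing $\rho(\pi_1(E_S))$ in $\langle y\rangle$, Alexander duality between $H_1(S;\Z)$ and $H_1(E_S;\Z)$, and the HNN relation. The only difference is the order. The paper first uses the HNN relation to show that $\lambda$ kills $AH_1(S;\Z)$, then passes through $H_1(\Sigma_2(L);\Z)$ via Proposition~\ref{prop:kernel-classifies-characters}. You apply duality to $\lambda$ directly and then read off $A[\beta]\equiv 0 \pmod n$ from the HNN relation, which is slightly more direct and also makes explicit that the dual class is realized by an embedded oriented multicurve in $S^\circ$.
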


To prove Theorem \ref{thm:recovery-of-dihedral-homs}, we need to introduce some technical tools. We start with the next proposition, which follows from the standard Mayer--Vietoris decomposition of $\Sigma_2(L)$, as described, for example, in Rolfsen's book \cite{R}. 

\begin{proposition}\label{prop:homology-of-double-cover}
Let $L\subset S^3$ denote a link with symmetrized Seifert matrix $A$ and double branched cover $\Sigma_2(L)$. There is an exact sequence
\[
0 \longrightarrow H_2(\Sigma_2(L);\mathbb Z)
\longrightarrow H_1(S;\mathbb Z)
\xrightarrow{\ A\ }
H_1(E_S;\mathbb Z)
\longrightarrow H_1(\Sigma_2(L);\mathbb Z)
\longrightarrow 0.
\]
In particular,
\[
H_1(\Sigma_2(L);\mathbb Z) \cong \coker(A). 
\]
Note that the matrix $A$ need not be nonsingular.
\end{proposition}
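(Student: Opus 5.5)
The exact sequence will come from the Mayer--Vietoris decomposition of $\Sigma_2(L)$ obtained by cutting $S^3$ along $S$. Let $N=\nu(S)\cong S\times[-1,1]$ and $E_S=S^3\setminus N^\circ$. The double branched cover is
\[
\Sigma_2(L)=X_0\cup X_1,\qquad X_i\cong E_S,
\]
where $X_0$ and $X_1$ are two copies of $E_S$. They are glued along two copies of $S$, namely $S\times\{0\}$ and $S\times\{1\}$, as follows: the $S^+$ side of $X_0$ is glued to the $S^-$ side of $X_1$, and vice versa. In homology the intersection is two copies of $S$, glued along $L$ (the branch set). Equivalently, $\Sigma_2(L)$ is two copies of $E_S$ glued along a closed surface $F=S\cup_L S$.

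Rather than work with $F$, I will use the standard cut-and-paste (Rolfsen's) version directly. Decompose $\Sigma_2(L)=Y\cup Y'$, where $Y$ is the preimage of $E_S$, so $Y\cong E_S\sqcup E_S$. The piece $Y'$ is the preimage of $N$, which is a copy of $S\times[-1,1]$ (the branched double cover of $S\times I$ along $L$) and retracts onto $S$. The intersection $Y\cap Y'$ is the preimage of $S^+\sqcup S^-$, which is four copies of $S$.

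A cleaner route, which I will actually follow, is to take $\Sigma_2(L)=P\cup Q$ with $P$ a copy of $E_S$ and $Q$ a copy of $N$. Then $P\cap Q\cong S^+\sqcup S^-$, homotopy equivalent to $S\sqcup S$. Since $H_1(N)\cong H_1(S)$, Mayer--Vietoris reads
\[
H_2(P)\oplus H_2(Q)\to H_2(\Sigma_2)\to H_1(S)^2\xrightarrow{\ \phi\ } H_1(E_S)\oplus H_1(S)\to H_1(\Sigma_2)\to \tilde H_0(S\sqcup S)\to \tilde H_0(P)\oplus \tilde H_0(Q).
\]
The terms $H_2(E_S)$ and $H_2(N)$ vanish, because $S$ is connected with nonempty boundary and, by Alexander duality, $H_2(E_S)\cong \tilde H^0(S)=0$. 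The map $\phi$ sends
\[
(a,b)\mapsto \bigl(i_+a+i_-b,\; a+b\bigr)
\]
up to sign conventions. The one point needing care is the gluing: the deck involution swaps the two sides, so the positive side of one lift meets the negative side of the other. In the decomposition above, $S^+$ and $S^-$ therefore both embed into the single copy $P$ through $i_+$ and $i_-$.

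With signs fixed as $(a,b)\mapsto(i_+a-i_-b,\;a-b)$, I reduce by the second coordinate. The kernel consists of the pairs with $b=a$ and $(i_+-i_-)a=0$, so it would be governed by $V-V^t$. That is the Mayer--Vietoris computation for $S^3$ itself, not for $\Sigma_2$. I therefore need the twisted gluing: the lift of $N$ is the branched double cover of $S\times I$, and its two boundary copies of $S$ correspond to the $+$ side of one sheet and the $-$ side of the other. In the two-sheet model $P=E_S^{(0)}\sqcup E_S^{(1)}$, $Q=N^{(0)}\sqcup N^{(1)}$, and the connecting map, after eliminating the $H_1(N)$ summands, becomes
\[
(u,v)\mapsto (i_+u + i_-v,\; i_-u+i_+v).
\]
Quotienting by the deck symmetry, i.e.\ using the transfer to split into $\pm1$ eigenspaces with $\Z$ coefficients, is delicate, so I will avoid transfer. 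The cleanest fix is to use the presentation of $\Sigma_2$ as $E_S$ glued to itself: $\Sigma_2\setminus \tilde L$ is obtained from two copies of $E_S$, and the branched cover is $E_S^{(0)}\cup E_S^{(1)}$ glued along $S^{\pm}$ crosswise, with $L$ filled in. Taking the intersection to be a single copy of the closed-up surface $F=S\cup_L S$ with $H_1(F)\cong H_1(S)\oplus H_1(S)$ modulo boundary, the sequence reduces to one whose middle map, after a change of basis $(a,b)\mapsto(a,a+b)$, has a block form. One block is an isomorphism and the other is $i_++i_-$, which is represented by $A$ in the Alexander dual basis, as recalled at the start of Section~\ref{sec:char}. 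Cancelling the isomorphism block yields
\[
0\to H_2(\Sigma_2)\to H_1(S)\xrightarrow{A}H_1(E_S)\to H_1(\Sigma_2)\to 0.
\]
For exactness on the right, the $H_0$ terms contribute nothing, since $P$ and $Q$ are connected and the intersection is connected after filling in $L$. On the left, $H_2$ of the pieces vanishes, so the sequence starts with $0$.

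The main obstacle is exactly this bookkeeping of the crosswise gluing and signs, which is what makes the map $i_++i_-$ (giving $A=V+V^t$) rather than $i_+-i_-$. I would do it carefully in the two-sheet model and then cancel the isomorphism block. The coker statement is then immediate from right exactness.
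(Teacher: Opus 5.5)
Your final decomposition $\Sigma_2(L)=Y_0\cup_F Y_1$, with $Y_i\simeq E_S$, $F=S\cup_L S$ connected and $H_2(E_S)=0$, is the Rolfsen-style Mayer--Vietoris argument the paper cites. For a knot your block reduction goes through. After your change of basis the two diagonal blocks are $i_+-i_-$ and $i_++i_-$. The block you cancel, $i_+-i_-$, is represented by $V-V^t$, which is unimodular because it is the intersection form of a surface with connected boundary. You should say this explicitly; at present you only assert that ``one block is an isomorphism''.

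The gap is the case of links, which is how the proposition is stated and used in the paper (e.g.\ for the unlink and Hopf link examples). If $L$ has $r\geq 2$ components, both ingredients of your reduction fail.

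First, $i_+-i_-$ is not injective: it kills the image of $H_1(L)\to H_1(S)$, which has rank $r-1$.

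Second, $H_1(F)$ is not $H_1(S)\oplus H_1(S)$ modulo boundary. By Mayer--Vietoris for $F=\tilde S_0\cup_{\tilde L}\tilde S_1$, the natural map $H_1(S)\oplus H_1(S)\to H_1(F)$ has kernel $\{(\iota c,-\iota c)\}$ of rank $r-1$ \emph{and} cokernel $\tilde H_0(L)\cong\Z^{r-1}$. The cokernel is generated by classes $\delta_j$ obtained by doubling an arc $\gamma_j\subset S$ from $L_1$ to $L_j$. For the $2$-component unlink bounding an annulus, $V-V^t=0$ and $F$ is a torus with $H_1(F)\cong\Z^2$, while your description gives $\Z$. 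The block you propose to cancel is the zero map.

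To repair the argument, note that $j_1(\delta_j)=-j_0(\delta_j)$ in $H_1(E_S)$. So the $\delta_j$ contribute nothing to the coordinate carrying $i_++i_-=A$, whereas $j_0(\delta_j)$ is the Alexander dual of the relative class $[\gamma_j]\in H_1(S,\partial S)$. The block to cancel is then $\bigl(H_1(S)/\iota H_1(L)\bigr)\oplus\Z^{r-1}\to H_1(E_S)$. It is given by $i_+-i_-$ on the first summand and by the $\delta_j$ on the second. It is an isomorphism because $H_1(E_S)\cong H^1(S)\cong H_1(S,\partial S)$, and $H_1(S,\partial S)$ is the extension of $\tilde H_0(\partial S)\cong\Z^{r-1}$ (spanned by the arcs) by $H_1(S)/\iota H_1(\partial S)$. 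Only with this correction does one get $\ker\Phi\cong\ker A$ and $\coker\Phi\cong\coker A$.

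Alternatively, realize $\Sigma_2(L)$ as the boundary of the double cover $W$ of $B^4$ branched over a pushed-in copy of $S$. This $W$ has $H_1(W)=0$, $H_2(W)\cong H_1(S)$ and intersection form $\pm A$, and the sequence can be read off from the pair $(W,\partial W)$.

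Finally, discard the intermediate attempts. $E_S\cup\nu(S)$ is just $S^3$, and the preimage of $\nu(S)$ in $\Sigma_2(L)$ retracts onto $F$, not onto $S$.
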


It is well known (see, for example,~\cite[p. 166]{CS}) that regular $2n$-fold dihedral branched covers of $L$ are also cyclic $n$-fold unbranched covers of $\Sigma_2(L)$. The latter are determined by homomorphisms $H_1(\Sigma_2(L);\mathbb Z)\rightarrow \mathbb{Z}_n$. Thus, it will be useful to explain how a mod~$n$ characteristic link $\beta$ for $L$ induces a group homomorphism $H_1(\Sigma_2(L);\mathbb Z)\rightarrow \mathbb{Z}_n$.

\begin{proposition}\label{prop:characteristic-link-gives-character}
Let $L \subset S^3$ be an oriented link, let $S$ be a connected oriented Seifert surface for $L$, and let $A$ be a symmetrized Seifert matrix of $S$. Let $\beta \subset S^{\circ}$ be a mod $n$ characteristic link for $(L,S)$. The function
\[
\lambda_{\beta} \colon H_1(E_S;\mathbb Z) \longrightarrow \mathbb Z_n,
\qquad
u \longmapsto lk(\beta,u) \pmod n,
\]
is a group homomorphism which vanishes on $A H_1(S;\mathbb Z)$. Consequently, it induces a homomorphism
\[
\chi_{\beta} \colon H_1(\Sigma_2(L);\mathbb Z) \longrightarrow \mathbb Z_n.
\]
Moreover, $\chi_{\beta}$ depends only on the reduction of $[\beta]\in H_1(S;\Z)$ modulo $n$, i.e. on the equivalence class of $\beta$.
\end{proposition}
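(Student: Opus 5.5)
The proposal has three parts: $\lambda_\beta$ is well defined and additive, it vanishes on $A\,H_1(S;\Z)$, and it depends only on $[\beta]$ mod $n$.

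\emph{Well-definedness and additivity.} Since $\beta\subset S^\circ$ is disjoint from $E_S$, the linking number $lk(\beta,u)$ is defined for every $1$-cycle $u$ in $E_S$. It is invariant under homologies in $E_S$: if $u-u'=\partial c$ with $c$ a $2$-chain in $E_S$, then $lk(\beta,u)-lk(\beta,u')$ is the algebraic intersection $\beta\cdot c=0$. Linking number is also additive in $u$. Hence $\lambda_\beta$ is a well-defined homomorphism $H_1(E_S;\Z)\to\Z_n$. Under Alexander duality $H_1(E_S;\Z)\cong \Hom(H_1(S;\Z),\Z)$, given by $u\mapsto lk(\,\cdot\,,u)$, the map $\lambda_\beta$ is just evaluation of the dual functional on $[\beta]$, reduced mod $n$.

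\emph{Vanishing on the image of $A$.} By the convention fixed at the start of Section~\ref{sec:char}, the map $A$ in Proposition~\ref{prop:homology-of-double-cover} is $i_++i_-$, written in the Alexander dual basis. So for $w\in H_1(S;\Z)$ we get
\[
\lambda_\beta(Aw)=lk(\beta,w^+)+lk(\beta,w^-)=[\beta]^tA[w].
\]
This is exactly the identity already used to check that $\rho_{\beta,\tau}$ respects the HNN relation. Since $A$ is symmetric, this equals $(A[\beta])^t[w]$, which is $\equiv 0 \pmod n$ by the characteristic condition of Definition~\ref{def:charlink}. Thus $\lambda_\beta$ vanishes on $A\,H_1(S;\Z)$. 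By the exact sequence of Proposition~\ref{prop:homology-of-double-cover}, $H_1(\Sigma_2(L);\Z)\cong\coker(A)$, so $\lambda_\beta$ factors through a homomorphism $\chi_\beta$.

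\emph{Dependence only on $[\beta]$ mod $n$.} Suppose $[\beta]\equiv[\beta']\pmod n$, say $[\beta]-[\beta']=n\gamma$ with $\gamma\in H_1(S;\Z)$. Then for every $u$,
\[
lk(\beta,u)-lk(\beta',u)=n\,lk(\gamma,u)\equiv 0 \pmod n,
\]
because linking with a cycle in $E_S$ depends only on the homology class in $H_1(S;\Z)$: homologous cycles in $S$ differ by a boundary of a chain in $S$, which misses $u$. Hence $\lambda_\beta=\lambda_{\beta'}$, and therefore $\chi_\beta=\chi_{\beta'}$.

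The only real obstacle is bookkeeping with signs and orientations. One must confirm that the matrix representing $i_++i_-$ in the dual basis is $A$ rather than $A^t$, and that the pairing $lk(\beta,\cdot)$ matches that basis. Because $A$ is symmetric, either convention gives the same conclusion, so no case split is needed.
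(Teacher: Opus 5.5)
Your proposal is correct and follows the paper's proof. Both use the same key computation $\lambda_\beta(Az)=[\beta]^tAz=(A[\beta])^tz\equiv 0 \pmod n$, which relies on the symmetry of $A$; both then factor through $\coker(A)\cong H_1(\Sigma_2(L);\Z)$ and use linearity of linking for the mod $n$ dependence. Your version adds justifications the paper calls ``straightforward'': the homology invariance of $lk(\beta,\cdot)$ on $E_S$, and the identity $lk(\beta,w^+)+lk(\beta,w^-)=[\beta]^tA[w]$.
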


\begin{proof}
    The first statement is a straightforward consequence of the definition of a mod $n$ characteristic link. Since the matrix $A$ is symmetric, we have:
\[
\lambda_{\beta}(Az)= [\beta]^t\, A\, z = (A[\beta])^t\, z \equiv 0 \pmod n.
\]
     Thus $\lambda_{\beta}$ vanishes on $A H_1(S;\mathbb Z)$ as claimed. By Proposition~\ref{prop:homology-of-double-cover},  $H_1(\Sigma_2(L);\mathbb Z) \cong H_1(E_S;\mathbb Z)/im(A)$, so the second statement follows. Finally, if $[\beta]\equiv[\beta']\mod n,$ then by linearity of linking in $S^3$ we have that $lk(\beta,u)\equiv lk(\beta', u)\mod n$, so $\chi_{\beta}=\chi_{\beta'}$.
\end{proof}

\begin{proposition}\label{prop:kernel-classifies-characters}
Let
\[
\bar A \colon H_1(S;\mathbb Z_n) \longrightarrow H_1(E_S;\mathbb Z_n)
\]
be the map represented by the matrix $A$ reduced mod $n$. (That is, if $\bar x\in H_1(S;\mathbb Z_n)$ is represented by
$x\in H_1(S;\mathbb Z)$, then $\bar A(\bar x)$ is the reduction of $A(x)$ mod $n$). There is an isomorphism
\[
\ker(\bar A) \cong \Hom(H_1(\Sigma_2(L);\mathbb Z),\mathbb Z_n)
\]
mapping any element $\bar\beta \in \ker(\bar A)$ to the character $\chi_{\beta}(x)$ defined in Proposition~\ref{prop:characteristic-link-gives-character}, where  $\beta \in H_1(S;\mathbb Z)$ is any integral lift of $\bar\beta$.
\end{proposition}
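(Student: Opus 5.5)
The plan is to realize the claimed isomorphism as a composition of two standard identifications coming from the exact sequence of Proposition~\ref{prop:homology-of-double-cover}. First, apply the right-exact functor $-\otimes\Z_n$, or equivalently $\Hom(-,\Z_n)$, to the presentation
\[
H_1(S;\Z)\xrightarrow{\ A\ } H_1(E_S;\Z)\longrightarrow H_1(\Sigma_2(L);\Z)\longrightarrow 0.
\]
Left exactness of $\Hom(-,\Z_n)$ gives
\[
0\to \Hom(H_1(\Sigma_2(L);\Z),\Z_n)\to \Hom(H_1(E_S;\Z),\Z_n)\xrightarrow{A^*}\Hom(H_1(S;\Z),\Z_n),
\]
so characters of $H_1(\Sigma_2(L))$ are exactly the homomorphisms $H_1(E_S;\Z)\to\Z_n$ vanishing on $\im A$.

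Second, identify $\Hom(H_1(E_S;\Z),\Z_n)$ with $H_1(S;\Z_n)$ via Alexander duality: the linking pairing $H_1(S;\Z)\times H_1(E_S;\Z)\to\Z$ is unimodular, and with respect to a basis of $H_1(S)$ and its Alexander dual basis it is the standard dot product. Hence the map $\bar\beta\mapsto \big(u\mapsto lk(\beta,u)\bmod n\big)$ is an isomorphism $H_1(S;\Z_n)\cong\Hom(H_1(E_S;\Z),\Z_n)$. It is well defined on reductions mod $n$ by the last part of Proposition~\ref{prop:characteristic-link-gives-character}. Under this identification the dual map $A^*$ becomes multiplication by $A^t=A$ reduced mod $n$, i.e.\ $\bar A$. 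Indeed, $\lambda_\beta(Az)=[\beta]^tAz=(A[\beta])^t z$, which is exactly the computation in the proof of Proposition~\ref{prop:characteristic-link-gives-character}.

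Combining the two steps, the characters vanishing on $\im A$ correspond exactly to classes $\bar\beta$ with $\bar A\bar\beta=0$, i.e.\ $\ker\bar A$. The resulting map is $\bar\beta\mapsto\chi_\beta$, which is therefore a well-defined injective homomorphism (injectivity is inherited from the Alexander duality isomorphism) and is surjective onto $\Hom(H_1(\Sigma_2(L)),\Z_n)$.

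The only delicate point is bookkeeping of bases and the transpose. One has to make sure that $\lambda_\beta$ is genuinely given by the dot product of coordinates of $[\beta]$ against the Alexander dual coordinates of $u$, so that the condition ``$\lambda_\beta$ vanishes on $\im A$'' translates to $A^t[\beta]\equiv 0$. Symmetry of $A$ then makes this the same as $A[\beta]\equiv 0 \pmod n$. A second point is that the kernel of $\bar A$ on $H_1(S;\Z_n)$ may contain classes whose integral lifts do not satisfy $A\beta\equiv0$ for every lift; this is harmless, since $A\beta\bmod n=\bar A\bar\beta$ for any lift. Every element of the kernel is thus represented by a genuine mod $n$ characteristic link, which can be realized by disjoint simple closed curves on $S^\circ$.
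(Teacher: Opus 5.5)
Your proposal is correct and follows essentially the same route as the paper. Both arguments identify $\Hom(H_1(\Sigma_2(L);\Z),\Z_n)$ with the characters of $H_1(E_S;\Z)$ that vanish on $A\,H_1(S;\Z)$, and then use the Alexander duality isomorphism $H_1(S;\Z_n)\cong\Hom(H_1(E_S;\Z),\Z_n)$ together with the symmetry of $A$ to turn that vanishing condition into $\bar A\bar\beta=0$.

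A few points of wording could be tightened, though none is a gap:
\begin{itemize}
\item $-\otimes\Z_n$ and $\Hom(-,\Z_n)$ are not equivalent functors. What you actually use, correctly, is the left exactness of $\Hom(-,\Z_n)$.
\item Well-definedness of the duality map on all of $H_1(S;\Z_n)$ follows directly from linearity of linking numbers, not from the cited proposition, which concerns characteristic classes.
\item Your closing caveat is vacuous. Since $A\beta \bmod n=\bar A\bar\beta$, every integral lift of an element of $\ker\bar A$ is automatically a mod $n$ characteristic class.
\end{itemize}
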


\begin{proof}
    By Proposition~\ref{prop:homology-of-double-cover}, there is a surjection
\[
q \colon H_1(E_S;\mathbb Z) \twoheadrightarrow H_1(\Sigma_2(L);\mathbb Z)
\]
with kernel $A H_1(S;\mathbb Z)$. Thus, composing  with $q$ identifies
$\Hom(H_1(\Sigma_2(L);\mathbb Z),\mathbb Z_n)$ with the subgroup of
$\Hom(H_1(E_S;\mathbb Z),\mathbb Z_n)$ consisting of those homomorphisms
which vanish on $A H_1(S;\mathbb Z)$.

Alexander duality identifies $H_1(S;\mathbb Z)$ with
$\Hom(H_1(E_S;\mathbb Z),\mathbb Z)$ via the pairing
\[
([\beta],u)\longmapsto lk(\beta^+,u),
\]
where $\beta^+$ is the positive normal push-off of $\beta$ on $S$ into
$E_S$. Reducing mod $n$ gives an isomorphism
\begin{equation} \label{eq:duality-Zn}
    H_1(S;\mathbb Z_n)\cong \Hom(H_1(E_S;\mathbb Z),\mathbb Z_n).
\end{equation}
Under this identification, an element $\bar\beta \in H_1(S;\mathbb Z_n)$
corresponds to the homomorphism
\[
\lambda_{\bar\beta} \colon H_1(E_S;\mathbb Z) \to \mathbb Z_n,
\qquad
u \mapsto lk(\beta^+,u) \pmod n,
\]
where $\beta \in H_1(S;\mathbb Z)$ is any integral lift of $\bar\beta$. It is straightforward to check that this is well-defined. 

If $\bar\beta \in \ker(\bar A)$ (equivalently, any of its integral lifts is a mod~$n$ characteristic knot), then Proposition~\ref{prop:characteristic-link-gives-character}
shows that $\lambda_{\bar\beta}$ vanishes on $A H_1(S;\mathbb Z)$, and hence
descends uniquely to a character
\[
\chi_\beta \in \Hom(H_1(\Sigma_2(L);\mathbb Z),\mathbb Z_n).
\]
This is the map in the theorem statement.

Conversely, let
\[
\chi \in \Hom(H_1(\Sigma_2(L);\mathbb Z),\mathbb Z_n).
\]
Then $\chi \circ q$ is a homomorphism $H_1(E_S;\mathbb Z)\to \mathbb Z_n$.
Under the identification above, there is a unique class
$\bar\beta \in H_1(S;\mathbb Z_n)$ corresponding to $\chi \circ q$.
Since $\chi \circ q$ vanishes on $A H_1(S;\mathbb Z)$, we have, $\forall z \in H_1(S;\mathbb Z)$,
\[
0 = (\chi \circ q)(Az) = \bar\beta^T A z = (\bar A \bar\beta)^T z.
\]
Since the Alexander-duality pairing is
nondegenerate, it follows that $\bar A \bar\beta=0$. Thus
$\bar\beta \in \ker(\bar A)$. We have thus constructed an inverse to the map $\ker(\bar A) \rightarrow \Hom(H_1(\Sigma_2(L);\mathbb Z),\mathbb Z_n)$ in the theorem statement, concluding the proof. 
\end{proof}

We are now ready for the proof of the main result of this section.

\begin{proof}[Proof of Theorem \ref{thm:recovery-of-dihedral-homs}]
Let
\[
\rho\colon \pi_1(E_L)\longrightarrow D_n
\]
send a meridian of each component of $L$ to a reflection and satisfy
$\rho(m)=\tau$ in the HNN presentation described in Lemma \ref{lem:hnn}. We show that there exists a mod $n$ characteristic link $\beta \subset S^\circ$ such that $\rho=\rho_{\beta,\tau}$. Let
\[
q\colon D_n\to D_n/\langle y\rangle\cong \mathbb Z_2
\]
be the quotient map (c.f. Equation \ref{eq:Dn}), and let
\[
\phi\colon \pi_1(E_L)\to\mathbb Z_2
\]
be given by mod $2$ intersection number with $S$. Thus $\phi$ sends every
meridian of $L$ to $1$ and vanishes on $\pi_1(E_S)$. Since $q\circ\rho$ and $\phi$ agree on meridians of $L$, and meridians generate $\pi_1(E_L),$ we have 
\[
q\circ\rho=\phi.
\]
It follows that
\[
\rho(\pi_1(E_S))\subset \ker(q)=\langle y\rangle.
\]
Since $\rho(\pi_1(E_S))\subset \langle y\rangle$ and $\langle y\rangle$ is
abelian, the restriction $\rho|_{\pi_1(E_S)}$ factors through a homomorphism
\[
\lambda\colon H_1(E_S;\mathbb Z)\to\mathbb Z_n \cong \langle y\rangle\subset D_n,
\]
where the isomorphism $\mathbb Z_n \to \langle y\rangle$ sends 1 to $y$. 
Thus, for every $w \in\pi_1(E_S)$,
\[
\rho(w)=y^{\lambda([w])}.
\]
For $w\in\pi_1(S)$, the HNN relation $mw^+m^{-1}=w^-$ gives
\[
\tau \,y^{\lambda([w^+])}\,\tau^{-1}=y^{\lambda([w^-])}.
\]
But the LHS also equals $y^{-\lambda([w^+])}$, which implies
\[
\lambda([w^+])+\lambda([w^-])\equiv 0\pmod n.
\]
Using $[w^+]+[w^-]=A[w]$, we get
\[
\lambda(A\, [w])=0.
\]
Hence $\lambda$ vanishes on $AH_1(S;\mathbb Z)$ and descends to a character
\[
\chi\colon H_1(\Sigma_2(L);\mathbb Z)\longrightarrow \mathbb Z_n.
\]
By Proposition~\ref{prop:kernel-classifies-characters}, $\chi=\chi_\beta$ for some
mod $n$ characteristic link $\beta\subset S^\circ$.
Since $\lambda$ and $\lambda_\beta$ (as defined in Proposition~\ref{prop:characteristic-link-gives-character}) are the pullbacks of the same character
$\chi=\chi_\beta$ under the natural map
\[
H_1(E_S;\mathbb Z)\twoheadrightarrow H_1(\Sigma_2(L);\mathbb Z),
\]
we have that $\lambda=\lambda_\beta$. Therefore
 $\rho$ and $\rho_{\beta,\tau}$ agree on $\pi_1(E_S)$; but they also both send $m$ to $\tau$. Hence $\rho=\rho_{\beta,\tau}$. This concludes the proof.
\end{proof}
\begin{remark}\label{rmk:rhobetaequivalence}
    Let $\beta,\beta' \subset S^\circ$ be mod $n$ characteristic links and let $\tau,\tau'$ be reflections. The $D_n$-covers determined by $(\beta,\tau)$ and $(\beta',\tau')$ are isomorphic, in the sense of Definition~\ref{def:Dncover}, if and only if
    \[[\beta']=\pm[\beta]\in H_1(S;\Z_n) \quad \text{and} \quad \tau,\tau' \text{ are conjugate in } D_n.\]
    First we check that inner automorphisms of $D_n$ change $\beta$ at most up to sign. Indeed, post-composing $\rho_{\beta,\tau}$ with conjugation by $y^j$ fixes $[\beta]$; and post-composing with  conjugation by $x$ replaces $[\beta]$ by $-[\beta]$. The sign is immaterial for the invariants of Section~\ref{sec:meat}, since $[\beta]^t A [\beta]$ is a quadratic form. If instead one only asks that $\rho_{\beta,\tau}$ and $\rho_{\beta',\tau'}$ be equivalent in the sense used earlier in this section, that is up to an arbitrary automorphism of $D_n$, then the condition becomes $[\beta']=k[\beta]$ for some $k \in (\Z/n)^\times$, and the invariants of Section~\ref{sec:meat} are not preserved. For the converse, it suffices to observe, again, that passing from $[\beta]$ to $-[\beta]$ is realized by post-composing $\rho_{\beta,\tau}$ with conjugation by $x$. 
\end{remark}

We conclude this subsection with two explicit examples of mod $n$ characteristic links.
\begin{example}[(2,n)-torus links]\label{ex:generator}
Let $T(2,n)$ be a positively oriented $(2,n)$-torus link. Denote $V_n$ the Seifert matrix described in Example \ref{ex:torustake1}. One can check that the primitive vector 
 \begin{equation*} \beta_n=
        \begin{pmatrix}
1 \\ 2\\ \vdots \\ n-1
\end{pmatrix}
    \end{equation*}
    satisfies the condition
     \begin{equation*} (V_n+ V_n^t)\, [\beta_n]=
        \begin{pmatrix}
-2 & 1 & 0 & 0 & \dots & 0 \\
1 & -2 & 1 & 0 & \dots & 0 \\
\vdots & \ddots & \ddots & \ddots & \ddots & \vdots \\
0 & \dots & \dots & \dots  & 1 & -2
\end{pmatrix}
\cdot    \begin{pmatrix}
1 \\ 2\\ \vdots \\ n-1
\end{pmatrix}
= 
  \begin{pmatrix}
0 \\ \vdots \\ 0 \\ -n
\end{pmatrix} 
\equiv 0 \pmod{n}
    \end{equation*}
and is hence a mod $n$ characteristic knot for $T(2,n)$. In particular, it defines a surjective representation $\rho_{\beta_n,\tau} \colon \pi_1(E_{T(2,n)})\rightarrow D_n$ for any choice of preferred reflection $\tau \in D_n$. Moreover, the self-pairing of $[\beta_n]$ via $A_n=V_n+V_n^t$ is
\[[\beta_n]^t \, A_n \, [\beta_n]=-n^2+n .\]

\begin{remark}\label{rmk:torus}
    Up to post-composing with an automorphism of $D_n$, there is a unique surjective representation $\rho_n \colon \pi_1(S^3 \setminus T(2,n)) \twoheadrightarrow D_n$. Indeed, any such $\rho_n$ is fully determined by the assignment of two reflections $\tau$ and $\tau y^k$ to the two parallel strands highlighted in the projection diagram of $T(2,n)$ on the left hand side of Figure \ref{fig:tori}. Moreover, surjectivity is achieved if and only if $\gcd(n,k)=1$. Notice that all such $\rho_n$'s are equivalent to the Fox coloring depicted on the right hand side of Figure \ref{fig:tori} via the automorphism
    \[D_n \longrightarrow D_n\]
    \[\tau \mapsto \tau, \qquad y \mapsto y^k.\]
    Since any automorphism of $D_n$ can also be applied to a split union of copies of $T(2,n)$ (and to any surface over which $\rho_n$ extends), we can conclude that the order of $[(T(2,n),\rho_n)]$ in any of the cobordism groups we introduced does not depend on the outer-equivalence class of $\rho_n$.
     \begin{figure}
    \centering
     \begin{overpic}[width=0.45\linewidth]{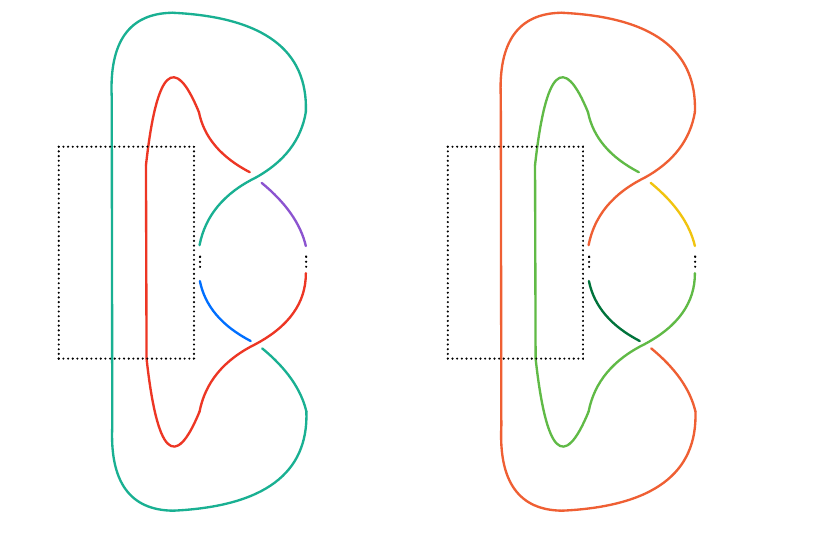}
        \put(6.5,32){\textcolor{PineGreen}{\small{$\tau y^k$}}}
        \put(19,32){\textcolor{red}{\small{$\tau$}}}
        \put(66,32){\textcolor{green}{\small{$\tau$}}}
        \put(54.5,32){\textcolor{orange}{\small{$\tau y$}}}
    \end{overpic}
    \caption{Two equivalent Fox colorings of the torus link $T(2,n)$, where $\gcd(n,k)=1$.}
    \label{fig:tori}
\end{figure}
\end{remark}

\end{example}

\begin{example}[Hopf links]\label{ex:Hopf}

The torus link $H=T(2,2)$ is the positive Hopf link, and it admits a nontrivial homomorphism to $D_n$ sending the meridians of its two components to \emph{distinct} reflections if and only if $n$ is even. Set $n=2m$ and let $S$ be the once-twisted annulus bounding $H$. Then $H_1(S;\Z)\cong \Z$ is generated by a simple loop $\alpha \subset S^\circ$ and $A=[-2]$, consistently with the convention of Example~\ref{ex:torustake1}, which for $n=2$ gives $V_2=[-1]$. It follows that
\[A \, [m\alpha]=-2m \equiv 0 \pmod{2m}\]
and hence $m[\alpha]$ is a mod $n$ characteristic link for $H$. By a computation exactly analogous to the one detailed in Example~\ref{ex:unlink}, the homomorphism determined by $(\beta, \tau)$ sends the meridians of the two components of $H$ to $\tau$ and $\tau y^m$ respectively. Moreover, we have that
\[[m\alpha]^t \, A \,[m\alpha]=-2m^2=-\frac{n^2}{2}.\]

Let $[H]$ denote the dihedral $n$-fold cover described by the mod $n$ characteristic link $m\alpha$ with a preferred reflection $\tau \in D_n$. We conclude this example by noticing that $2[H]=0\in Cob^{\leftrightarrow}_{D_n}(S^3)$. Indeed, it is shown in Figure \ref{fig:hopf} that the branch set of $2[H]$ becomes a $2$-component unlink after two colored band attachments. Each component of the resulting unlink is split and colored by a single reflection, so it can be capped off by a colored disk; this completes the null-cobordism.
 \begin{figure}
    \centering
     \begin{overpic}[width=0.75\linewidth]{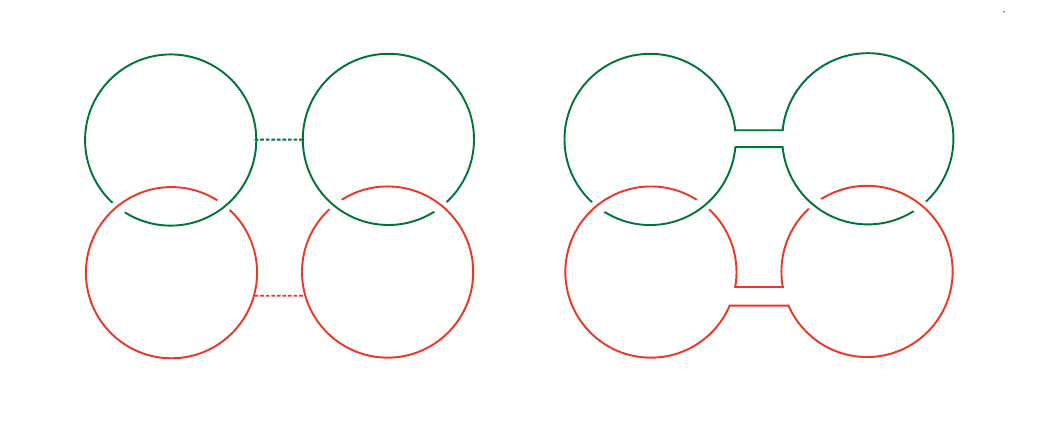}
        \put(7,35){\textcolor{PineGreen}{$\tau$}}
        \put(7,7){\textcolor{red}{$\tau y^m$}}
        \put(52,35){\textcolor{PineGreen}{$\tau$}}
        \put(52,7){\textcolor{red}{$\tau y^m$}}
       
    \end{overpic}
    \caption{A proof of the fact that $2H$ becomes a $2$-component unlink after two colored band attachments.}
    \label{fig:hopf}
\end{figure}

\end{example}

\subsection{Mod n characteristic surfaces}

In this section we introduce the notion of mod $n$ characteristic surface for an oriented properly embedded surface $F$ in $D^4$. 
We then show how to use characteristic surfaces to produce dihedral covers of $D^4$ branched over $F$. This section builds on work of Kjuchukova and Orr in \cite[Section 5.1]{KO} and we refer the reader to their paper for further details on this topic.

\begin{definition}
    Let $F\subset D^4$ be a properly embedded smooth oriented surface with non-empty boundary and let $H \subset D^4$ a connected Seifert solid for $F$ (see Definition \ref{def:ssolid}). 
    A properly embedded oriented surface $\Sigma \subset H$ 
is a \textit{mod $n$ characteristic surface for $F$} if for every class $w \in H_1(H; \Z)$ represented by a curve $\gamma \subset H$, the following condition holds
\[lk_{D^4} (i_+\gamma, \Sigma) + lk_{D^4}(i_-\gamma,\Sigma) \equiv 0 \pmod{n}.\]
Here $i_+\gamma$ and $i_- \gamma$ respectively denote the positive and negative push-offs of $\gamma$ inside $D^4 \setminus H$.

\end{definition}

\begin{proposition}\label{prop:charsurf}
    Let $F \subset D^4$ be a properly embedded smooth oriented surface with $\partial F\neq \emptyset$. Suppose that there exists a Seifert solid $H$ for $F$ containing a mod $n$ characteristic surface $\Sigma \subset H$. Then, for any reflection $\tau \in D_n$, there exists a homomorphism
    \[\rho_{\Sigma, \tau}\colon \pi_1(E_F)\longrightarrow D_n\]
    sending meridians of $F$ to reflections in $D_n$. 

    Let $L=F \cap \partial D^4$, $\beta=\Sigma \cap \partial D^4$ and $S=H \cap \partial D^4$. Then $\beta \subset S^\circ$ is a mod $n$ characteristic link for $L$ and the following diagram commutes
    \begin{center}
\begin{tikzcd}
\pi_1(E_L) \arrow[d, "\iota"'] \arrow[rr, "\rho_{\beta, \tau}"] &  & D_n \\
\pi_1(E_F) \arrow[rru, "\rho_{\Sigma, \tau}"']                  &  &    
\end{tikzcd}.
    \end{center}
    Here, $\iota \colon \pi_1(E_L) \rightarrow \pi_1(E_F)$ denotes the inclusion induced homomorphism and $\rho_{\beta, \tau}$ is defined as in Equation \ref{eq:rho}, where the HNN decompositions given by Lemma \ref{lem:hnn} and Lemma \ref{lem:hnnh} are chosen so that the map
     \[ \pi_1(E_L)\cong \bigl(\pi_1(E_S)*\Z \langle m \rangle / \sim\bigr) \; \xlongrightarrow{\iota} \bigl(\pi_1(E_H)*\Z \langle m \rangle / \sim\bigr) \; \cong \pi_1(E_F)\]
     sends $m \in \pi_1(E_S)$ to $m \in \pi_1(E_H)$.
\end{proposition}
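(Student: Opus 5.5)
The plan is to copy the three-dimensional construction of Equation~\ref{eq:rho}, using the HNN presentation of Lemma~\ref{lem:hnnh} in place of Lemma~\ref{lem:hnn}. I will define $\rho_{\Sigma,\tau}$ on the free product $\pi_1(E_H)*\Z\langle m\rangle$ by
\[
\rho_{\Sigma,\tau}(m)=\tau, \qquad \rho_{\Sigma,\tau}(u)=y^{lk_{D^4}(u,\Sigma)} \quad \text{for } u\in\pi_1(E_H).
\]
Here $lk_{D^4}(u,\Sigma)$ is the algebraic intersection number of a 2-chain bounded by $u$ in $D^4$ with $\Sigma$. Equivalently, it is the image of $[u]$ under the homomorphism $H_1(E_H;\Z)\to\Z$ Poincar\'e--Lefschetz dual to $[\Sigma,\partial\Sigma]$.

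The first step is to check that this linking number is well defined and additive on $\pi_1(E_H)$. Since $\partial\Sigma$ lies in $\partial H\subset F\cup S$ and loops in $E_H$ miss $H\supset\Sigma$, the pairing factors through $H_1(E_H)$. The relevant duality is $H_1(D^4\setminus H)\cong H^2(H,\partial H)$, which pairs with $H_2(H,\partial H)\ni[\Sigma]$. So I obtain a homomorphism into $\langle y\rangle\cong\Z_n$.

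The second step is to verify the HNN relations $m\, i_+(w)\, m^{-1}=i_-(w)$ for $w\in\pi_1(H)$. The left side maps to $\tau y^{lk(i_+w,\Sigma)}\tau^{-1}=y^{-lk(i_+w,\Sigma)}$ and the right side to $y^{lk(i_-w,\Sigma)}$. These agree precisely because $lk(i_+w,\Sigma)+lk(i_-w,\Sigma)\equiv0 \pmod n$, which is the defining condition of a mod $n$ characteristic surface. Hence $\rho_{\Sigma,\tau}$ descends to $\pi_1(E_F)$.

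It remains to see that meridians go to reflections. Every meridian of $F$ is conjugate in $\pi_1(E_F)$ to $m^{\pm1}$ times an element of $\pi_1(E_H)$: a meridian crosses $H$ once, and the HNN normal form shows it is $u\,m^{\pm1}u'$. The image $\langle y\rangle\cdot\tau^{\pm1}$ lies in the reflection coset, since $\rho(\pi_1(E_H))\subset\langle y\rangle$ and $q\circ\rho$ counts intersections with $H$ mod 2.

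For the boundary statement, I take $\beta=\Sigma\cap\partial D^4$. After a standard collar adjustment, $\beta$ is a link in $S^\circ$ with $\partial\Sigma=\beta$ on the $S$ side, and $\Sigma$ is transverse to $\partial D^4$. For a loop $\gamma\subset S$, its push-offs in $S^3\setminus S$ are also push-offs off $H$, by taking the normal of $H$ restricted to the collar. Moreover, $lk_{D^4}(i_\pm\gamma,\Sigma)=lk_{S^3}(i_\pm\gamma,\beta)$: push a 2-chain bounded by $i_\pm\gamma$ in $S^3$ slightly into $D^4$ and count its intersections with $\Sigma$ near the collar, which equal its intersections with $\beta$. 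This gives $[\beta]^tA[\gamma]\equiv0$, so $\beta$ is mod $n$ characteristic.

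Commutativity of the diagram then follows generator by generator. Choose the basepoint and the arc $\gamma$ of Lemma~\ref{lem:hnn} inside $\partial D^4$, so that $\iota(m)=m$ and $\iota$ maps $\pi_1(E_S)\to\pi_1(E_H)$. Then $\rho_{\Sigma,\tau}\iota(m)=\tau=\rho_{\beta,\tau}(m)$, and for $u\in\pi_1(E_S)$ the exponents agree by the same collar identification of linking numbers.

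I expect the main obstacle to be the sign and collar conventions in identifying $lk_{D^4}(\cdot,\Sigma)$ with $lk_{S^3}(\cdot,\beta)$, including orientations of $\partial\Sigma$ versus $\beta$. To handle this, I will fix the outward-normal-first convention and check it in a local model $S\times[0,\epsilon)$.
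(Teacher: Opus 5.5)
Your argument matches the paper's sketch. You define $\rho_{\Sigma,\tau}$ on $\pi_1(E_H)*\Z\langle m\rangle$ by $m\mapsto\tau$ and $u\mapsto y^{lk(u,\Sigma)}$, and you check the HNN relations of Lemma~\ref{lem:hnnh} using exactly the characteristic-surface condition. You then deduce that $\beta$ is characteristic, and that the diagram commutes, from $lk_{D^4}(\gamma,\Sigma)=lk_{S^3}(\gamma,\beta)$ for $\gamma\subset E_S$. Your mod $2$ intersection count with $H$, showing that meridians go to reflections, is a correct detail the paper leaves implicit.

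\textbf{The flawed step} is your justification that $lk_{D^4}(\cdot,\Sigma)$ is a well-defined homomorphism on $H_1(E_H;\Z)$.

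\emph{The duality you cite is false.} The claim $H_1(D^4\setminus H)\cong H^2(H,\partial H)$ fails already for the pushed-in collar $H=S\times[0,\tfrac12]$, which is a Seifert solid for the pushed-in Seifert surface. There $E_H$ is simply connected, while $H^2(H,\partial H)\cong H_1(S)$. The correct computation is
$H_1(E_H)\cong H^3(D^4,H\cup S^3)\cong H^2(H\cup S^3,S^3)\cong H^2(H,S)$,
which pairs with $H_2(H,S)$, not with $H_2(H,\partial H)$.

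\emph{The hypothesis you need.} You must state and use $\partial\Sigma=\beta\subset S^\circ$, i.e.\ $\Sigma\cap F=\emptyset$. The paper assumes this tacitly; without it, $lk_{D^4}(\cdot,\Sigma)$ in the definition of a characteristic surface is meaningless. Your reading ``$\partial\Sigma\subset F\cup S$'' allows $\Sigma$ a boundary circle $C\subset F$. Then adding the small meridional $2$-sphere of $C$, which meets $\Sigma$ once, to a $2$-chain bounded by $u$ changes the intersection number by $\pm1$, so $\rho_{\Sigma,\tau}$ would not be defined.

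The same hypothesis is what makes your collar identification $lk_{D^4}(i_\pm\gamma,\Sigma)=lk_{S^3}(i_\pm\gamma,\beta)$ valid. With it in place, the rest of your proof goes through.
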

\begin{proof}[Sketch of proof]
    By Lemma \ref{lem:hnnh}, $\pi_1(E_F)$ admits the following HNN presentation
    \[\pi_1(E_F)\cong \pi_1(E_H)* \Z \langle m \rangle  / \{mi_+(w) m^{-1}=i_-w \text{ for all } w \in \pi_1(H)\},\]
     where $H$ is a Seifert solid for $F$, $m$ is a generator of the infinite cyclic group and $i_+$, $i_-$ resp. denote the positive and negative push-offs of $w\in \pi_1(H)$. For a given reflection $\tau \in D_n$, we can construct a representation $\rho_{\Sigma,\tau}\colon \pi_1(E_F)\rightarrow D_n$ by setting
     \[\rho_{\Sigma, \tau}(m)=\tau, \qquad \rho_{\Sigma, \tau} (\gamma)=y^{lk(\Sigma,\gamma)} \quad \text{ for every } \gamma \in \pi_1(E_H),\]
     where $y$ is as in Equation \ref{eq:Dn}. As in the case of characteristic links, this is well-defined and the computation is identical. 

     The last part of the statement directly follows from the definitions of $\rho_{\beta, \tau}$ and $\rho_{\Sigma, \tau}$, together with the identity $lk_{D^4}(\Sigma,\gamma)=lk_{S^3}(\beta,\gamma)$ for every loop $\gamma \subset E_S$. 
\end{proof}

We conclude this section with some lemmas. We start with the following result, which will be used repeatedly in Section~\ref{sec:meat}.

\begin{lemma}\label{lem:glchar}
Let $L \subset S^3$ be a link, $\Sigma \subset S^3$ a spanning surface for $L$ and $\rho \colon \pi_1(E_L)\rightarrow D_n$ a homomorphism sending every meridian of $L$ to a reflection. Suppose that $\beta \subset \Sigma^\circ$ is a link such that
\[\rho(w)=y^{lk(\beta,w)} \quad \text{for every } w \in \pi_1(E_\Sigma).\]
If $U$ is a matrix representing the Gordon--Litherland pairing of $\Sigma$ with respect to some basis of $H_1(\Sigma;\Z)$, we have
\begin{equation}\label{eq:Utimesbeta}
    U \, [\beta]\equiv 0 \pmod n.
\end{equation}
In particular, if $\Sigma=S$ is an oriented Seifert surface for $L$, Equation~\ref{eq:Utimesbeta} becomes $A[\beta]\equiv 0 \pmod n$, that is, $\beta$ is a mod $n$ characteristic link for $(L,S)$.
\end{lemma}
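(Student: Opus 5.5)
The plan is to reproduce, for a possibly non-orientable spanning surface, the computation that showed $\rho_{\beta,\tau}$ is well defined, but run in reverse. For each class $[w]\in H_1(\Sigma;\Z)$ we find a loop in $E_\Sigma$ whose $\rho$-value is forced to be trivial, and whose linking with $\beta$ is exactly $[\beta]^t U[w]$. Since the Gordon--Litherland pairing is symmetric, $U[\beta]\equiv 0 \pmod n$ is equivalent to $lk(\beta,\tau^{-1}(w))\equiv 0 \pmod n$ for every simple closed curve $w\subset\Sigma^\circ$. By linearity it suffices to check this on curves generating $H_1(\Sigma;\Z)$, and we may assume $\Sigma$ is connected.

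Fix such a curve $w$. We connect $\tau^{-1}(w)\subset E_\Sigma$ to the basepoint and view it as an element of $\pi_1(E_\Sigma)$. There are two cases.

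\emph{Orientation-preserving $w$.} Here $\tau^{-1}(w)=w^+\sqcup w^-$. Take a short arc $\gamma$ crossing $\Sigma$ once near a point of $w$; it is a meridian-type loop, since its intersection number with $\Sigma$ is odd. As in Lemma~\ref{lem:hnn}, the band of $\Sigma$ around $w$ gives the relation $\gamma w^+\gamma^{-1}=w^-$ in $\pi_1(E_L)$, up to basing. Every element of $\pi_1(E_L)$ with odd intersection with $\Sigma$ maps to a reflection, by the same parity argument as in the proof of Theorem~\ref{thm:recovery-of-dihedral-homs}: mod $2$ intersection with $\Sigma$ agrees with $q\circ\rho$ on meridians. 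Hence $\rho(\gamma)$ is a reflection, and $\rho(\gamma)y^{a}\rho(\gamma)^{-1}=y^{-a}$. Writing $\rho(w^\pm)=y^{lk(\beta,w^\pm)}$, this gives $lk(\beta,w^+)+lk(\beta,w^-)\equiv 0 \pmod n$, which is the required identity.

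\emph{Orientation-reversing $w$.} Here $\tau^{-1}(w)$ is a single connected curve $\tilde w$ that double covers $w$. Going once around $w$ inside a tubular neighbourhood $N$ of $\Sigma$ flips the side. So the element $\tilde w$ (one full traversal, i.e.\ twice around $w$) can be written as $\tilde w = g\cdot g'$, where $g=\delta\,\epsilon$: here $\delta$ is a push-off of $w$ that starts on one side and ends on the other, and $\epsilon$ is a short arc crossing $\Sigma$ back. The loop $g$ meets $\Sigma$ once, so $\rho(g)$ is a reflection. Homotoping inside $N$ shows $\tilde w$ is homotopic in $E_\Sigma$ to $g^2$, up to conjugation and the trivial contributions of short arcs. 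Since a reflection squares to $1$, we get $\rho(\tilde w)=1$, that is, $lk(\beta,\tilde w)\equiv 0 \pmod n$.

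Combining the two cases gives $U[\beta]\equiv 0 \pmod n$. When $\Sigma=S$ is oriented, Remark~\ref{rmk:glandseifert} identifies $U$ with $A$, which yields the final sentence of the statement.

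\textbf{Main obstacle.} The hard part is the orientation-reversing case: making precise that $\tau^{-1}(w)$, as an element of $\pi_1(E_\Sigma)$, equals $g^2$ with $g$ crossing $\Sigma$ once. This must be done carefully, accounting for the basing paths and for the fact that intermediate arcs lie in $N\setminus\Sigma$. I would do this by an explicit model of the M\"obius band neighbourhood of $w$ and the van Kampen decomposition of $E_L$ as $E_\Sigma\cup N$.
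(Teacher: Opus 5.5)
Your proposal is correct and is essentially the paper's proof. You use the same three ingredients: the parity identity $q\circ\rho=\phi$; the conjugation relation $w^-=\gamma w^+\gamma^{-1}$ for two-sided $w$; and, for one-sided $w$, the observation that $\tilde w$ is freely homotopic to $g^2$ with $g$ crossing $\Sigma$ once, which is exactly the paper's loop $c=\ell\cdot f$.

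One small correction: since $g$ meets $\Sigma$, the free homotopy between $\tilde w$ and $g^2$ cannot lie in $E_\Sigma$. It lives in $E_L$, inside the solid-torus neighbourhood of the M\"obius band around $w\subset\Sigma^\circ$, which misses $L$. This is all you need, because $\rho$ is defined on $\pi_1(E_L)$. Also, your reduction to connected $\Sigma$ is unnecessary.
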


\begin{proof}
Let $q\colon D_n \rightarrow D_n/\langle y \rangle \cong \Z_2$ be the quotient map, and let $\phi \colon \pi_1(E_L)\rightarrow \Z_2$ be given by the mod $2$ intersection number with $\Sigma$. The map $\phi$ is a well-defined homomorphism, as long as the base point of $\pi_1(E_L)$ is chosen away from $\Sigma$. Both $q \circ \rho$ and $\phi$ send every meridian of $L$ to the generator of $\Z_2$, and meridians generate $\pi_1(E_L)$; hence $q \circ \rho=\phi$. In particular, $\rho$ sends every loop meeting $\Sigma$ transversely in an odd number of points to a reflection, and every loop in $E_\Sigma$ to a power of $y$.

Since the Gordon--Litherland pairing $\mathcal U$ is bilinear and every class in $H_1(\Sigma;\Z)$ is a sum of classes represented by simple closed curves, it suffices to show that $[w]^t \, U \, [\beta] \equiv 0 \pmod{n}$ for any simple closed curve $w \subset \Sigma$. By definition and symmetry of $\mathcal{U}$, we have \[[w]^t \, U \, [\beta]=lk(\beta,\tau^{-1}(w)).\] Moreover, since $\tau^{-1}(w)\subset E_\Sigma$, by hypothesis $\rho$ evaluates on $\tau^{-1}(w)$ by linking with $\beta$.

Suppose first that $w$ is orientation-preserving in $\Sigma$, so that $\tau^{-1}(w)=w^+\sqcup w^-$ consists of the two push-offs of $w$ determined by the tubular neighbourhood of $w$ in $\Sigma$. There exists  a loop meeting $\Sigma$ transversely in a single point of $w$ such that the positive and negative pushoffs of $w$ are satisfy the following relation in $\pi_1(E_L)$:
\[
w^-=\gamma\, w^+ \gamma^{-1}.
\]
By what we established earlier, $\rho(\gamma)$ is a reflection, so
\[y^{lk(\beta,w^-)}=\rho(w^-)=\rho(\gamma)\,y^{lk(\beta,w^+)}\,\rho(\gamma)^{-1}=y^{-lk(\beta,w^+)},\]
which gives \[lk(\beta,\tau^{-1}(w))=lk(\beta,w^+)+lk(\beta,w^-)\equiv 0 \pmod n.\]

Now suppose that $w$ is orientation-reversing, so that $\tau^{-1}(w)$ is a single circle $\widetilde w$ doubly covering $w$. Fix a point $a \in w$, set $\tau^{-1}(a)=\{a^+,a^-\}$, denote by $\ell$ the arc in $\widetilde w$ running from $a^+$ to $a^-$, and by $f$ a fiber arc from $a^-$ to $a^+$ meeting $\Sigma$ once, at $a$. The loop $c=\ell \cdot f$ meets $\Sigma$ transversely in a single point, so $\rho(c)$ is a reflection. On the other hand, $c^2$ is freely homotopic to $\widetilde w$ in $E_L$, so we have 
\[y^{lk(\beta,\tau^{-1}(w))}=\rho(\widetilde w)=\rho(c)^2=1,\]
that is, \[lk(\beta,\tau^{-1}(w))\equiv 0 \pmod n.\]

Finally, if $\Sigma=S$ is an oriented Seifert surface, then \[[w]^t \, U \,[\beta]=[w]^t\,A\,[\beta]\] for every $[w] \in H_1(S;\Z)$ by Remark~\ref{rmk:glandseifert}, so $A\,[\beta]\equiv 0 \pmod n$.
\end{proof}

\begin{lemma}\label{lem:bandsum}
   Let $\beta \subset S^\circ$ be a mod $n$ characteristic link for a Fox colored link $L \subset S^3$. If $L'\subset S^3$ is a Fox colored link obtained from $L$ by a (possibly non-orientable) colored band attachment via a band $b$ which does not intersect $S$ in its interior, then $S':=S\cup b$ is a spanning surface for $L'$, $\beta \subset S'^\circ$ satisfies
   \[U \, [\beta] \equiv 0 \pmod{n}\] and the representation given by Fox coloring of $L'$ coincides with
   \[w \mapsto y^{lk(\beta,w)} \quad \text{for every } w \in \pi_1(E_{S'}).\]
   If $b$ is an orientable band attachment and $S'$ is a Seifert surface for $L'$ with symmetrized Seifert matrix $A'$, then $\beta \subset S'^\circ$ is a mod $n$ characteristic link for $L'$ and
   \[[\beta]^t \, A \, [\beta]=[\beta]^t \, A'\, [\beta] \pmod{n^2}.\]
\end{lemma}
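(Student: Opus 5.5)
The proof splits into a topological part and an algebraic part. The topological part identifies the coloring of $L'$ with linking against $\beta$. The algebraic part compares pairings on $S$ and $S'=S\cup b$.

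First, $S'$ is a spanning surface for $L'$. Attaching the band $b$ to $L$ along two arcs of $L$ replaces those arcs by the other two edges of $b$. Since $b$ meets $S$ only along the attaching arcs, $S'=S\cup b$ is an embedded surface with $\partial S'=L'$. It is orientable exactly when $b$ is attached compatibly with the orientation of $S$.

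\textbf{Step 1: the coloring of $L'$ is given by linking with $\beta$.} Let $\rho'$ be the representation of $\pi_1(E_{L'})$ given by the Fox coloring of $L'$. Since $E_{S'}\subset E_S$, each loop $w\in\pi_1(E_{S'})$ is also a loop in $E_S$. By Theorem~\ref{thm:recovery-of-dihedral-homs} and Equation~\ref{eq:rho}, $\rho(w)=y^{lk(\beta,w)}$ for the original representation $\rho$ of $\pi_1(E_L)$. It remains to check that $\rho'(w)=\rho(w)$ on such loops. I would argue that the colored band attachment is a colored saddle cobordism. Away from a small ball around $b$, the colorings of $L$ and $L'$ agree. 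Moreover, a loop in $E_{S'}$ can be isotoped in $E_{S'}$ to avoid a neighborhood of $b$; there it lies in the common complement $E_{L\cup b}$, on which $\rho$ and $\rho'$ both restrict to the same map. This is where the hypothesis of a \emph{colored} band (one that respects the coloring) enters. Once Step~1 is established, Lemma~\ref{lem:glchar} applied to $(L',S',\rho')$ immediately gives $U[\beta]\equiv0\pmod n$.

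\textbf{Step 2: the orientable case.} Here $S'$ is a Seifert surface, and Lemma~\ref{lem:glchar} gives $A'[\beta]\equiv0\pmod n$, so $\beta$ is a mod $n$ characteristic link for $L'$. For the congruence, note that $[\beta]^tA[\beta]=2\,lk(\beta,\beta^+)$ is computed by pushing $\beta$ off along the normal of $S$. The surface $S$ sits inside $S'$ with the same normal direction near $\beta$, since $\beta\subset S^\circ$. Therefore the push-off $\beta^+$ is the same curve whether computed with respect to $S$ or to $S'$. Hence
\[[\beta]^tA[\beta]=[\beta]^tA'[\beta]\]
holds exactly as integers, which is stronger than the claimed congruence mod $n^2$. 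Concretely, $H_1(S)\to H_1(S')$ is induced by inclusion, and the Seifert form of $S'$ restricts to that of $S$. If $S$ and $S'$ have the same number of boundary components, I may also need to account for $H_1(S)\to H_1(S')$ failing to be injective. This causes no problem, because the pairing of $S'$ is evaluated on the image class.

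\textbf{Main obstacle.} I expect the main difficulty to be Step~1: justifying that every loop of $E_{S'}$ can be moved off the band so that the two colorings are comparable. One possible issue is that $\pi_1(E_{S'})\to\pi_1(E_S)$ is not obviously compatible with the change of link. If a direct isotopy argument fails, I would verify the identity on generators instead. The Wirtinger-type generators of $E_{S'}$ are generators of $E_S$ together with possibly one new loop passing through the band's hole when $b$ reduces the number of boundary components. For that new loop, its value under $\rho'$ is forced by the colors at the band's endpoints, and since it links $\beta$ trivially, I would check that $\rho'$ sends it to $1$.
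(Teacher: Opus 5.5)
Your proof is correct, but Step~1 uses a different mechanism from the paper's. The paper splits $\pi_1(E_{S'})\cong\pi_1(E_S)*\Z$, with the new free generator being the loop $\gamma^*$ encircling the band. Since $\gamma^*$ is a product of meridians of the two strands of $L'$ along $b$, which carry the same reflection, it gets $\rho'(\gamma^*)=1=y^{lk(\beta,\gamma^*)}$. You instead observe that $E_{S'}\subset E_S\cap\bigl(S^3\setminus(L\cup b)\bigr)$, and that a colored band attachment amounts to $\rho$ and $\rho'$ being induced by a single representation of $\pi_1(S^3\setminus(L\cup b))$. This gives $\rho'(w)=\rho(w)=y^{lk(\beta,w)}$ for every $w\in\pi_1(E_{S'})$ at once.

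Your route needs no description of $\pi_1(E_{S'})$, which is a genuine advantage, because the free-product splitting fails in general. If $S$ is a disk and the core of $b$ closes up to a trefoil, then $E_S$ is a ball but $E_{S'}$ is a trefoil exterior. So the paper's argument really works only at the level of $H_1$. It also tacitly needs $\rho=\rho'$ on the old loops, which is exactly your observation. The paper's version, in exchange, makes visible where the coloring hypothesis is used, namely $\gamma^*\mapsto\tau^2=1$.

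In Step~2, your remark that the Seifert form of $S'$ restricts to that of $S$ gives $[\beta]^tA[\beta]=[\beta]^tA'[\beta]$ exactly. At this point the paper's proof only cites Remark~\ref{rmk:glandseifert}, which yields the characteristic property, and it invokes the exact equality later in Proposition~\ref{prop:cobordinv}. Your Step~2 supplies that equality explicitly.

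Finally, your ``main obstacle'' is not one. Loops in $E_{S'}$ already avoid a neighbourhood of $b$, so no isotopy or generator check is needed. Also, $H_1(S)\to H_1(S')$ is always split injective, since $S'\simeq S\vee S^1$ for connected $S$.
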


\begin{proof}
Since $S$ is connected, a Mayer--Vietoris argument shows that there is an identification $H_1(S';\Z) \cong H_1(S;\Z) \oplus \Z \langle [\gamma] \rangle$, where $\gamma \subset S'$ is a simple loop obtained as the union of the core of the band $b$ and an arc whose interior lies in $S^\circ$. Moreover, we have that $\pi_1(E_{S'}) \cong \pi_1(E_S)*\Z$, where the free $\Z$-summand is generated by the Alexander dual to $\gamma$ and can be represented by an embedded loop $\gamma^*$ near one component of the attaching region to $b$, so $\gamma^*$ is the product of two meridians of $L'$ which have the same Fox coloring, so the image of $\gamma^*$  in $D_n$ is trivial. (In more detail, we can write  $\gamma^*$ as the product of two based meridians of $L,$ say $\gamma^*=m_1m_2$, where $m_1$ and $m_2$ are meridians of strands in $L_1$ which share an endpoint with the same strand of $L$. Since the band attachment is colored, it follows that the Fox colorings of $m_1$ and $m_2$ agree. Therefore, the induced Fox coloring on $L'$ sends $\gamma^\ast$ to the square of a reflection in $D_n$.) This means that the Fox coloring of $L'$ restricted to $\pi_1(E_{S'})$ satisfies
\[ w \mapsto y^{lk(w, \beta)} \quad \text{for every } w \in \pi_1(E_{S'}).\]
Lemma~\ref{lem:glchar} then implies that $\beta \subset S'$ is such that $U\, [\beta] \equiv 0 \bmod{n}$, as claimed. The last part of the statement follows from Remark \ref{rmk:glandseifert}, since $U \, [\beta]=A'\, [\beta] \equiv 0 \pmod{n}$.
\end{proof}

We conclude this section with a lemma describing how a mod $n$ characteristic link is carried along a tubing, or through a half-twisted band attachment (see Section~\ref{sec:preliminaries} for a description of these moves). It will be used to compare invariants computed from different spanning surfaces.  

\begin{lemma}\label{lem:transport}
Let $L$, $\Sigma$, $\rho$ and $\beta \subset \Sigma^\circ$ be as in Lemma~\ref{lem:glchar}, and let $\Sigma'$ be obtained from $\Sigma$ either by a tubing or by a half-twisted band attachment. Then there exists a link $\beta' \subset \Sigma'^\circ$ with the property
\[\rho(w)=y^{lk(\beta',w)} \quad \text{for every } w \in \pi_1(E_{\Sigma'}),\]
and such that
\[[\beta']^t\, U'\, [\beta']=[\beta]^t \,U\, [\beta],\]
where $U$ and $U'$ denote Gordon--Litherland matrices of $\Sigma$ and $\Sigma'$ respectively. For a tubing, $\beta'$ can be chosen to be the union of $\beta$ with a certain number of parallel copies of the belt circle of the tube; for a half-twisted band attachment, we let $\beta'=\beta$.
\end{lemma}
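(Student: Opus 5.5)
We treat the two moves separately, working throughout with the Gordon--Litherland form $\mathcal U$ and the characterization of $\beta$ in Lemma~\ref{lem:glchar}: $\rho(w)=y^{lk(\beta,w)}$ for all $w\in\pi_1(E_\Sigma)$.

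\emph{Half-twisted band attachment.} Here $\Sigma'=\Sigma\natural b$, where $b$ is a small unknotted half-twisted band in a ball $B$ meeting $\Sigma$ in a disk. We set $\beta'=\beta\subset\Sigma^\circ\subset\Sigma'^\circ$. We have $H_1(\Sigma')\cong H_1(\Sigma)\oplus\Z\langle c\rangle$, with $c$ the core of the band closed up in $B$, and $\pi_1(E_{\Sigma'})$ is obtained from $\pi_1(E_\Sigma)$ by adding the relation killing the small loop $c^*$ encircling the band inside $B$. That loop bounds a disk in $B\setminus\Sigma$ away from the band, so no new generators appear and the identity $\rho(w)=y^{lk(\beta,w)}$ persists on $\pi_1(E_{\Sigma'})$. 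The Gordon--Litherland form restricted to $H_1(\Sigma)$ is unchanged, because $\tau^{-1}$ of curves in $\Sigma$ is unchanged away from $B$. Since $[\beta']=([\beta],0)$, this gives $[\beta']^tU'[\beta']=[\beta]^tU[\beta]$. The only new entry is $\mathcal U(c,c)=\pm1$, and it is irrelevant.

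\emph{Tubing.} Let the tube be $T=S^1\times D^1$ attached along two disks $D_1,D_2\subset\Sigma$. Let $\mu$ be the belt circle of $T$ and let $\gamma$ be a loop running through the tube and back along an arc in $\Sigma$ (or in another component). Then $H_1(\Sigma')$ contains $H_1(\Sigma\setminus(D_1\cup D_2))$, which adds the class $[\mu]=\pm[\partial D_1]$, together with $\gamma$. On the complement side, $\pi_1(E_{\Sigma'})$ is $\pi_1(E_\Sigma)$ cut by the tube. It gains a generator, namely the Alexander dual $\mu^*$ of $\gamma$: a loop passing through the tube's core, whose linking with $\mu$ is $1$. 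It loses the meridian of the tube core, which now links $\mu$.

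The key step is to choose $k\in\Z_n$ with $\rho(\mu^*)=y^{lk(\beta,\mu^*)+k}$ and set $\beta'=\beta\cup k\mu$, using $k$ parallel copies of the belt circle. Then $\rho$ agrees with $y^{lk(\beta',\cdot)}$ on the new generator by construction. On old elements of $\pi_1(E_\Sigma)\cap\pi_1(E_{\Sigma'})$ the agreement is unchanged, because such loops miss the tube's interior and so do not link $\mu$: $\mu$ bounds a disk $D_1\subset\Sigma$ that they avoid. This shows that $\pi_1(E_{\Sigma'})$ is generated by these elements, which is the main point to verify carefully via van Kampen.

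For the form, $\mathcal U'(\mu,x)=0$ for every $x$, because $\mu$ bounds the disk $D_1$ in the complement of $\tau^{-1}(\Sigma')$ after a push-off. Equivalently, $[\mu]$ lies in the radical up to boundary classes. Hence
\[
[\beta']^tU'[\beta']=[\beta]^tU'[\beta]+2k\,\mathcal U'(\mu,\beta)+k^2\mathcal U'(\mu,\mu)=[\beta]^tU[\beta].
\]

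The main obstacle is to justify that $\mathcal U'$ restricted to the image of $H_1(\Sigma)$ agrees with $\mathcal U$. This needs representatives of classes in $H_1(\Sigma)$ chosen away from $D_1\cup D_2$; their push-offs are then the same in both surfaces, because the tube lies in the complement. A second point to check is the case in which $\mu$ is null-homologous in $\Sigma'$ (when the tube joins different components). In that case the same argument applies, with $\gamma$ absent and $k=0$.
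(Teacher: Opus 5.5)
Your overall route is the paper's: $\beta'=\beta$ for a half-twisted band, and for a tube $\beta'=\beta\cup k\mu$, with $k$ read off from $\rho$ on the loop $\mu^*$ through the tube. Agreement on loops missing $\Sigma\cup T$ comes from $\mu$ bounding $D_1$, and the self-pairing follows from expanding the quadratic form. However, your half-twisted band case rests on a false claim. The loop $c^*$ encircling the band links the core $c$ once, so no disk it bounds can avoid the band. By Alexander duality the new class $c\in H_1(\Sigma')$ forces a new class in $H_1(E_{\Sigma'})$; in fact $\pi_1(E_{\Sigma'})\cong\pi_1(E_\Sigma)*\Z\langle c^*\rangle$. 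It is $\pi_1(E_\Sigma)$ that is obtained from $\pi_1(E_{\Sigma'})$ by killing $c^*$, not the other way around.

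So ``no new generators appear'' is wrong, and you must check $\rho(c^*)=y^{lk(\beta,c^*)}$ on this generator. Both sides are trivial. First, $lk(\beta,c^*)=0$, because $c^*$ bounds a disk in $B$ and $\beta$ can be kept out of $B$. Second, in $B$ the link is one trivial arc, so $\pi_1(B\setminus L)\cong\Z\langle m\rangle$. Since the boundary of the M\"obius band is homologous to $2c$ in it, $lk(c^*,L)=\pm2$, so $c^*=m^{\pm2}$ there and $\rho(c^*)=\rho(m)^{\pm 2}=1$, as $\rho(m)$ is a reflection. This is the check behind the paper's remark that $\beta$ does not link the meridian of the new band, and it uses the same mechanism as Lemma~\ref{lem:bandsum}.

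In the tubing case your argument matches the paper's, but two claims are false.

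\emph{The radical claim.} $[\mu]$ is not in the radical of $\mathcal U'$. For a self-tubing, take $x\subset\Sigma'$ running once over the tube. The outer push-off of $\mu$ links $x$ once and the inner one does not, so $\mathcal U'(x,\mu)=\pm1$ (the familiar off-diagonal entry of a stabilization). Your disk $D_1$ fails here because the inner push-off of such an $x$ passes through it. What you actually use is only $\mathcal U'(\mu,\beta)=\mathcal U'(\mu,\mu)=0$, and this does hold. One push-off of $\mu$ bounds a disk inside the tube, and the other bounds a cross-sectional disk meeting $\Sigma'$ only in a circle of the tube. Neither disk is met by $\beta$ or by a parallel copy of $\mu$. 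This is the paper's computation $lk(\beta,\tau'^{-1}(e))=lk(e,\tau'^{-1}(e))=0$.

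\emph{The different-components remark.} When the tube joins different components, $\mu$ is not null-homologous and $k$ need not vanish. Take the $2$-component unlink spanned by two disks, with $\beta=\emptyset$, $\rho(m_1)=\tau$ and $\rho(m_2)=\tau y^j$. The tubed surface is the annulus of Example~\ref{ex:unlink}, whose core is $\mu$, and $\beta'$ must be $j$ suitably oriented parallel copies of $\mu$. Your general recipe already handles this case, which is the one used for additivity in Proposition~\ref{prop:cobordinv}, so simply drop the remark.

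Two smaller points. The loop encircling the tube is not ``lost'': it becomes nontrivial in $E_{\Sigma'}$, dual to your surface curve $\gamma$, and does not link $\mu$. It misses $\Sigma\cup T$, though, so your old-loop argument covers it. Finally, to define $k$ you need $\rho(\mu^*)\in\langle y\rangle$. This holds because $\mu^*$ meets $\Sigma$ exactly twice, by the first paragraph of the proof of Lemma~\ref{lem:glchar}.
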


\begin{proof}
Assume $\Sigma'$ is obtained from $\Sigma$ by a half-twisted band attachment and set $\beta':=\beta \subset \Sigma'$. Since a tubular neighbourhood of $\beta$ in $\Sigma$ is untouched by this operation, it follows that
\[[\beta']^t \, U' \, [\beta']=[\beta]^t \, U \, [\beta].\] Moreover, it is easy to check that the restriction of $\rho$ to $\pi_1(E_{\Sigma'})$ is given by $w \mapsto y^{lk(\beta',w)}$, as wanted, since $\beta$ does not link the meridian of the new band.

Suppose now that $\Sigma'$ is obtained by adding a tube $T$ to $\Sigma$ along two open disks $D_1^\circ, D_2^\circ \subset \Sigma^\circ$ disjoint from a neighbourhood of $\beta$. Denote by $e \subset \Sigma'^\circ$ the belt circle of $T$. 
Let $\gamma \subset E_{\Sigma'}$ be a based loop which enters $T$ through $D_1$, runs through the tube, exits through $D_2$ and closes up outside; see Figure \ref{fig:sum}. (We do not fret about the choice of paths to the basepoint because they do not affect what follows.)
 \begin{figure}
    \centering
     \begin{overpic}[width=0.55\linewidth]{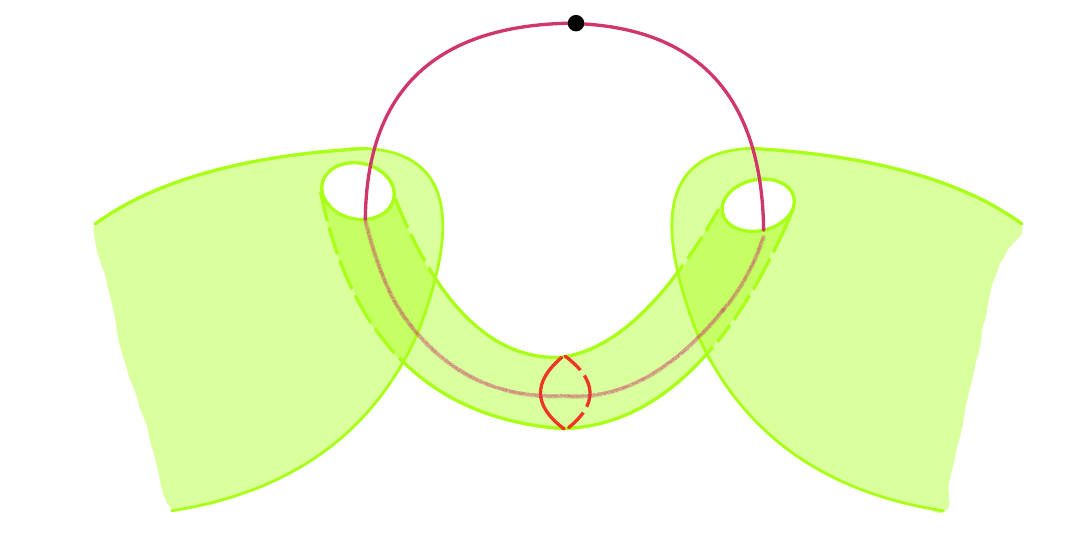}
       \put(58,43){\textcolor{red}{\large{$\gamma$}}}
       \put(2,42){\textcolor{ForestGreen}{\small{$(\Sigma \setminus (D_1^\circ \sqcup D_2^\circ))\cup T$}}}
       
       \put(45,7.5){\textcolor{red}{\large{$e$}}}
    \end{overpic}
    \caption{The curve $\gamma$.}
    \label{fig:sum}
\end{figure}
Orient $\gamma$ so that $lk(e,\gamma)=1$. 
Both \[w \mapsto \rho(w) \quad  \text{and} \quad w \mapsto y^{lk(\beta,w)}\] define homomorphisms \[H_1(E_{\Sigma'};\Z)\rightarrow \langle y \rangle\subset D_n.\] Indeed, $\Sigma$ and $\Sigma'$ represent the same class in $H_2(S^3,L;\Z_2)$. This implies that every loop in $E_{\Sigma'}$ has trivial mod $2$ intersection number with $\Sigma$, and $\rho$ maps it to a power of $y$ by the first paragraph of the proof of Lemma~\ref{lem:glchar}. The two homomorphisms agree on loops disjoint from $\Sigma\cup T$, because such loops lie in $E_\Sigma$. Define $c \in \Z$ by $\rho(\gamma)=y^{lk(\beta,\gamma)+c}$. The homomorphism $u \mapsto y^{lk(e,u)}$ vanishes on based loops disjoint from $\Sigma \cup T$, since such loops are disjoint from the disk $D_1$ spanning $e$ (since $D_1$ and the basepoint lie in separate connected components of $E_{\Sigma\cup T}$), and takes the value $1$ on $\gamma$. Since $H_1(E_{\Sigma'};\Z)$ is generated by $[\gamma]$ together with classes of loops disjoint from $\Sigma \cup T$, we conclude that
\[\rho(w)=y^{lk(\beta,w)+c\,lk(e,w)}=y^{lk(\beta',w)} \quad \text{for every } w \in \pi_1(E_{\Sigma'}),\]
where $\beta'\subset \Sigma'^\circ$ denotes the union of $\beta$ with $|c|$ parallel copies of the belt circle $e$, oriented as $e$ if $c>0$ and oppositely if $c<0$. 

It remains to compare the self-pairings. By bilinearity and symmetry of the Gordon--Litherland pairing (Remark~\ref{rmk:glandseifert}), we have
\[[\beta']^t\,U'\,[\beta']=[\beta]^t\,U'\,[\beta]+2c\; [\beta]^t \, U' \, [e]+c^2\; [e]^t \, U' \, [e]\]
A neighbourhood of $\beta$ in the surface is unchanged by the tubing, so \[[\beta]^t\,U'\,[\beta]=[\beta]^t\,U\,[\beta].\] Moreover, 
\[[e]^t \, U' \, [e]=lk(e,\tau'^{-1}(e))=0, \qquad [\beta]^t \, U' \, [e]=lk(\beta, \tau'^{-1}(e))=2\,lk(\beta,e)=0\]
and the conclusion follows.
\end{proof}

\section{Extending dihedral covers over the 4-ball}\label{sec:extensions}

In this section, we show how to extend a given $D_n$-cover over the $4$-ball, provided that such an extension exists.

As usual, let $L \subset S^3$ be an oriented link, $S \subset S^3$ a connected oriented Seifert surface for $L$, and $E_L$ and $E_S$ the exteriors of $L$ and $S$, respectively. Choose a basis of $H_1(S;\mathbb Z)$, let $V$ be the corresponding Seifert matrix of $S$ and let $A = V+V^T$.

Given a mod $n$ characteristic link $\beta$ for $L$, we have that
\[A\, [\beta]=n\, \alpha\]
for some class $\alpha \in H_1(E_S;\Z)$. It follows that
\begin{equation}\label{eq:inv}
    [\beta]^t\, A\, [\beta]=n\,[\beta]^t\, \alpha=a_1 n^2+a_2n
\end{equation}
for some integers $a_1, a_2$ with $0 \leq a_2 \leq n-1$.

The next proposition gives a sufficient condition for the branched cover associated to $\beta$ to bound in $Cob_{D_n}(S^3)$. Such condition is expressed in terms of the coefficients $a_1$ and $a_2$ in Equation \ref{eq:inv}.

\begin{proposition}\label{prop:inj}
Let $p$ be a dihedral $n$-fold cover branched along $L$ and $\beta \subset S^\circ$ a mod $n$ characteristic link such that $\omega_p=\rho_{\beta, \tau}$ for some reflection $\tau \in D_n$ (c.f. Theorem \ref{thm:recovery-of-dihedral-homs}). If $a_1$ is even and $a_2=0$, then \[[p]=0\in Cob_{D_n}(S^3).\]
\end{proposition}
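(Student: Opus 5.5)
The strategy is to build a null-cobordism by extending the $D_n$-cover over a surface in $D^4$ using Proposition~\ref{prop:charsurf}. If we find a properly embedded oriented surface $F\subset D^4$ with $\partial F = L$, a Seifert solid $H$ for $F$ with $H\cap \partial D^4 = S$, and a mod $n$ characteristic surface $\Sigma\subset H$ with $\Sigma\cap\partial D^4=\beta$, then $\rho_{\Sigma,\tau}$ extends $\rho_{\beta,\tau}=\omega_p$. Removing a small ball around an interior point of $D^4$ away from $F$ turns this into a $D_n$-cover cobordism from $p$ to the empty cover. Its class is the identity, so $[p]=0$. This approach follows \cite[Section 5.1]{KO}, and the plan is to reproduce it.

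The construction goes as follows. First, modify $\beta$ within its equivalence class without changing $\rho_{\beta,\tau}$; by Proposition~\ref{prop:characteristic-link-gives-character}, only $[\beta]$ mod $n$ matters. Second, push the interior of $S$ slightly into $D^4$, so that $S\times[0,\epsilon]$ is a collar. Third, cap off $\beta$ by a surface. Since $\beta\subset S^\circ$ is null-homologous in $S^3$, it bounds a surface $\Sigma_0$. The task is to perform ambient surgery on $S\times I$ (a 4-dimensional thickening of the 3-dimensional $H$), adding 2-handles to $H$ along curves on $S$ and surgering $F$ correspondingly, so that $\beta$ bounds a surface $\Sigma\subset H$ and the characteristic condition holds for all of $H_1(H)$.

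The natural approach is to choose a symplectic-type basis adapted to $\beta$. Since $A[\beta]=n\alpha$, write $[\beta]$ and look for a curve $\delta\subset S$ with $\delta\cdot\beta=\pm1$ on $S$ after stabilizing $S$ if needed (Lemma~\ref{lem:stab}, with Lemma~\ref{lem:transport} controlling how $\beta$ and its self-pairing change). Then surger $S$ along a collection of curves disjoint from $\beta$ that span a "half" of $H_1(S)$, realized by 2-handles in $D^4$, so that $H$ becomes a handlebody-like solid in which $\beta$ bounds.

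The main obstacle is the characteristic condition on the new classes, together with the need for the framing/linking of each surgery curve to be compatible mod $n$. Here the hypotheses enter. The self-linking $[\beta]^tA[\beta]=a_1n^2$ with $a_1$ even means $lk(\beta,\beta^+)=\tfrac12[\beta]^tA[\beta]=\tfrac{a_1}{2}n^2$ is divisible by $n^2$. Equivalently, $\beta/n$ has integral self-linking, which is exactly what is needed to make $\Sigma$ satisfy $lk(i_\pm\gamma,\Sigma)$ sums $\equiv 0$ mod $n$ on the new meridional classes of $H$. Concretely, I would first replace $\beta$ by $\beta+n\eta$ for a suitable $\eta\in H_1(S)$ to arrange $\alpha\equiv 0$ along the surgery curves. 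The evenness of $a_1$ and $a_2=0$ should make this solvable, since the change in self-pairing is $2n\eta^tA\beta+n^2\eta^tA\eta$, which lies in $2n^2\Z$. I would then verify the characteristic condition on the generators of $H_1(H)$ coming from the cores of the added handles. If the direct surgery bookkeeping becomes unwieldy, the fallback is to cite the extension criterion of \cite{KO} for the case $n$ odd and adapt its proof to the link case and to even $n$, using Lemma~\ref{lem:glchar} in place of knot-specific arguments.
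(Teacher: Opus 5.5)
You have the same architecture as the paper: build $F\subset D^4$, a Seifert solid $H$ with $H\cap\partial D^4=S$, and a characteristic surface $\Sigma\subset H$ with $\Sigma\cap\partial D^4=\beta$, then conclude with Proposition~\ref{prop:charsurf}. The gap is in how the hypothesis is used.

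The paper's proof has an explicit first step, following \cite[Lemma~6.2]{KO}. After stabilizing $S$, it arranges $[\beta]^tA[\beta]=0$ \emph{exactly}, and only then builds $H$. Note that ``$a_1$ even and $a_2=0$'' just says $[\beta]^tA[\beta]\in 2n^2\Z$. Your computation that $\beta\mapsto\beta+n\eta$ changes the self-pairing by $2n^2(\eta^t\alpha+\eta^tV\eta)\in 2n^2\Z$ only shows that this residue is invariant. It does not show that $0$ is attained, and on a fixed $S$ it need not be. For instance, take $n=2$, $S$ an unknotted annulus with $V=[-4]$, and $\beta$ its core. Then $[\beta]^tA[\beta]=-8=-2n^2$ satisfies the hypothesis. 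But every representative of $\pm[\beta]$ mod $2$ is an odd multiple $b$ of the core, with self-pairing $-8b^2\leq -8$.

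The missing idea is a stabilization. Add a tube away from $\beta$, with belt circle $e$ and dual curve $f$, so that $e^tAe=0$, $e^tAf=\pm 1$ and $e^tA[\beta]=0$. Add copies of $e$ to $\beta$ as in Lemma~\ref{lem:transport} so that $\beta$ stays characteristic, hence $f^tA[\beta]\in n\Z$. Then take $\eta=f+ke$. The change $2n\,f^tA[\beta]+n^2(f^tAf\pm 2k)$ runs over all of $2n^2\Z$ as $k$ varies, so the self-pairing can be brought to $0$.

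Your surgery step would also not go through as described. Attaching $3$-dimensional $2$-handles to $S\times I$ along curves $c_i\subset S$ requires the $c_i$ to bound disjoint embedded disks in $D^4$ inducing the $S$-framing. That means they must form a slice link on which the Seifert form vanishes. A half-basis of $H_1(S)$, or any family of curves spanning $[\beta]$, has no reason to admit this.

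What works is to cap off $\beta$ itself by a surface $G$ pushed into $D^4$. Glue a thickening of $G$ to the collar along a neighbourhood of $\beta$ in $S$, and take $\Sigma$ to be $(\beta\times I)\cup G$. The thickening must restrict on $\beta$ to the normal direction of $\beta$ inside $S$. The obstruction to extending that framing over $G$ is, up to sign, the relative Euler number $lk(\beta,\beta^+)=\tfrac12[\beta]^tA[\beta]$. This integral condition, made zero by the first step, is where the hypothesis is consumed. It does not enter as a mod $n$ linking constraint on the characteristic condition for the new classes of $H_1(H)$, as your proposal suggests.

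Falling back on \cite{KO} is what the paper does. Without the reduction to self-pairing exactly $0$, however, you have not checked that the cited construction applies.
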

\begin{proof}
  Let $S \subset S^3$ be the oriented Seifert surface containing the mod $n$ characteristic link $\beta$. Following \cite[Section 6.2]{KO}, we construct a Seifert solid $H \subset D^4$ whose boundary contains a Seifert surface for $L$, together with a mod $n$ characteristic surface $\Sigma \subset H$ meeting $\partial D^4$ along the mod $n$ characteristic link $\beta$ inducing the given cover $p$. 

  \textbf{Step 1:} One can show that, up to stabilizing $S$ a finite number of times, we can assume without loss of generality that $[\beta]^t \, A \, [\beta] =0$. The proof is identical to the one of Lemma \cite[Lemma 6.2]{KO} and is hence omitted. 

  \textbf{Step 2:} As in \cite[Section 6.2]{KO}, one can construct a Seifert solid $H \subset D^4$ such that $\partial H = F \cup_L -S$ for some properly embedded surface $F \subset D^4$, together with a characteristic surface $\Sigma \subset H$ satisfying $\Sigma \cap \partial D^4=\beta$. By Proposition \ref{prop:charsurf}, the pair $(\Sigma, \tau)$ defines a dihedral $n$-fold branched cover of $D^4$ branched along $F$, which over $\partial D^4 = S^3$ restricts to the cover corresponding to $(\beta, \tau)$.
\end{proof}
\begin{remark}\label{rem:knots}
   Let $\beta$ be a mod $n$ characteristic link and suppose that $[\beta]^t\, A\, [\beta]=a_1 n^2$, i.e. $a_2=0$ in the notation used in Equation \ref{eq:inv}. Then
    \[a_1 n^2=[\beta]^t\, A \, [\beta]= 2\,[\beta]^t\, V \, [\beta].\]
    If $n$ is odd, this implies that $a_1$ is even. In other words, when $n$ is odd the condition $a_2=0$ is enough to perform the construction in Proposition \ref{prop:inj}.
\end{remark}

On the other hand, we show that the vanishing of $a_2$ is a sufficient condition to have $[p]=0\in Cob^{\leftrightarrow}_{D_n}(S^3)$ in the unoriented realm.

\begin{proposition}\label{prop:injnon}
    Let $p$ be a $D_n$-cover branched along $L$ and $\beta \subset S^\circ$ a mod $n$ characteristic link such that $\omega_p=\rho_{\beta, \tau}$ for some reflection $\tau \in D_n$ (c.f. Theorem \ref{thm:recovery-of-dihedral-homs}).  If $a_2=0$ for some fixed orientation of $B_p$, then \[[p]=0\in Cob^{\leftrightarrow}_{D_n}(S^3).\]
\end{proposition}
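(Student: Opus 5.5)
Since $a_2=0$, we have $[\beta]^t A[\beta]=a_1n^2$. If $a_1$ is even, Proposition~\ref{prop:inj} already gives $[p]=0\in Cob_{D_n}(S^3)$. Forgetting orientations maps this oriented null-cobordism to an unoriented one, so $[p]=0\in Cob^{\leftrightarrow}_{D_n}(S^3)$. By Remark~\ref{rem:knots}, this settles every odd $n$. The remaining case is $n$ even and $a_1$ odd. Here the plan is to add to $p$ a cover that is null-cobordant in the unoriented group and whose self-pairing is $\pm n^2$ (mod $2n^2$), so that the parity of $a_1$ flips.

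The natural candidate is the Hopf-link cover $[H]$ of Example~\ref{ex:Hopf}. It has self-pairing $-n^2/2$, so $2[H]$ has self-pairing $-n^2$ on the Seifert surface given by the boundary connected sum of the two twisted annuli. Example~\ref{ex:Hopf} shows $2[H]=0$ in $Cob^{\leftrightarrow}_{D_n}(S^3)$. I will form $p\sqcup 2H$ and take as Seifert surface the boundary connected sum $S\natural S_H\natural S_H$, arranged so that the characteristic link is $\beta\cup m\alpha_1\cup m\alpha_2$. The symmetrized Seifert matrix is block diagonal, so the self-pairing is $a_1n^2-n^2=(a_1-1)n^2$ with $a_1-1$ even. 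Proposition~\ref{prop:inj} then gives $[p\sqcup 2H]=0$ in the oriented group, hence also in the unoriented group. Therefore
\[[p]=[p]+2[H]=[p\sqcup 2H]=0\in Cob^{\leftrightarrow}_{D_n}(S^3).\]

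The main obstacle is checking that the split union with a connected boundary-sum Seifert surface really has representation $\rho_{\beta\cup m\alpha_1\cup m\alpha_2,\tau}$ with a common preferred reflection $\tau$. The HNN meridian is shared, so each summand must be recoloured, up to conjugation, so that its distinguished meridian maps to the same $\tau$.
\begin{itemize}
\item For $H$, Remark~\ref{rmk:rhobetaequivalence} lets me choose $\tau$ freely within the conjugacy class, and Proposition~\ref{prop:sumdef} lets me conjugate each summand independently without changing cobordism classes.
\item When $n$ is even and the reflections lie in different conjugacy classes, I will instead use the freedom from Example~\ref{ex:Hopf}. The Hopf meridians are coloured $\tau$ and $\tau y^m$, so either class can be placed on the HNN meridian.
\item Loops passing through the connecting bands link the characteristic links additively. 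Lemma~\ref{lem:bandsum} handles this: boundary-summing the surfaces is an orientable band attachment that changes neither the link nor the pairing.
\end{itemize}

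An alternative I would try if this bookkeeping fails is to use the mirror. Reversing the orientation of one component does not change $a_2$, and in the unoriented group this freedom could adjust $a_1$ by a suitable linking term.
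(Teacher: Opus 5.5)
Your overall plan is exactly the paper's. You treat $a_1$ even via Proposition~\ref{prop:inj}, which covers all odd $n$ by Remark~\ref{rem:knots}. For $a_1$ odd you add $2[H]$ to shift the self-pairing by $-n^2$, apply Proposition~\ref{prop:inj} to $p\sqcup 2H$, and use $2[H]=0$ in $Cob^{\leftrightarrow}_{D_n}(S^3)$.

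The gap is in the step you flag yourself. $S\natural S_H\natural S_H$ is not a Seifert surface for $L\sqcup H\sqcup H$. Attaching a band along boundary arcs merges boundary components, so $\partial(S\natural S_H\natural S_H)$ is a band-connected sum $L\# H\# H$, with two fewer components than the split union. Your justification, that this is ``an orientable band attachment that changes neither the link nor the pairing'', misreads Lemma~\ref{lem:bandsum}: there $S\cup b$ bounds the \emph{new} link $L'$, and only the pairing is preserved. As written, Proposition~\ref{prop:inj} is applied to the cover over $L\#H\#H$, not to $p\sqcup 2H$.

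There is also a smaller slip in your second bullet. It fails when $4\mid n$, because then $m=n/2$ is even and $\tau$, $\tau y^m$ are conjugate. You would instead choose the Hopf link's preferred reflection in the required class from the start. This is harmless, since Example~\ref{ex:Hopf} and the null-cobordism of $2H$ work for any $\tau$.

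The paper instead \emph{tubes} the surfaces together, by surgery along interior disks, which keeps the boundary equal to the split union. It then invokes Lemma~\ref{lem:transport}. The characteristic link becomes $\beta\cup m\alpha_1\cup m\alpha_2$ together with $c$ parallel copies of each belt circle $e$, where $c$ is read off from $\rho$ on a loop running through the tube. Since $[e]^tU'[e]=0$ and $[\beta]^tU'[e]=0$, the self-pairing is exactly $a_1n^2-n^2$. The belt-circle copies also absorb any mismatch between the reflections on the two sides of the tube, so none of your recolouring bookkeeping is needed. The preferred reflection is simply the image of the new HNN meridian, as at the end of the proof of Lemma~\ref{lem:invariant}.

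Alternatively, you can rescue your band picture as follows. Treat the bands as colored oriented $1$-handles, giving a cobordism in $Cob_{D_n}(S^3)$ from $p\sqcup 2H$ to the cover over $L\#H\#H$; this is where matching colors at the band ends is genuinely required. Check, as in Lemma~\ref{lem:bandsum}, that $\beta\cup m\alpha_1\cup m\alpha_2$ still describes the coloring. Then apply Proposition~\ref{prop:inj} to $L\#H\#H$.
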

\begin{proof}
If $a_2=0$ and $a_1$ is even, Proposition \ref{prop:inj} implies that $[p]=0\in Cob_{D_n}(S^3)$. In particular, this means that $[p]=0\in Cob^{\leftrightarrow}_{D_n}(S^3)$ as well.

Suppose now that $a_1$ is odd. This implies that $n$ is even, see Remark \ref{rem:knots}. Define $q$ to be the dihedral $n$-fold cover of $S^3$ branched over the split union of $B_p$ with two Hopf links $2H$ endowed with the Fox coloring determined by the characteristic link $n/2[\alpha]$ described in Example \ref{ex:Hopf} together with the choice of a reflection $\tau \subset D_n$. An orientable Seifert surface $S_q$ for $B_q$ can be obtained by tubing two orientable Seifert surfaces for $B_p$ and $B_{2H}$ respectively. With respect to this Seifert surface, a mod $n$ characteristic link $\beta_q$ for $q$ can be constructed as in Lemma \ref{lem:transport}. In particular, if $A_q$ is a symmetrized Seifert matrix for $S_q$, we have that
\[\beta_q^t A_q \beta_q=\beta^t A \beta -n^2=(a_1-1)n^2.\]
Proposition \ref{prop:inj} implies that $[q]=0\in Cob_{D_n}(S^3)$. Since $2[H]=0\in Cob^{\leftrightarrow}_{D_n}(S^3)$ by Example \ref{ex:Hopf} and $[q]=[p]+2[H]$, the conclusion follows.
\end{proof}

\begin{remark}\label{rmk:inj}
   What we really showed in Proposition \ref{prop:injnon} is that $p$ bounds an \textit{orientable} Fox colored surface in $D^4$, a stronger conclusion than merely $[p]=0\in Cob^{\leftrightarrow}_{D_n}(S^3)$. This will be used in the proof of Corollary~\ref{cor:cob}.
\end{remark}

\section{Cobordism invariants}\label{sec:meat}

The aim of this section is to define the invariant $\kappa_n$ of Theorem~\ref{thm:main} and to show that it is indeed a cobordism invariant, and defines the asserted group homomorphisms.

As usual, let $L \subset S^3$ be a link, $S$ an oriented connected Seifert surface, $V$ a Seifert matrix and $A=V+V^t$. Suppose that $\beta$ is a mod $n$ characteristic link for $L$, i.e. $A\,[\beta]\equiv 0\pmod n$. We can write
\[[\beta]^t \,A\,[\beta]=a_1n^2+a_2n\]
with $0 \leq a_2 \leq n-1$, see Equation \ref{eq:inv}. We set

\begin{equation}\label{eq:kappa-def}
\kappa_n(\beta):=\tfrac{1}{n}\,[\beta]^t\,A\,[\beta] \pmod 2n \;\in\; \Z_{2n}.
\end{equation}
By definition, $\kappa_n(\beta)=a_1n+a_2 \bmod 2n$. Denote by $[a_1]_2$ the reduction of $a_1$ modulo $2$ and observe that $\kappa_n(\beta)$ determines the pair $([a_1]_2, a_2)$ via
\[[a_1]_2=\frac{\kappa_n([\beta])-a_2}{n} \pmod{2} \quad \text{and} \quad a_2=\kappa_n([\beta]) \pmod n .\]

Let $Br_{D_n}(S^3)$ denote the set of $D_n$-covers of $S^3$ branched over an oriented link. The set underlying $Cob_{D_n}(S^3)$ is obtained from $Br_{D_n}(S^3)$ by identifying cobordant covers. 
\begin{lemma}\label{lem:invariant}
  For every integer $n \geq 2$, there is a well-defined map (which we also denote by $\kappa_n$)
\[\kappa_n \colon Br_{D_n}(S^3)\longrightarrow \Z_{2n}\]
  given by 
  \[p \mapsto \kappa_n([\beta]),\]
where $p$ is a $D_n$-cover branched over an oriented link $L$ and $\beta$ is a mod~$n$ characteristic link for $L$ such that the monodromy map $\omega_p$ inducing $p$ equals $\rho_{\beta,\tau}$ for some reflection $\tau \in D_n$.
\end{lemma}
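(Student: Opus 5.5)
The plan is to show that $\kappa_n([\beta])$ does not depend on any of the choices made: the characteristic link $\beta$ within its class on a fixed Seifert surface $S$, the reflection $\tau$ together with the representative $\omega_p$ of its inner-automorphism class, and finally the Seifert surface $S$ itself. Existence of some $(S,\beta,\tau)$ with $\omega_p=\rho_{\beta,\tau}$ is guaranteed by Theorem~\ref{thm:recovery-of-dihedral-homs}, applied to a connected Seifert surface (one can always tube components together).

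\textbf{Step 1 (fixed $S$).} Suppose $\rho_{\beta,\tau}$ and $\rho_{\beta',\tau'}$ define the same $D_n$-cover. Remark~\ref{rmk:rhobetaequivalence} gives $[\beta']=\pm[\beta]+n z$ for some $z\in H_1(S;\Z)$. The sign is harmless because the expression is a quadratic form. Expanding,
\[[\beta']^tA[\beta']=[\beta]^tA[\beta]\pm 2n\,z^tA[\beta]+n^2 z^tAz .\]
Since $A[\beta]\equiv 0\pmod n$, the middle term is divisible by $2n^2$. The last term is $n^2\cdot 2z^tVz$, so it is also divisible by $2n^2$. Dividing by $n$, the value of $\kappa_n$ is unchanged mod $2n$. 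Changing the basis of $H_1(S;\Z)$ replaces $A$ by $P^tAP$ and $[\beta]$ by $P^{-1}[\beta]$, which leaves the form invariant.

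\textbf{Step 2 (changing $S$).} By Lemma~\ref{lem:stab}, any two connected oriented Seifert surfaces inducing the given orientation on $L$ become isotopic after oriented stabilizations. It therefore suffices to compare $S$ with a single oriented tubing $S'$. Lemma~\ref{lem:transport} produces $\beta'=\beta\cup(\text{copies of the belt circle})\subset S'$ with $\rho(w)=y^{lk(\beta',w)}$ on $\pi_1(E_{S'})$ and the same Gordon--Litherland self-pairing. By Remark~\ref{rmk:glandseifert} this self-pairing equals $[\beta']^tA'[\beta']$, and by Lemma~\ref{lem:glchar} $\beta'$ is characteristic on $S'$. The HNN setup still gives $\rho=\rho_{\beta',\tau}$ with the same meridian $m$, since $\rho$ agrees on $\pi_1(E_{S'})$ and on $m$. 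Combined with Step~1, any characteristic link on $S'$ inducing $p$ yields the same $\kappa_n$. An ambient isotopy of surfaces transports everything and preserves linking numbers.

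\textbf{Main obstacle.} The delicate point is the mod $2n^2$ bookkeeping in Step~1: one needs the evenness of $z^tAz$ and not just its integrality, and one must justify that Remark~\ref{rmk:rhobetaequivalence} covers changes of $\tau$ within its conjugacy class as well as of the choice of meridian $m$ in the HNN presentation. If $m$ changes, the change amounts to conjugating by an element of $D_n$, which again alters $[\beta]$ only by sign modulo $n$. Throughout I would use that $n\geq 2$ only to avoid degenerate cases; the argument itself is uniform in $n$.
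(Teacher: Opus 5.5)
Your proposal is correct and follows the paper's proof essentially step for step. On a fixed Seifert surface you use Remark~\ref{rmk:rhobetaequivalence} to write $[\beta']=\pm[\beta]+nz$, and you control the change modulo $2n^2$ via $A[\beta]\equiv 0 \pmod n$ and $z^tAz=2z^tVz$; between Seifert surfaces you reduce to a single oriented stabilization (Lemma~\ref{lem:stab}), transport $\beta$ with Lemma~\ref{lem:transport}, and recover the characteristic property from Lemma~\ref{lem:glchar}, exactly as the paper does (the paper additionally makes explicit that $\beta$ is first isotoped within $S$ off the tube's attaching disks).
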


\begin{proof}
Given a $D_n$-covering $p: Y\to S^3$ as above, by Theorem~\ref{thm:recovery-of-dihedral-homs}, we can associate to $p$ a mod $n$ characteristic link $\beta \subset S^\circ$ such that $\omega_p=\rho_{\beta, \tau}$ for some reflection $\tau \in D_n$, as asserted in the Lemma. We then have to show that the definition of $\kappa_n$ does not depend on the choice of the mod $n$ characteristic link $\beta$ and Seifert surface $S$.

Suppose that $\beta$ and $\beta'$ are two mod $n$ characteristic links associated to the same $D_n$-cover $p$. If $\beta$ and $\beta'$ are contained in the same Seifert surface $S$, then $[\beta']=\pm[\beta]\in H_1(S;\Z_n)$ by Remark~\ref{rmk:rhobetaequivalence}. Replacing $\beta'$ by $-\beta'$ if necessary, which changes neither $[\beta']^t\,A\,[\beta']$ nor the $D_n$-cover it induces, we may assume that \[[\beta']=[\beta]+n\,x \quad  \text{for some } x \in H_1(S;\Z).\] If
\[[\beta]^t \,A\,[\beta]=a_1n^2+a_2 n\]
with $0 \leq a_2 \leq n-1$, then
\[[\beta']^t\, A\,[\beta']=[\beta]^t\,A\,[\beta]+2n\, x^t\,A\,[\beta]+n^2 \, x^t\,A\,x.\]
Since $\beta$ is a mod $n$ characteristic knot, we can write $A\,[\beta]=n\,y$ for some $y \in H_1(E_S;\Z)$. Moreover, \[x^t \,A\, x=2 \,x^t\,V\,x\] and hence
\[[\beta']^t \,A\, [\beta']=(a_1+2 x^t y + 2 x^t V x) n^2 +a_2 n=a_1'n^2+a_2'n,\]
where $a_2'=a_2$ and $a_1'$ has the same parity of $a_1$. Dividing by $n$, we get \[\kappa_n(\beta')=\kappa_n(\beta)+2n\left(x^ty+x^tVx\right)\equiv \kappa_n(\beta) \in \Z_{2n}.\] We can conclude that the values of $\kappa'_n$ coincide for these two mod $n$ characteristic links.

To finish the proof, we have to deal with the case when $\beta$ and $\beta'$ are contained in non-isotopic Seifert surfaces. To do this, recall that any two oriented connected Seifert surfaces for the same link become ambiently isotopic after finitely many stabilizations, see Lemma \ref{lem:stab}. It suffices to show that, given $\beta \subset S^\circ$ and a Seifert surface $S^s$ obtained from $S$ via a single stabilization, there exists a mod $n$ characteristic link $\beta^s$ in the interior of $S^s$ such that $\beta^s$ determines the monodromy of $p$ and
\[[\beta^s]^t\, A^s\, [\beta^s]=[\beta]^t \,A \,[\beta],\]
where $A^s$ denotes a symmetrized Seifert matrix for $S^s$. 

After possibly performing an isotopy of $\beta$ inside $S$, we may assume that the attaching disks of the stabilizing tube are disjoint from $\beta$. 
Lemma~\ref{lem:transport}, applied to $\rho=\rho_{\beta,\tau}$ and to the tubing producing $S^s$, provides a link $\beta^s \subset (S^s)^\circ$ such that \[\rho(w)=y^{lk(\beta^s,w)}\quad \text{for every } w \in \pi_1(E_{S^s})\] and
\[[\beta^s]^t \,A^s \,[\beta^s]=[\beta]^t \,A\, [\beta].\] More precisely, $\beta^s$ is the union of $\beta$ with $c$ parallel copies of the belt circle of the stabilizing tube (where $c$ is as in the proof of Lemma~\ref{lem:transport}).  Lemma~\ref{lem:glchar} applied to  $\beta^s$ then implies that $A^s\, [\beta^s]\equiv 0 \pmod n$, i.e. $\beta^s$ is a mod $n$ characteristic link for $L$ in $S^s$. Finally, we check that $\beta^s$ and $\beta$ define the same $D_n$-cover. Let $\mu$ be the meridian of $L$ appearing in the HNN presentation of $\pi_1(E_L)$ determined by $S^s$, as in Lemma~\ref{lem:hnn}, and set $\tau''=\rho(\mu)$, a reflection by the first paragraph of the proof of Lemma~\ref{lem:glchar}. The representations $\rho$ and $\rho_{\beta^s,\tau''}$ agree on $\pi_1(E_{S^s})$ and on $\mu$, which together generate $\pi_1(E_L)$ by Lemma~\ref{lem:hnn}; hence $\rho_{\beta^s,\tau''}=\rho=\rho_{\beta,\tau}$.

Using what we have just shown, we can always reduce to the case when $\beta$ and $\beta'$ are contained in the same Seifert surface and the conclusion follows.
\end{proof}

We now perform a similar construction in the unoriented context. Let $\hat Br_{D_n}(S^3)$ denote the set of $D_n$-covers of $S^3$ branched over an unoriented link. Notice that $Cob^{\leftrightarrow}_{D_n}(S^3)$ is obtained from $\hat Br_{D_n}(S^3)$ by identifying cobordant covers. 

\begin{lemma}\label{lem:nonorinv}
For every integer $n \geq 2$, there is a well-defined map 
\[\kappa^{\leftrightarrow}_n \colon \hat Br_{D_n}(S^3)\longrightarrow \Z_{n}\]
  given by 
  \[p \mapsto \kappa_n([\beta]) \pmod{n},\]
where $p$ is a $D_n$-cover branched over an unoriented link $L$ and $\beta$ is a mod~$n$ characteristic link for $L$ such that the monodromy map $\omega_p$ inducing $p$ equals $\rho_{\beta,\tau}$ for some reflection $\tau \in D_n$.
\end{lemma}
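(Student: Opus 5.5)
The plan is to reduce to Lemma~\ref{lem:invariant}. Fix an orientation of $L$ and compute $\kappa_n(p) \in \Z_{2n}$ for $p$ viewed as a cover branched over the oriented link. Lemma~\ref{lem:invariant} already shows that this value is independent of the Seifert surface and of the characteristic link chosen, as long as the orientation is fixed. So the only thing left to check is that the residue $a_2 = \kappa_n(\beta) \bmod n$ does not depend on the chosen orientation of $L$. Reversing the orientation of every component at once reverses the orientation of a Seifert surface $S$. This leaves the Seifert pairing unchanged up to transposition, so $A$ is unchanged and $[\beta]^t A [\beta]$ stays the same. The real issue is therefore reversing orientations on a proper subset of components.

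To handle partial reversals, I would work with spanning surfaces and the Gordon--Litherland pairing instead of Seifert surfaces. I would define an auxiliary quantity $\tfrac1n[\beta]^t U [\beta] \bmod n$. Here $\Sigma$ is any spanning surface for $L$, and $\beta \subset \Sigma^\circ$ is a link with $\rho(w) = y^{lk(\beta,w)}$ for all $w \in \pi_1(E_\Sigma)$. This quantity is an integer because $U[\beta] \equiv 0 \pmod n$ by Lemma~\ref{lem:glchar}.

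First I would show that this quantity mod $n$ does not depend on $\beta$ for a fixed $\Sigma$. Two such links $\beta, \beta'$ satisfy $lk(\beta - \beta', w) \equiv 0 \pmod n$ for every $w$ in the exterior, up to the sign ambiguity of Remark~\ref{rmk:rhobetaequivalence}. By Alexander duality for $\Sigma$, this gives $[\beta'] = \pm[\beta] + n x$. Expanding as in Lemma~\ref{lem:invariant} gives $[\beta']^tU[\beta'] = [\beta]^tU[\beta] + 2n\,x^tU[\beta] + n^2 x^tUx$. After dividing by $n$, the correction is divisible by $n$, since $U[\beta] \in n\Z$. Here only the mod $n$ value survives, because $x^tUx$ need not be even on a non-orientable surface; this is exactly why we lose the $\Z_{2n}$ refinement.

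Next I would show the quantity does not depend on $\Sigma$. By Lemma~\ref{lem:spanstab}, any two spanning surfaces are related by tubings, half-twisted band attachments, and their inverses. Lemma~\ref{lem:transport} shows the self-pairing is preserved under each move, for a suitable transported $\beta'$. For the inverse moves, I would apply the same lemma in the opposite direction and then use the $\beta$-independence above.

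Finally, for any orientation of $L$, a Seifert surface $S$ is itself a spanning surface. By Remark~\ref{rmk:glandseifert} we have $U = A$ on $S$, and by Lemma~\ref{lem:glchar} the characteristic links of Theorem~\ref{thm:recovery-of-dihedral-homs} satisfy the hypothesis. Hence $\kappa_n(\beta) \bmod n$ equals the orientation-free spanning-surface quantity, and $\kappa_n^{\leftrightarrow}$ is well defined.

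I expect the main obstacle to be the $\beta$-independence step on a non-orientable $\Sigma$. Making the Alexander duality identification precise, with $H_1(\Sigma) \cong \Hom(H_1(E_\Sigma),\Z)$ still holding for non-orientable $\Sigma$, and tracking the sign coming from conjugation, needs care. If that becomes awkward, I would instead only ever transport one fixed $\beta$ along the chain of moves. I would then compare the results on a single Seifert surface for the new orientation using Lemma~\ref{lem:invariant}, which makes the independence step for general $\Sigma$ unnecessary.
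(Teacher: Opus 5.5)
Your proof is correct. It uses the same ingredients as the paper's: Lemma~\ref{lem:spanstab}, Lemma~\ref{lem:transport}, Alexander duality on a possibly non-orientable spanning surface, and the expansion of $[\beta']^tU[\beta']$ for $[\beta']=\pm[\beta]+nx$. The organization differs.

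The paper takes Seifert surfaces $S,S'$ for the two orientations. It asserts a single spanning surface $\Sigma$ obtained from \emph{both} by adding tubes and half-twisted bands, pushes $\beta$ and $\beta'$ up to $\Sigma$ with Lemma~\ref{lem:transport}, and compares them there. Your preliminary reduction to Lemma~\ref{lem:invariant} and the discussion of total versus partial reversal are not needed in either version.

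You instead show that $\tfrac1n[\beta]^tU[\beta] \bmod n$ is invariant under each move of Lemma~\ref{lem:spanstab}, in both directions. This is more faithful to Lemma~\ref{lem:spanstab} as stated, which allows removals; a common surface reached by additions alone is not literally what that lemma provides. You also track the sign coming from inner automorphisms, which the paper leaves implicit.

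The price is that at a removal step you need \emph{some} admissible link on the larger surface before transporting down, and your $\beta$-independence step does not provide one. It exists by the same duality. By the first paragraph of the proof of Lemma~\ref{lem:glchar}, $\rho$ maps $\pi_1(E_\Sigma)$ into $\langle y\rangle$, so it factors through a homomorphism $H_1(E_\Sigma;\Z)\to\Z_n$. That homomorphism is linking with a class in $H_1(\Sigma;\Z_n)$; lift the class and represent it by an embedded oriented multicurve.

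The duality you were worried about is fine. Linking identifies $H_1(S^3\setminus\Sigma;\Z)$ with $H^1(\Sigma;\Z)$ for any compact surface with boundary, orientable or not, and $H_1(\Sigma;\Z)$ is free.

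Your fallback plan does not work as stated. Lemma~\ref{lem:transport} only handles additions, and across a removal a fixed $\beta$ may run through the tube or band being deleted, so it cannot simply be carried along. Even on the paper's common-stabilization route, comparing the two transported links on $\Sigma$ still needs exactly your $\beta$-independence step.
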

\begin{proof}
    As in the proof of Lemma \ref{lem:invariant}, we have by Theorem~\ref{thm:recovery-of-dihedral-homs} that such $(\beta, \tau)$ exist; and we need to show that the definition of $\kappa^{\leftrightarrow}_n$ does not depend on the choice of the mod $n$ characteristic link $\beta$ and Seifert surface.

    Let $\beta\subset S^\circ $ and $\beta' \subset S'^\circ$ be mod $n$ characteristic links for $L \subset S^3$ corresponding to equivalent $D_n$-covers for some choices of reflections $\tau$ and $\tau'$ respectively. Here $S$ and $S'$ are oriented Seifert surfaces inducing possibly different orientations on $L$.  
    
    By Lemma \ref{lem:spanstab}, there exists a possibly non-orientable surface $\Sigma$ which is obtained from both $S$ and $S'$ after a finite number of tubings and additions of half twisted bands. Lemma \ref{lem:transport} implies the existence of links $\beta^s, \beta'^s \subset \Sigma^\circ$ satisfying
    \[ y^{lk(\beta^s,w)}=\rho_{\beta,\tau}(w)=\rho_{\beta'^s,\tau'}(w)=y^{lk(\beta'^s,w)} \quad \text{for every }  w \in \pi_1(E_\Sigma).\]
    In particular, the above equation implies that 
    \[
    lk(\beta^s,w)=lk(\beta'^s,w) \pmod n  \quad \text{ for every } w\in \pi_1(E_\Sigma).
    \]
Since the isomorphism in Equation~\ref{eq:duality-Zn} is given by linking with a class in $H_1(S; \Z_n),$ we conclude that
    \[[\beta^s]=[\beta'^s]\in H_1(\Sigma;\Z_n),\]
    and therefore
    \[[\beta^s]^t \, U \, [\beta^s] \equiv [\beta'^s]^t \, U \, [\beta'^s] \pmod{n^2}  ,\]  where $U$ denotes the Gordon--Litherland matrix for $\Sigma$. Lemma \ref{lem:transport} (with Remark \ref{rmk:glandseifert}) also implies that
    \[[\beta^s]^t \, U \, [\beta^s]=[\beta]^t \, A \, [\beta], \qquad [\beta'^s]^t \, U \, [\beta'^s]=[\beta']^t \, A' \, [\beta'].\]
Putting everything together, we get that
\[[\beta]^t \, A [\beta] \equiv [\beta']^t \, A' \, [\beta'] \pmod{n^2},\]
as desired.
\end{proof}

We now want to promote the map $\kappa_n$ in Lemma \ref{lem:invariant} to a cobordism invariant. This is done in the following proposition.

\begin{proposition}\label{prop:cobordinv}
For every integer $n \geq 2$, the map $\kappa_n$ from Lemma \ref{lem:invariant} induces a group homomorphism
\[\kappa_n \colon Cob_{D_n}(S^3) \longrightarrow \Z_{2n}.\]

\end{proposition}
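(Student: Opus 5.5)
Two things have to be shown: additivity of $\kappa_n$ under split union, and invariance of $\kappa_n$ under $D_n$-cover cobordism. Additivity is direct. If $p$ and $q$ have branch sets $L_p$, $L_q$ with characteristic links $\beta_p\subset S_p$, $\beta_q\subset S_q$, take $S$ to be the boundary connected sum of $S_p$ and $S_q$ along a band. Equivalently, tube together disjoint Seifert surfaces lying in separated balls. Then $H_1(S)\cong H_1(S_p)\oplus H_1(S_q)$ and $A=A_p\oplus A_q$. Using Lemma~\ref{lem:transport}, as in the proof of Proposition~\ref{prop:injnon}, we obtain a characteristic link $\beta$ for $p\sqcup q$. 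It equals $\beta_p\cup\beta_q$ together with copies of a belt curve, and the belt-curve copies contribute nothing to the self-pairing. Hence $\kappa_n(p\sqcup q)=\kappa_n(p)+\kappa_n(q)$. A small check is needed that the preferred reflections can be matched. This is harmless by Proposition~\ref{prop:sumdef}, since conjugating one summand does not change its class and, by Remark~\ref{rmk:rhobetaequivalence}, does not change its $\kappa_n$.

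Since $\kappa_n$ is additive and inverses are mirrors, invariance reduces to the statement \emph{if $[p]=0$ then $\kappa_n(p)=0$}. Indeed, if $p_0\sim p_1$, then $p_1\sqcup \overline{p_0}$ bounds a $D_n$-cover of $D^4$: glue the cobordism to the product cobordism for $\overline{p_0}$ and flip one end. So suppose $\omega_p$ extends to $\rho\colon\pi_1(D^4\setminus F)\to D_n$ for an oriented surface $F$ with $\partial F=L$. I would choose a Seifert solid $H$ for $F$ with $\partial H=F\cup -S$. Then, mimicking the proof of Theorem~\ref{thm:recovery-of-dihedral-homs} in one dimension up via Lemma~\ref{lem:hnnh}, $\rho|_{\pi_1(E_H)}$ factors through a character $H_1(E_H)\to\Z_n$. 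By Alexander--Lefschetz duality in $D^4$ this character is represented by linking with a class $[\Sigma]\in H_2(H,\partial H;\Z_n)$. Lifting to an integral surface $\Sigma\subset H$, the class $\partial\Sigma\cap S$ gives a characteristic link $\beta$ for $p$. The HNN relation forces $\Sigma$ to be mod $n$ characteristic, i.e. $i_+ + i_-$ applied to $\Sigma$ vanishes mod $n$.

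The key computation is then to express $[\beta]^tA[\beta]=lk(\beta,\beta^+)+lk(\beta,\beta^-)$ in terms of $\Sigma$. Push $\Sigma$ off $H$ in the $\pm$ normal directions. The linking in $S^3$ of $\beta$ with $\beta^\pm$ equals the intersection number in $D^4$ of $\Sigma$ with a 3-chain bounded by $\Sigma^\pm$ rel boundary. Since $\Sigma$ is characteristic, $\Sigma^++\Sigma^-$ is $n$ times a boundary (mod the $n$-divisible part), say $\Sigma^++\Sigma^-=nC+\partial(\text{stuff})$ with $C$ a relative 2-cycle in $E_H$. Then $[\beta]^tA[\beta]=n\,(\Sigma\cdot C)$ plus terms that are multiples of $n^2$. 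This shows $a_2=0$ and gives $\kappa_n(p)\equiv n\cdot(\Sigma\cdot C)\pmod{2n}$.

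The main obstacle, which I expect to be the hardest step, is showing that $a_1$ is even, i.e. that $\Sigma\cdot C$ is even. For that I would use that the contribution is of the form $2\,\Sigma\cdot\Sigma^+$-type, a symmetrized intersection on the oriented 3-manifold $H$. The self-intersection of $\Sigma$ pushed along the normal of $H$ in $D^4$ is the same for $+$ and $-$ pushes, since the normal bundle of $H$ is trivial. This gives a factor $2$, exactly as $x^tAx=2x^tVx$ in the proof of Lemma~\ref{lem:invariant}. If this direct four-dimensional argument proves messy, an alternative is to simplify $F$ by a handle decomposition (births, saddles, deaths) and check that $\kappa_n$ is unchanged under each colored elementary move. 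Saddles are handled by Lemma~\ref{lem:bandsum}, which gives equality mod $n^2$, and orientability of the band should upgrade this to mod $2n^2$. Births and deaths add split colored unknots, which have $\kappa_n=0$.
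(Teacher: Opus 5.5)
There is a genuine gap in your main route, at the step that carries all the content: showing $\kappa_n(p)=0$ when $\omega_p$ extends over $D^4\setminus F$. Your reduction to null-cobordant covers, via additivity and mirrors, is fine.

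You need the character $\lambda\colon H_1(E_H;\Z)\to\Z_n$ to be given by linking with an \emph{integral} embedded surface $\Sigma\subset H$ with $\partial\Sigma\subset S$. The relevant group is $H_2(H,S)$, not $H_2(H,\partial H)$, because linking in $D^4$ requires $\partial\Sigma\subset S^3$. Duality gives $H_1(E_H;\Z)\cong H^2(H,S)\cong H_1(H,F)$, and this can have $n$-torsion, for instance if $H$ contains a punctured lens space as a boundary connected summand. A character that is nonzero on such torsion is not linking with any integral class. Nothing in your argument prevents this, and you do not show that $H$ can be chosen to avoid it. So you are assuming a converse to Proposition~\ref{prop:charsurf} rather than proving it.

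Your ``key computation'' is also aimed at the wrong difficulty. If such a $\Sigma$ exists, then $\Sigma^{\pm}$ is disjoint from $H\supset\Sigma$. Hence $lk(\beta,\beta^{\pm})=\Sigma\cdot\Sigma^{\pm}=0$, so $[\beta]^tA[\beta]=0$ on the nose, and $\kappa_n(p)=0$ by Lemma~\ref{lem:invariant}; no parity argument is needed. The parity of $a_1$ only becomes an issue when $\Sigma$ fails to exist. That parity is exactly where orientability of $F$ must enter, since the non-orientable group only retains $\kappa_n \bmod n$. Your heuristic that ``the normal bundle of $H$ is trivial, which gives a factor $2$'' does not address this case.

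Your fallback is essentially the paper's proof: decompose the cobordism surface into births, saddles and deaths. Births and deaths are fine as you state them: a split colored unknot has $\kappa_n=0$, and additivity does the rest. This is slightly cleaner than the paper's tubing onto the capping disk. For saddles, however, Lemma~\ref{lem:bandsum} applies only when the band $b$ is disjoint from $S$ away from its attaching arcs, and an arbitrary Seifert surface for $L$ will meet $b$. The paper supplies this step, which you omit:
\begin{enumerate}
\item Isotope so that $b\cap S$ consists of ribbon intersections.
\item Create one extra ribbon intersection next to an attaching arc by a kink, chosen so the tubing below stays orientable.
\item Tube $S$ along $b$ to cancel the new intersection against the first existing one.
\item Induct until $S$ is disjoint from $b$.
\end{enumerate}
Once this is done, $S\subset S'=S\cup b$, so the Seifert pairing of $S'$ restricts to that of $S$ and $[\beta]^tA'[\beta]=[\beta]^tA[\beta]$ exactly. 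There is nothing to ``upgrade'' from mod $n^2$ to mod $2n^2$.

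One small slip in your additivity step: a boundary connected sum along a band bounds a band sum of $L_p$ and $L_q$, not their split union. Only the tubed surface works, and its $H_1$ contains an extra belt class, so $A=A_p\oplus A_q\oplus[0]$.
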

\begin{proof} 
Suppose that $p_0$ and $p_1$ are cobordant and let $C\subset S^3\times I$ be a properly embedded smooth surface representing the colored branch set of a cobordism between $p_0$ and $p_1$. By considering an embedded handle decomposition of $C$ relative to $\partial S^3 \times I$, we can deduce that $C$ is obtained from the colored branch set $B_{p_0}$ of $p_0$ by a finite sequence of embedded $2$-dimensional handle attachments. In the following, we show that $\kappa_n$ is invariant under elementary colored cobordisms, i.e. under embedded colored $0$-, $1-$ and $2$- handle attachments. Since a colored $0$-handle attachment is just a colored $2$-handle attachment viewed upside down, we just need to show invariance of $\kappa_n$ under colored $1$- and $2$-handle attachments.

Let $L\subset S^3$ be a colored link and let $L'\subset S^3$ be obtained from $L$ via an embedded colored $1$-handle attachment in $S^3\times I$. This means that we can get $L'$ by gluing a colored oriented band $b$ to $L$ in $S^3$. We now claim that there exists a connected Seifert surface $S$ for the oriented link $L$ which is disjoint from the band $b$. Indeed, if $S$ is any connected Seifert surface for $L$, then $b$ possibly intersects $S$, and, after possibly performing a small ambient isotopy, we may assume that $b \cap S$ consists of a finite number of ribbon intersections in the interior of $S$. 
We number these ribbon intersections following the band $b$ from one component of its attaching region to the other and we denote them by $r_1, r_2, \dots, r_k$. We then add a new ribbon intersection $r$ between the first attaching region of $b$ and the first component, $r_1$, of $b \cap S^\circ$, by perturbing $b$ via an ambient isotopy of $S^3$ as in Figure \ref{fig:kink}.
 \begin{figure}[!htbp]
    \centering
    \begin{overpic}[width=0.4\linewidth]
     {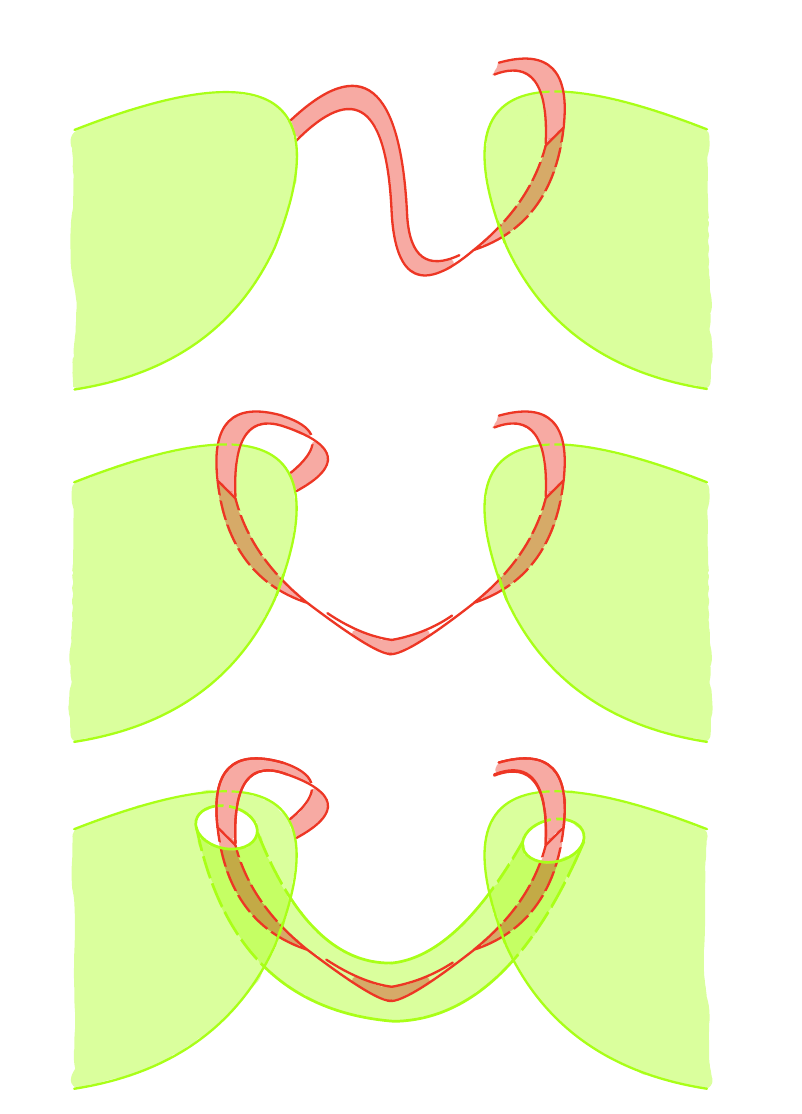}
        \put(7,50){\textcolor{green}{\Large{$S$}}}
        \put(7,81){\textcolor{green}{\Large{$S$}}}
        \put(7,19){\textcolor{green}{\Large{$S$}}}
        \put(30,23){\textcolor{red}{$b$}}
        \put(30,54){\textcolor{red}{$b$}}
        \put(30,85){\textcolor{red}{$b$}}
        \put(45,85){\textcolor{red}{$r_1$}}
        \put(45,54){\textcolor{red}{$r_1$}}
        \put(17,54){\textcolor{red}{$r$}}
    \end{overpic}
    \caption{Top: first intersection between $b$ and $S$. Middle: adding a ribbon intersection $r$ between $S$ and $b$. Bottom: removing $r$ and $r_1$ by adding a tube to $S$.}
    \label{fig:kink}
\end{figure}
We then carve out from $S$ two small disk neighbourhoods of $r$ and $r_1$ respectively, and join the resulting $S^1$-boundary components with a tube following the band $b$; refer again to Figure \ref{fig:kink}. 
Note that we can always add a kink as in Figure \ref{fig:kink} so that the resulting tube attachment yields again an orientable surface. In this way, we produce an oriented Seifert surface for $L$ having one less ribbon intersection with $b$. One can then inductively remove all the double points of $b \cap S$ and  get the desired Seifert surface, which we also denote by $S$. 

At this point, we have a Seifert surface $S$ which is disjoint from the band $b$ involved in the cobordism. Let $\beta \subset S^\circ$ be a mod $n$ characteristic link corresponding to the Fox coloring of the branch set $L$. Then $S'=S \cup b$ is a connected Seifert surface for $L'$ and Lemma \ref{lem:bandsum} tells us that a mod $n$ characteristic link for $L'$ is just given by the inclusion $\beta'$ of $\beta$ in $S'^\circ$ via $S^\circ \subset S'^\circ$. It is immediate to check that \[\kappa_n([\beta])=\kappa_n([\beta'])\] since, by the last sentence of Lemma~\ref{lem:bandsum}, the self-pairings of $\beta$ and $\beta'$ are equal on the nose.

We now address the case where $L'$ is obtained from $L$ by an embedded colored $2$-handle attachment. As usual, we let $S$ denote a Seifert surface for $L$. We first construct a new connected Seifert surface for $L$ which is disjoint from the given $2$-handle. Let $U$ be the colored unknotted split component of $L$ killed in the cobordism and let $D_U\subset S^3$ be the (Fox colored) $2$-disk capping it off. Take a connected Seifert surface $S''$ for $L'= L \setminus U$ disjoint from $D_U$ and let $S$ be the Seifert surface for $L$ obtained by joining $ S''$ and $D_U$ by an unknotted tube. Consider the Seifert surface for $L'$ defined by 
\[S'=S \cup D_U.\] 

It is easy to check that a mod $n$ characteristic link $\beta'$ for $L'$ is given by the inclusion of $\beta$ inside $S'$ via $S^\circ \subset S'^\circ$. We now claim that $\kappa_n([\beta])=\kappa_n([\beta'])$. Let $V'$ be a Seifert matrix for $S'$ with respect to a given basis of $H_1(S';\Z)$. Without loss of generality, we can assume that this basis is represented by curves disjoint from the disk $S'\backslash S$. Hence, we can complete this basis to one of $H_1(S;\Z)$ by adding the class of a parallel copy of the unknotted component $U$ in $S$; here we use that $U$ is a split component of $L$, so that $H_1(S;\Z)\cong H_1(S';\Z)\oplus \Z\langle [U]\rangle$. Since $U$ is split and unknotted, it has trivial linking number with every curve in $S'$, its self linking number is trivial and the associated Seifert matrix $V$ for $S$ is just $V' \oplus [ 0 ] $. Together with the fact that $\beta'$ is the image of $\beta$ under the inclusion $S \subset S'$, this concludes the argument.

We have shown that $\kappa_n$ induces a cobordism invariant
\[\kappa_n \colon Cob_{D_n}(S^3) \longrightarrow \Z_{2n}.\]
We now see that $\kappa_n$ is a group homomorphism. Indeed, let $\beta\subset S$ and $\beta'\subset S'$ be mod $n$ characteristic links for two $D_n$-covers, $p$ and $p'$, respectively. Using Lemma \ref{lem:transport}, one join $S$ and $S'$ via a tube to get a connected Seifert surface $S_{p+p'}$ for $p+p'$ and a mod $n$ characteristic link $\beta_{p+p'}\subset S_{p+p'}^\circ$ such that
\[[\beta_{p+p'}]^t\, A_{p+p'}\, [\beta_{p+p'}]=[\beta]^t \,A\, [\beta] + [\beta']^t \,A'\,[\beta'],\]
where $A, A'$ and $A_{p+q}$ denote symmetrized Seifert surfaces for $S, S'$ and $S_{p+q}$ respectively. Dividing by $n$ and reducing modulo $2n$, we obtain
\[\kappa_n([p]+[p'])=\kappa_n([p \sqcup p'])=\kappa_n([p])+\kappa_n([p']).\]
\end{proof}

In a similar fashion, we can upgrade the map $\kappa^{\leftrightarrow}_n$ in Lemma \ref{lem:nonorinv} to a non-orientable cobordism invariant.

\begin{proposition}\label{prop:cobordinvunor}
For every integer $n \geq 2$, the map $\kappa^{\leftrightarrow}_n$ from Lemma \ref{lem:nonorinv} induces a group homomorphism
\[\kappa^{\leftrightarrow}_n \colon Cob^{\leftrightarrow}_{D_n}(S^3) \longrightarrow \Z_n.\]

\end{proposition}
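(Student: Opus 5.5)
The plan is to mimic the proof of Proposition~\ref{prop:cobordinv}, now working with possibly non-orientable spanning surfaces and the Gordon--Litherland pairing, and reducing only modulo $n$. By Lemma~\ref{lem:nonorinv}, $\kappa^{\leftrightarrow}_n$ is well defined on $\hat Br_{D_n}(S^3)$. One computes it from any connected oriented Seifert surface $S$ for any choice of orientation on $L$, as $\tfrac1n[\beta]^tA[\beta] \bmod n$. The proof of that lemma gives more: for any spanning surface $\Sigma$ and any $\beta\subset\Sigma^\circ$ with $\rho(w)=y^{lk(\beta,w)}$ on $\pi_1(E_\Sigma)$, we have $\kappa^{\leftrightarrow}_n=\tfrac1n[\beta]^tU[\beta] \bmod n$. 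Indeed, by Lemma~\ref{lem:spanstab} we can relate $\Sigma$ to an oriented $S$ by tubings and half-twisted bands, and Lemma~\ref{lem:transport} preserves the self-pairing. Here $U[\beta]\equiv 0 \pmod n$ by Lemma~\ref{lem:glchar}, and a change $[\beta]\mapsto[\beta]+nx$ alters $[\beta]^tU[\beta]$ by a multiple of $n^2$. So I would first record this flexible description of $\kappa^{\leftrightarrow}_n$ via arbitrary spanning surfaces.

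Next, invariance under elementary colored cobordisms. A cobordism $C\subset S^3\times I$ need not be orientable, but it still decomposes into colored $0$-, $1$- and $2$-handle attachments, and the only new feature is that the bands of the $1$-handles may be attached non-orientably. The $0$- and $2$-handle cases are handled exactly as in Proposition~\ref{prop:cobordinv}, since the capped component $U$ is a split unknot and contributes a zero row and column. For a $1$-handle with band $b$, I would first make a connected spanning surface $S$ for $L$ disjoint from $b$. The ribbon-intersection removal by tubing along $b$ from Proposition~\ref{prop:cobordinv} works verbatim, and since orientability is no longer required, the kink trick is not even needed. Then $S\cup b$ spans $L'$, and Lemma~\ref{lem:bandsum} shows that $\beta\subset (S\cup b)^\circ$ satisfies $U'[\beta]\equiv0$ and induces the Fox coloring of $L'$. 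Because $b$ is disjoint from a neighbourhood of $\beta$, the Gordon--Litherland self-pairing $[\beta]^tU'[\beta]$ equals $[\beta]^tU[\beta]$, as in the half-twisted band case of Lemma~\ref{lem:transport}. By the flexible description above, this gives $\kappa^{\leftrightarrow}_n(L)=\kappa^{\leftrightarrow}_n(L')$.

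The main obstacle is this $1$-handle step. One must check that the Gordon--Litherland pairing of $\beta$ on $S\cup b$ really agrees with that on $S$ even when $b$ is non-orientable. This holds because $\tau^{-1}(\beta)$ is the same double push-off in both surfaces, since $\beta$ avoids $b$. One must also check that the flexible description legitimately applies to non-orientable $S\cup b$, which is exactly what the proof of Lemma~\ref{lem:nonorinv} handles via Lemma~\ref{lem:spanstab}. Finally, additivity follows exactly as at the end of Proposition~\ref{prop:cobordinv}: tube Seifert surfaces of the two split summands together and apply Lemma~\ref{lem:transport}, so the self-pairings add. Then reduce modulo $n$.
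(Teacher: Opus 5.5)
Your proposal is correct and follows essentially the same route as the paper. You reduce to elementary colored handle attachments and use Lemma~\ref{lem:bandsum} with Lemma~\ref{lem:glchar} to regard $\beta$ on $S\cup b$ as a Gordon--Litherland characteristic link whose self-pairing is unchanged. You then compare with any oriented Seifert-surface computation modulo $n^2$ as in the proof of Lemma~\ref{lem:nonorinv}, and get additivity as in Proposition~\ref{prop:cobordinv}.

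One small correction: when $b$ meets $S$ in an odd number of ribbon intersections, the kink is still needed, because each tube consumes two intersections and so an extra partner is required. What the non-orientable setting frees you from is only choosing the kink's direction to keep the surface orientable.
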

\begin{proof}
As already done in Proposition \ref{prop:cobordinv}, one can show that the assignment $\kappa^{\leftrightarrow}_n$ is invariant under elementary cobordisms. The proof is essentially the same, modulo the fact that non-orientable $1$-handles might be involved in a cobordism. We give a sketch of how to deal with that. Let $S$ be a connected Seifert surface for a Fox colored link $L$ and suppose that $L'$ is obtained from $L$ by an elementary cobordism corresponding to a single colored non-orientable $1$-handle attachment in $S^3 \times I$. In particular, we can view $L'$ as obtained from $L$ via a colored non-orientable band attachment in $S^3$. As in Proposition \ref{prop:cobordinvunor}, we can assume that $S$ is disjoint from the given band $b$, so that $S'=S \cup b$ is a connected spanning surface for $L'$. Lemma~\ref{lem:bandsum} implies that $\beta \subset S'$ is such that the Fox coloring of $L'$ restricts to
\[w \mapsto y^{lk(\beta,w)} \quad \text{for every } w \in \pi_1(E_{S'}),\]
and therefore, by Lemma~\ref{lem:glchar}, $U' \, [\beta]\equiv 0 \bmod{n}$, where $U'$ is a Gordon--Litherland matrix for $S'$. As in the proof of Lemma \ref{lem:nonorinv}, one can conclude that the congruence
\[[\beta]^t \, U' \, [\beta]\equiv [\beta']^t\, A' \, [\beta']  \pmod{n^2}\]
holds for any mod $n$ characteristic link $\beta'$ defining a representation which is inner equivalent to the one described by the Fox coloring of $L'$. Here $A'$ is a symmetrized Seifert matrix for the Seifert surface of $L'$ containing $\beta'$. If $p$ and $p'$ are the $D_n$-covers associated to the colored links $L$ and $L'$ and $A$ is a symmetrized Seifert matrix for $S$, it follows that
\[\kappa^{\leftrightarrow}_n([p])=\frac{1}{n} [\beta]^t \, A \, [\beta]=\frac{1}{n} [\beta]^t \, U' \, [\beta]=  \frac{1}{n} [\beta']^t \, A' \, [\beta'] = \kappa_n^{\leftrightarrow}([p']) \pmod{n}  ,\]
as desired.

The argument to show that $\kappa^{\leftrightarrow}_n$ is a group homomorphism is also identical to the one employed in Proposition \ref{prop:cobordinv} for $\kappa_n$ and is hence omitted.
\end{proof}

\color{black}
\section{Proof of Theorem \ref{thm:main}} \label{sec:proof}
\begin{proof}[Proof of Theorem \ref{thm:main}]

 When $n=1$, $D_n\cong\Z_2,$ so any two dihedral covers are cobordant and the groups are trivial. We hence let $n \geq 2$ in what follows. 
 
 By Proposition~\ref{prop:cobordinv}, $\kappa_n \colon Cob_{D_n}(S^3)\rightarrow \Z_{2n}$ is a well-defined group homomorphism. We easily check that $\kappa_n$ is injective.  Suppose $\kappa_n([p])=0 \in \Z_{2n}$. In the notation of Equation~\ref{eq:inv}, $\kappa_n([p])=a_1n+a_2 \bmod 2n$ with $0 \leq a_2 \leq n-1$. Thus, $\kappa_n([p])=0 \in \Z_{2n}$ implies that $a_2=0$ and $a_1 n \equiv 0 \bmod 2n$. It follows that $a_1$ is even. Proposition~\ref{prop:inj} then gives $[p]=0$.
 
Let us now study the image of $\kappa_n$. Since \[n\,\kappa_n([\beta])=[\beta]^t\, A\, [\beta]=2\,[\beta]^t\,V\,[\beta] \pmod{2n}\] is even, it follows that the integer $\kappa_n([\beta])$ is always even if $n$ is odd. This implies that, for $n$ odd, the image of $\kappa_n$ is contained in the subgroup of even residues of $\Z_{2n}$ (see also Remark~\ref{rem:knots}). The coloring of the $(2,n)$-torus link given in Example~\ref{ex:generator} satisfies 
 \[
 [\beta_n]^t\,A\,[\beta_n]= -n^2+n,
 \]
 which gives
 \[\kappa_n([\beta_n])=\frac{-n^2+n}{n}=1-n.\]
For $n$ even, $1-n$ generates $\Z_{2n}$ and $\kappa_n$ is onto. For $n$ odd, $1-n$ generates the subgroup of even residues, which is cyclic of order $n$. This implies that, in this case, the image of $\kappa_n$ is $\langle 2 \rangle \Z_{2n}\subset \Z_{2n}$. Together with injectivity, this proves the statements about $Cob_{D_n}(S^3)$.

Let us move to the non-orientable case. By Lemma~\ref{lem:nonorinv} and Proposition~\ref{prop:cobordinvunor}, $\kappa^{\leftrightarrow}_n ([p])=\kappa_n([p]) \bmod n$, where on the right-hand side we pick an arbitrary orientation on the branch set, is a well-defined group homomorphism  $Cob^{\leftrightarrow}_{D_n}(S^3)\to \Z_n$. If $\kappa^{\leftrightarrow}_n([p])=0$, then $a_2=0$ and Proposition~\ref{prop:injnon} gives $[p]=0 \in Cob^{\leftrightarrow}_{D_n}(S^3)$. The value of $\kappa^{\leftrightarrow}_n$ on the coloring of the $(2,n)$-torus link described in Example \ref{ex:generator} is $1-n\equiv 1 \bmod n$, so $\kappa^{\leftrightarrow}_n$ is onto, hence an isomorphism. \color{black}
\end{proof}

Using Theorem \ref{thm:main}, we can give a complete description of the following object.

\begin{definition}[Unoriented dihedral cobordism group]\label{def:unorientcob}
    Let $n \geq 1$ be an integer. The \textit{unoriented cobordism group of dihedral $n$-fold branched covers of $S^3$} is
\[Cob^{\mathfrak{u}}_{D_n}(S^3)
=
\left\{
p \colon Y\to S^3
\;\middle|\;
\substack{
p \text{ is a $D_n$-cover of $S^3$}\\
\text{branched over an unoriented link}
}
\right\}\big/\!\sim.
\]

where $p_0\sim p_1$ if and only if there exists a $D_n$-cover \[q \colon W \darrow{n} S^3 \times I\] branched over a smoothly embedded \textit{orientable} but \textit{unoriented} surface, such that the restriction of $q$ over $S^3\times \{0\}$ (resp. $S^3 \times \{1\}$) equals $p_0$ (resp. $p_1$). 

We endow $Cob^{\mathfrak{u}}_{D_n}(S^3)$ with an abelian group structure, in a manner identical to that in Definitions \ref{def:orientedcob} and \ref{def:unorcob}.
\end{definition} 

\begin{corollary}\label{cor:cob}
   For every integer $n \geq 1$, there is a group isomorphism
   \[Cob^{\mathfrak{u}}_{D_n}(S^3) \xlongrightarrow{\cong} Cob^{\leftrightarrow}_{D_n}(S^3),\]
   which is induced by the identity map on the set of $D_n$-covers.
\end{corollary}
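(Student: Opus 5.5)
The plan is to compare the two equivalence relations on the common underlying set $\hat Br_{D_n}(S^3)$ of $D_n$-covers branched over unoriented links. A cobordism in the sense of Definition~\ref{def:unorientcob} (branched over an orientable, unoriented surface) is in particular a cobordism in the sense of Definition~\ref{def:unorcob}. Hence the identity of $\hat Br_{D_n}(S^3)$ descends to a well-defined surjective map
\[\Phi\colon Cob^{\mathfrak{u}}_{D_n}(S^3)\longrightarrow Cob^{\leftrightarrow}_{D_n}(S^3).\]
Both group operations are split union of Fox colored branch sets, so $\Phi$ is a homomorphism. The whole content of the corollary is injectivity: it remains to show that if $[p]=0$ in $Cob^{\leftrightarrow}_{D_n}(S^3)$, then $[p]=0$ in $Cob^{\mathfrak{u}}_{D_n}(S^3)$. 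The case $n=1$ is trivial, as in the proof of Theorem~\ref{thm:main}, so assume $n\geq 2$.

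Take $p$ with $[p]=0\in Cob^{\leftrightarrow}_{D_n}(S^3)$ and fix any orientation on its branch set $L$. By Theorem~\ref{thm:recovery-of-dihedral-homs}, choose a connected Seifert surface $S$ and a mod $n$ characteristic link $\beta\subset S^\circ$ with $\omega_p=\rho_{\beta,\tau}$, and write $[\beta]^tA[\beta]=a_1n^2+a_2n$ as in Equation~\ref{eq:inv}. Since $\kappa^{\leftrightarrow}_n$ is a homomorphism on $Cob^{\leftrightarrow}_{D_n}(S^3)$ (Proposition~\ref{prop:cobordinvunor}) and $[p]=0$, we get $\kappa^{\leftrightarrow}_n(p)=a_2=0$.

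Now apply Proposition~\ref{prop:injnon} together with Remark~\ref{rmk:inj}. There are two cases.
\begin{itemize}
\item If $a_1$ is even, Proposition~\ref{prop:inj} produces a colored oriented surface in $D^4$ bounding $L$.
\item If $a_1$ is odd, then $p\sqcup 2H$ bounds an oriented colored surface in $D^4$. Moreover, $2H$ itself is null-cobordant through the two orientable band moves and two disk caps of Example~\ref{ex:Hopf}, all of which give an orientable surface.
\end{itemize}
Gluing the null-cobordism of $p\sqcup 2H$ with the reverse of the null-cobordism of $2H$ along the split $2H$ summand gives an orientable (though not necessarily coherently oriented) colored surface bounding $L$. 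Viewed as a cobordism in $S^3\times I$ to the empty cover, this shows $[p]=0$ in $Cob^{\mathfrak{u}}_{D_n}(S^3)$. Hence $\Phi$ is injective, and therefore an isomorphism.

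The main obstacle is the odd $a_1$ case: the null-cobordism of $2H$ in Example~\ref{ex:Hopf} must be verified to be orientable. The bands in Figure~\ref{fig:hopf} join the two Hopf links. I would check orientability by exhibiting an orientation of the two Hopf links (possibly reversing one component) for which both bands are coherent. Such an orientation exists: reversing the orientation of one entire Hopf link turns $2H$ into $H\sqcup(-H)$-type pieces, which cancel through orientable saddles. If the figure's bands do not work directly, I would instead use the concordance $H\times I$ in reverse to realize $2H$ as the boundary of an orientable annulus pair.
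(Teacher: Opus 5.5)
\textbf{There is a genuine gap: the gluing step in the case $a_1$ odd.} Your route is the paper's route, and the paper's proof rests on the same step through Remark~\ref{rmk:inj}.

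The point you flag as the main obstacle is harmless. Each band in Figure~\ref{fig:hopf} merges two distinct circles and the two resulting unknots are capped, so $G$ is a pair of annuli and is automatically orientable. (Reversing both components of a Hopf link gives back the same oriented link. The orientation making both bands coherent is $H\sqcup\bar H$, with exactly one component of one copy reversed.)

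What actually fails is compatibility of orientations along $2H$. Suppose the oriented surface $F$ from Proposition~\ref{prop:inj} is connected. Its two orientations induce on the $2H$ summand either the orientation of Example~\ref{ex:Hopf} or its reverse, and both have $\kappa_n=-n\not\equiv 0 \pmod{2n}$. An orientation of $G$ matching either one would make $G$ an oriented colored null-cobordism of $2H$, which Proposition~\ref{prop:cobordinv} rules out. So $F\cup\overline{G}$ is then non-orientable, and nothing in Proposition~\ref{prop:inj} controls the components of $F$.

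No cleverer choice of surfaces can fix this. Take $n=2$ and $L=T(2,4)$ with components $K_1,K_2$ colored $x$ and $xy$. On $S(2,4)$ the class $\beta=a_1+a_3$ is characteristic with $[\beta]^tA[\beta]=-4$. Hence $a_2=0$, $a_1=-1$, and $[p]=0$ in $Cob^{\leftrightarrow}_{D_2}(S^3)$. But $L$ bounds no orientable colored surface in $D^4$. Meridians of a connected surface are conjugate up to inversion, and $x\neq xy$ in the abelian group $D_2$, so $K_1$ and $K_2$ lie in different components $F_1,F_2$. Disjointness then forces $\pm 2=lk(K_1,K_2)=F_1\cdot F_2=0$, a contradiction. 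Remark~\ref{rmk:inj} asserts exactly this single-surface statement, so the example shows it is false as stated.

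\textbf{The repair is to work in the group rather than with one surface.} The same example shows, via Theorem~\ref{thm:main}, that ``cobound an orientable colored surface'' is not transitive. With the orientations above, $T(2,4)$ and $2H$ both have $\kappa_2=2$, so they cobound an oriented colored surface. Also $2H$ bounds the annuli $G$. Yet $T(2,4)$ bounds nothing orientable.

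So $Cob^{\mathfrak{u}}_{D_n}(S^3)$ must be read with the equivalence relation generated by orientable colored cobordisms. It is then a group under split union, just as in Proposition~\ref{prop:sumdef}. In this group, $[2H]=0$ because $G$ is orientable, and $[p]+[2H]=[p\sqcup 2H]=0$ because $F$ is oriented; hence $[p]=0$. Geometrically, $p$ is joined to the trivial cover by the cobordism $(L\times I)\sqcup\overline{G}$ followed by $F$, and each step is orientable.

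This is the cancellation from the proof of Proposition~\ref{prop:injnon}, run in $Cob^{\mathfrak{u}}_{D_n}(S^3)$ instead of $Cob^{\leftrightarrow}_{D_n}(S^3)$. It is all that injectivity of $\Phi$ requires.
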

In other words, two $D_n$-covers $p_0$ and $p_1$ branched over oriented links $L_0$ and $L_1$ cobound a non-orientable surface (over which the monodromies extend) if and only if they also cobound an orientable surface $F\subset S^3\times [0,1]$ (also extending the monodromies) such that the restriction of either orientation of $F$ to $\partial F$ does not necessarily respect the given orientations of $L_0$ and $L_1$.

\begin{proof}
    It is straightforward to observe that the identity map on the set of $D_n$-covers defines a surjective group homomorphism
\[Cob^{\mathfrak{u}}_{D_n}(S^3) \twoheadrightarrow Cob^{\leftrightarrow}_{D_n}(S^3).\]
In order to conclude, we have to show that this map is injective, i.e. that it has trivial kernel. Indeed, if $[p]=0\in Cob^{\leftrightarrow}_{D_n}(S^3)$, then Theorem \ref{thm:main} implies that $\kappa^{\leftrightarrow}_n([p])=0$. As a consequence, the monodromy $\omega_p$ can be described by means of a mod $n$ characteristic link $\beta \subset S^\circ$ satisfying $[\beta]^t \, A \, [\beta] \equiv 0 \bmod{n^2}$. Proposition \ref{prop:injnon} and Remark \ref{rmk:inj} then imply that $[p]=0\in Cob^{\mathfrak{u}}_{D_n}(S^3)$ as well. 
\end{proof}
\color{black}
\section{Modifying cobordism classes via moves à la Montesinos}\label{sec:mont}

In this section, we recall the following generalization of Montesinos moves for $3$-fold irregular dihedral covers to the setting of $D_n$-covers. 
\begin{figure}
    \centering
     \begin{overpic}[width=0.45\linewidth]{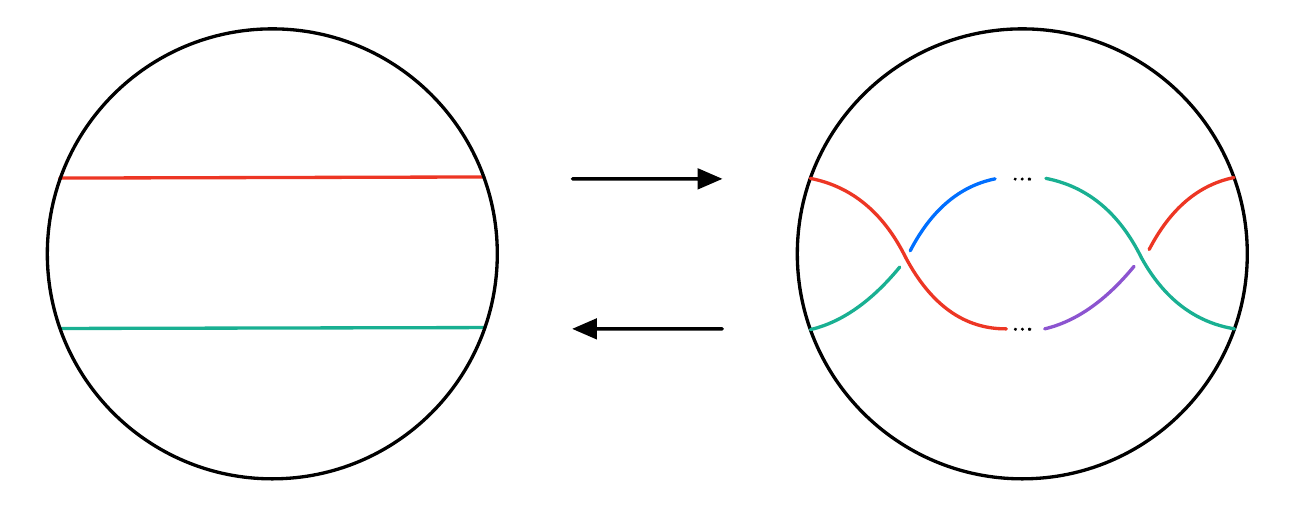}
        \put(46,29){\large{$M^+_n$}}
        \put(46,8.5){\large{$M^-_n$}}
        \put(8,29){\small{\textcolor{red}{$\tau y^k$}}}
        \put(8,11){\small{\textcolor{PineGreen}{$\tau$}}}
        \put(65,11){\small{\textcolor{PineGreen}{$\tau$}}}
        \put(90,11){\small{\textcolor{PineGreen}{$\tau$}}}
        \put(83,29){\small{\textcolor{red}{$\tau y^k$}}}
        \put(65,29){\small{\textcolor{red}{$\tau y^k$}}}
        \put(73,29){\small{\textcolor{blue}{$\tau y^{2k}$}}}
        \put(77,11){\small{\textcolor{violet}{$\tau y^{-k}$}}}        
    \end{overpic}
    \caption{The Montesinos moves $M^+_n$ and $M^-_n$. The twist region on the right contains $n$ crossings.}
    \label{fig:montesinos}
\end{figure}

\begin{definition}[Montesinos $n$-move]\label{def:mont}
Let $n \geq 2$ and let $p \colon Y \rightarrow S^3$ be a $D_n$-cover. We say that a $D_n$-cover $p'$ is obtained via a \textit{Montesinos $n$-move, $M^+_n$,} on $p$ if the following condition holds. There is an embedded $3$-disk $D \subset S^3$ intersecting the branch set of $p$ along two properly embedded unknotted arcs, labeled by reflections $\tau$ and $\tau y^k$ respectively, where $k \in \Z$ is such that $\gcd (n,k)=1$; and the Fox colored branch set of $p'$ is obtained from that of $p$ by adding $n$ half twists to the intersection of the branching set and the $3$-ball $D$; see Figure \ref{fig:montesinos}. We write $p'=p+M^+_n$. The inverse of a Montesinos $M^+_n$ move is denoted a Montesinos $M^-_n$ move. 
\end{definition}

We remark that, when $Y$ is connected, one can always find an embedded $3$-disk $D \subset S^3$ intersecting the branch set of $p$ as above so a Montesinos $n$-move can be performed. 
Next, we show that performing a Montesinos $n$-move as in Definition \ref{def:mont} does not change the total space of the given cover.

\begin{lemma}\label{lem:montinv}
 Let $n \geq 2$ be an integer and $p$ a $D_n$-cover with connected total space. The total space of the $D_n$-cover $p + M^\pm_n$ is homeomorphic to the one of $p$. 
\end{lemma}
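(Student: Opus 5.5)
The plan is to prove Lemma~\ref{lem:montinv} by a local cut-and-paste argument, mirroring the classical proof for Montesinos moves on $3$-fold simple covers. Let $D\subset S^3$ be the $3$-ball of Definition~\ref{def:mont}, meeting the branch set $L$ of $p$ in two trivial arcs colored $\tau$ and $\tau y^k$ with $\gcd(n,k)=1$. Outside $D$, the covers $p$ and $p'=p+M^\pm_n$ agree. So it suffices to show two things. First, the preimages of $D$ under the two covers are homeomorphic. Second, this homeomorphism can be chosen to restrict to the identity (or at least to a map isotopic to the identity, compatible with the covering) on the preimage of $\partial D$. Then $Y$ and $Y'$ are obtained from the same manifold $p^{-1}(S^3\setminus D^\circ)$ by gluing homeomorphic pieces along the same boundary.

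First I would identify the boundary data. The sphere $\partial D$ meets $L$ in four points, colored $\tau,\tau y^k,\tau y^k,\tau$ in both diagrams, because adding $n$ half twists to two strands colored $\tau,\tau y^k$ returns the colors to their original positions. Concretely, the $n$ half twists act on the pair of colors by the dihedral braid action $(a,b)\mapsto(aba,a)$, which has period $n$ when $\langle a,b\rangle\cong D_n$, and Figure~\ref{fig:montesinos} records exactly this. Hence $p^{-1}(\partial D)$ and $p'^{-1}(\partial D)$ are the same cover of the $4$-punctured sphere.

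Next I would note that, since $\gcd(n,k)=1$, the local monodromy on $D\setminus L$ surjects onto $D_n$. Its regular cover over $(D,\text{trivial tangle})$ is then a handlebody-type piece. I would pass to the intermediate double branched cover: the double cover of $D$ branched over a rational tangle is a solid torus $V$. The dihedral cover of $D$ factors as the $n$-fold cyclic unbranched cover of $V$ determined by the rotation part $y^k$ of the monodromy (in the spirit of Proposition~\ref{prop:homology-of-double-cover} and the discussion preceding Proposition~\ref{prop:characteristic-link-gives-character}). The $n$ half twists lift to the double cover as $n/2$ full Dehn twists along the meridian disk of $V$ separating the two arcs, when that quantity is integral, or more generally to the homeomorphism $h^n$ of $V$, where $h$ is the lift of a half twist. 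The key point is that $h^n$ lifts to the $n$-fold cyclic cover as a homeomorphism that extends over the interior. The boundary restriction of the lifted twist is then an $n$-fold power of a twist on the covering torus, which upstairs becomes a single Dehn twist along a curve bounding a disk. Such a twist extends over the solid torus or handlebody upstairs, so the twisted gluing produces the same manifold.

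The main obstacle is to make this last step precise for the irregular (and possibly disconnected-over-$D$) covers. Specifically, I must verify that the lift of the $n$ half twists to $p^{-1}(\partial D)$ is a product of Dehn twists along curves that bound disks in $p^{-1}(D)$. To do this, I would count preimages of the twisting disk $\Delta\subset D$ (the disk separating the two arcs, whose boundary is a loop with monodromy $\tau\cdot\tau y^k=y^k$ of order $n$). Each component of $p^{-1}(\partial\Delta)$ covers $\partial\Delta$ with degree $n$ or degree dividing $n$, so the $n$-fold twist about $\partial\Delta$ lifts to integral Dehn twists about these components. Each component bounds a lift of $\Delta$ in $p^{-1}(D)$, a handlebody. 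Dehn twists along meridian curves extend over handlebodies, which yields $p^{-1}(D)\cong p'^{-1}(D)$ rel boundary, up to the extension, and hence $Y\cong Y'$. The connectedness hypothesis on $Y$ is used only to guarantee that such a $D$ exists.
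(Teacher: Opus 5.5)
\textbf{There is a genuine gap, at exactly the step you call the key point, and it cannot be patched, because for the regular cover the statement is false.}

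\textbf{Where your argument fails.} Your mechanism is fine. The half twist on $\partial D$ lifts (Birman--Hilden) to the Dehn twist $T_c$ of the torus $\partial V$ along the curve $c$ covering the arc that joins the two twisted endpoints. Since $c$ carries monodromy $y^{\pm k}$ of order $n$, the cyclic cover $T\to V$ (with $T:=p^{-1}(D)$ in the regular cover) unwraps $c$, so $T_c^n$ lifts to a single Dehn twist $T_{\tilde c}$ of $\partial T$.

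What fails is the identification of the curve. The meridian disks of $V$ are the two lifts of the vertical disk separating the strands, and $c$ meets each of them once. So $c$ is a longitude of $V$, not a meridian. In particular neither $T_c$ nor your $h^n$ extends over $V$. If the lift of a half twist extended, half twists would never change double branched covers; yet one half twist turns the two-component unlink ($\Sigma_2=S^1\times S^2$) into the unknot.

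Likewise $\tilde c$ is a longitude of $T$. The twist $T_{\tilde c}$ sends the meridian to meridian $\pm$ longitude, so it does not extend over $T$.

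In the terms of your last paragraph: in the regular cover, $p^{-1}(\Delta)$ is connected with Euler characteristic $-2n+2n=0$. It is therefore an annulus whose two boundary circles are parallel copies of $\tilde c$, so no component of $p^{-1}(\partial\Delta)$ bounds a disk. Moreover, each of these circles covers $\partial\Delta$ with degree $n$, so the $n$ half twists lift near it to half rotations, not to integral Dehn twists.

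\textbf{Counterexample.} Let $p$ be branched over the two-component unlink with colors $\tau,\tau y^k$, where $\gcd(n,k)=1$. The move turns the branch set into $T(2,n)$. The regular cover is an unbranched $n$-fold cyclic cover of the double branched cover of the branch set. Before the move it is therefore $S^1\times S^2$. After the move it is the universal cover $S^3$ of $L(n,1)=\Sigma_2(T(2,n))$.

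\textbf{The paper's proof has the same problem.} It factors through the irregular cover rather than through $\Sigma_2$, and it hits the same obstruction. Its lift to the top annulus of the solid torus fixes the core pointwise. Hence it covers a map fixing the two branch points of $T\to\widetilde D$ there, whereas the rotation by $\pi$ it must cover swaps them. The correct lift differs by a Dehn twist along that core, which is again a longitude of $T$.

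\textbf{What does hold.} The corresponding statement for the $n$-fold irregular cover is true, and your cut-and-paste framework proves it with no twist bookkeeping. There, $y^k$ acts on $D_n/H$ as an $n$-cycle, so $p^{-1}(\Delta)$ is a disk: its Euler characteristic is $-n+(n+1)=1$ and its boundary is a single circle. Hence $p^{-1}(D)\cong p^{-1}(\Delta)\times I$ is the $3$-ball $\widetilde D$. Any regluing along its boundary sphere then extends by Alexander's trick. This classical Montesinos argument is the version that Corollary~\ref{cor:4-ball-covered} actually uses.
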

   \begin{figure}
    \centering
     \begin{overpic}[width=0.75\linewidth]{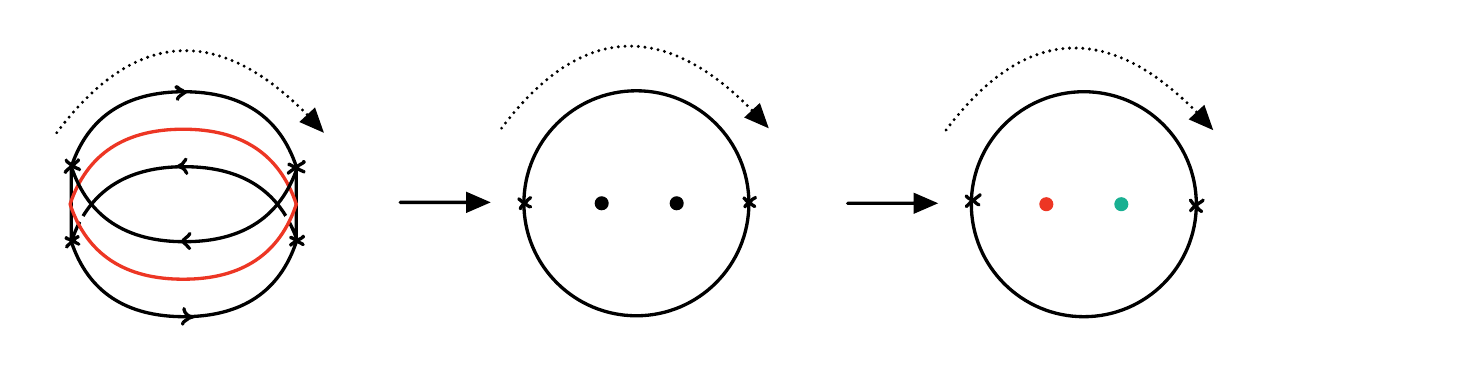}
        \put(21,12){\large{$\times I$}} 
        \put(52,12){\large{$\times I$}}
        \put(83,12){\large{$\times I$}}
        \put(81,20){$n$ half twists}
        \put(50,20){$1$ half twist}
        \put(20,20){a Dehn twist}
    \end{overpic}
    \caption{    Right: the $3$-disk $D\cong D^2\times I$ with a cross-section of the colored branch set of the $n$-fold irregular cover $\widetilde D \rightarrow D$. The branch set can be represented by two parallel strands. A Montesinos $n-$move is equivalent to performing $n$ half-twists on those strands. Middle: the $3$-disk $\widetilde D\cong D^2\times I$. The (pre-move) branch set of the cover $\widetilde D \rightarrow D$ lifts to two parallel strands; the Montesinos $n$-move lifts to a single half-twist. The two embedded arcs are also the branch set of the double cover $T \rightarrow \widetilde D$. Left: the solid torus $T$, written as the product of an annulus and an interval. The annulus is a double cover of $\widetilde D$ branched along the two black points. The half-twist on $\widetilde D$ lifts to a Dehn twist along the red curve, which extends over the solid torus.    }
    \label{fig:montesinos2}
\end{figure}
\begin{proof}
 The total space of $p$ is homeomorphic to a double branched cover of the total space of an associated irregular dihedral cover $p' \colon Y' \rightarrow S^3$, where $Y'$ is the quotient of $Y$ by the action of a fixed reflection (see the discussion after Definition \ref{def:Dncover}). Consider the $3$-ball $D$ involved in the Montesinos move (depicted in Figure \ref{fig:montesinos}) and write $D= D^2 \times I$, where $(D^2\times \{t\}) \cap B_p$ consists of two points for every $t \in I$ and the twisting performed during the Montesinos move happens along the $I$-factor. We can regard an $n$-move on the branching link as realized by carving out $D$ from $S^3$ and re-gluing it via a self-diffeomorphism on the boundary. Specifically, we can use a self-diffeomorphism of $\partial D$ which is the identity on $D^2 \times \{0\}$; a concatenation of $n$ (clockwise or counter-clockwise, depending on the sign) half twists of the $S^1$-factor on $S^1 \times I$; and the identity (resp. a rotation by $\pm \pi$ around the origin) on $D^2 \times \{1\}$ when $n$ is even (resp. $n$ is odd). 
 
 The lift of $D$ to the irregular cover $p'$ is again a $3$-ball (this is well known and easily seen by an Euler characteristic argument) which we denote by $\widetilde D$. We also write $\widetilde D = \widetilde D^2 \times I$, where $\widetilde D^2 \times \{ t\}$ covers $D^2\times \{t\}$ in degree $n$ for every $t \in I$. Similarly, $\partial \widetilde D^2 \times \{ t\}\to \partial D^2\times \{t\}$ is the $n$-to-1 cover $S^1\to S^1$. It follows that the lift via $p'$ of the self-diffeomorphism of $\partial D$ used to perform the Montesinos $n$-move is a single half twist of the $S^1$-factor on $\partial (\widetilde D^2 )\times I\cong S^1 \times I$, the identity on $\widetilde D^2 \times \{0\}$ and (regardless of the parity of $n$) a rotation by $\pm \pi$ around the origin on $\widetilde D^2 \times \{1\}$. 

 We now consider the lift of $\widetilde D$ in the regular cover. This is the double cover of $\widetilde D$, branched along two trivially embedded proper arcs, i.e. a solid torus $T \cong (S^1 \times I) \times [0,1]$, where $(S^1 \times I)\times \{t\}$ doubly covers $ \widetilde D^2 \times \{t\}$ for every $t \in [0,1]$. Figure~\ref{fig:montesinos2} depicts of $T$, $\widetilde D$ and $D$. It is a standard exercise to see that the given self-diffeomorphism of $\partial \widetilde D$ lifts to a map on $\partial (S^1 \times I \times [0, 1])$, which is the identity on $S^1 \times I \times \{0\}$ and a Dehn twist on the annulus $S^1 \times I \times \{1\}$ along its core. On $S^1 \times \partial I \times [0,1]$, the lift is an isotopy (with time parameter given by the last $I$-factor) between the identity map of $S^1 \times \partial I \times \{0\}$ and the result of performing two opposite half twists on the two connected components of $S^1 \times \partial I\times\{1\}$. In $S^1\times I\times \{1\}$, moving along the $I$ direction toward the core of the annulus, each of these half-twists peters out to zero, so that the core of $S^1\times I\times \{1\}$ is fixed. 
 
 The last step is to show that this self-map on the boundary extends to the interior of the solid torus $T$. Indeed, we may regard  $T=(S^1 \times I) \times [0,1]$ as the trace of the natural isotopy between the identity on $S^1\times I\times \{0\}$ and the result of performing on $S^1\times I\times \{1\}$ the self-map described in the previous paragraph. The circle $S^1\times \{\frac{1}{2}\}$ is fixed during this isotopy, and the trace of the restriction of the isotopy to $S^1\times \partial I$ is exactly the lift via $p$ of the given self-diffeomorphism of $\partial D$. 
\end{proof}
In the next lemma, we explain how Montesinos $n$-moves change the cobordism class of a given $D_n$-cover. 
\begin{lemma}\label{lem:modify}
Let $p$ be a $D_n$-cover with connected total space branched over an oriented link. Then $p \pm M^\pm_n$ is cobordant to the split sum of $p$ with the connected cover determined by a $(2,n)$-torus link, see Remark \ref{rmk:torus}.
\end{lemma}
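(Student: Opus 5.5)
The plan is to build an explicit colored cobordism in $S^3\times I$ from $p\pm M^\pm_n$ to the split union $p\sqcup(T(2,n),\rho_n)$, or its mirror, using only colored band moves and colored births and deaths of unknots. The key local observation is the following. In the ball $D$ of Definition~\ref{def:mont}, the branch set consists of two parallel unknotted arcs colored $\tau$ and $\tau y^k$, with $\gcd(n,k)=1$. Adding $n$ half twists to them is, up to isotopy, the same as band-summing $p$ with a split copy of $T(2,n)$ along two bands.

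\textbf{Step 1 (birth).} Start from $p$. Perform a colored $0$-handle to create a split $T(2,n)$ inside a small ball near $D$. Give it the Fox coloring of Remark~\ref{rmk:torus}, with its two strands colored $\tau$ and $\tau y^k$. This is possible because $\gcd(n,k)=1$, and it is equivalent to $\rho_n$ by that remark. Strictly, $T(2,n)$ is not an unknot, so I will not use a single $0$-handle. Instead I will use the split union directly: by Proposition~\ref{prop:sumdef} the class $[p]+[T(2,n)]$ is represented by $p\sqcup T(2,n)$, so it suffices to produce a cobordism from this split union to $p\pm M^\pm_n$.

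\textbf{Step 2 (bands).} Choose two bands $b_1,b_2$. The band $b_1$ joins the $\tau$-arc of $L\cap D$ to a $\tau$-colored strand of the torus link, and $b_2$ joins the $\tau y^k$-arc to a $\tau y^k$-colored strand. Both bands respect colors, so they are colored $1$-handles. They can be chosen compatibly with the orientations, since the two arcs in $D$ are oriented consistently with the two parallel strands of the positively or negatively oriented $T(2,n)$; for $M^-_n$ we use the mirror. Cutting $T(2,n)$ open at the two points where the bands attach turns it into the twist box of $n$ half-twists, spliced into the two arcs in $D$. After two oriented band moves the result is isotopic to the Fox colored link of $p+M^+_n$, and the colorings agree, as shown in Figure~\ref{fig:montesinos}. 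Let $\mu_1,\mu_2$ be the meridians of the two bands. Each is a product of meridians carrying equal colors, so the coloring extends across the $1$-handles, as in the proof of Lemma~\ref{lem:bandsum}.

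\textbf{Step 3 (orientations and obstacle).} The main obstacle is checking that the bands can be made orientation-compatible, so that the cobordism lives in the oriented group $Cob_{D_n}(S^3)$, and that the resulting surface is connected and properly embedded. This depends on the relative orientations of the two arcs in $D$, which can be parallel or antiparallel. If they are antiparallel, I would instead use the oppositely oriented $T(2,n)$ or its mirror, which accounts for the ``$\pm$'' in the statement. As a sanity check, I would compute $\kappa_n$ of both ends using Lemma~\ref{lem:bandsum} and Proposition~\ref{prop:cobordinv}; the invariant should change by $\pm(1-n)$. Connectedness of the total space of $p$ is used only to guarantee the existence of $D$ with two differently colored arcs.
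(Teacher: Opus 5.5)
Your proposal is correct and matches the paper's proof. That proof consists only of Figure~\ref{fig:cob}, a cobordism from $p\sqcup T(2,n)$ to $p+M^+_n$ obtained by attaching two colored bands that join each arc in $D$ to the like-colored closing strand of the split torus link, exactly as in your Step~2.

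Two small corrections to your Step~3. First, when $n$ is odd, reversing or mirroring the knot $T(2,n)$ cannot fix antiparallel arcs, because its two strands through the twist box are always parallel. In that case no orientation of $p+M^+_n$ agrees with that of $p$ outside $D$, so the arcs must be taken parallel; the paper's one-line proof also leaves this point implicit. Second, for the coloring $\tau,\tau y^k$ your $\kappa_n$ sanity check should give a change of $\pm k^2(1-n)$ rather than $\pm(1-n)$, by Remark~\ref{rmk:covesvsmonodromy}.
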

\begin{proof}
 A cobordism is depicted in Figure \ref{fig:cob}. Alternatively, one may use the argument given in the proof of Corollary~\ref{cor:4-ball-covered}.
     \begin{figure}
    \centering
     \begin{overpic}[width=0.45\linewidth]{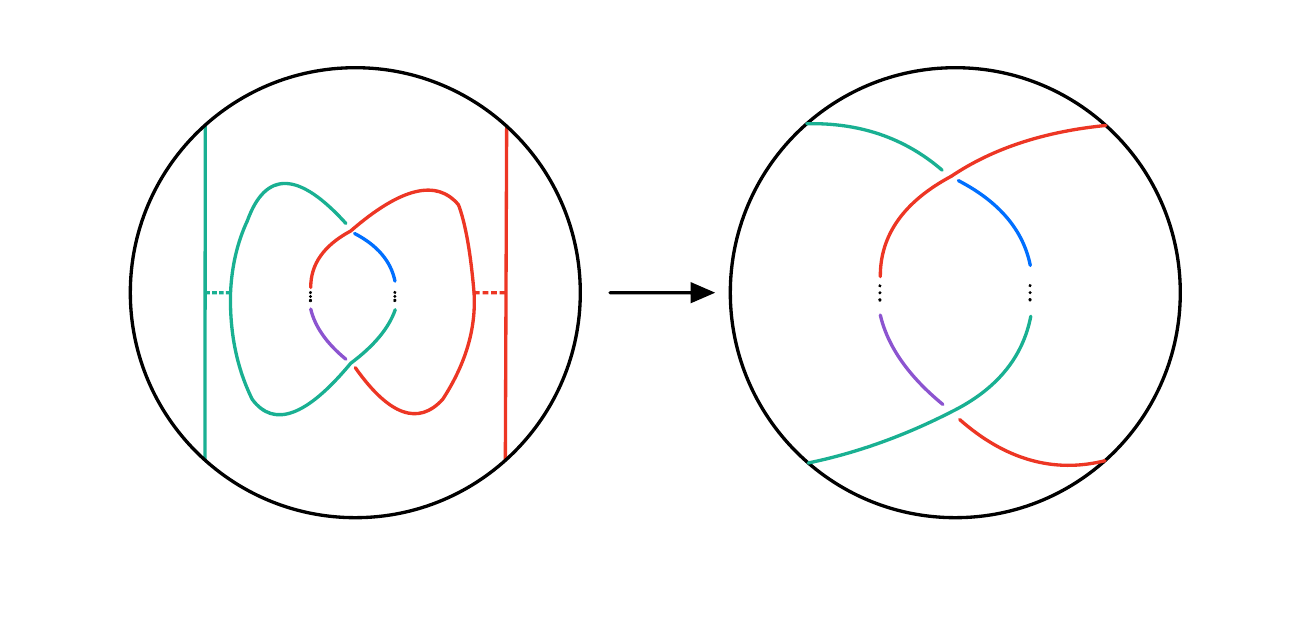}
         
    \end{overpic}
    \caption{A cobordism from $p \sqcup T(2,n)$ to $p+M^+_n$, obtained by attaching colored bands along the two dashed lines on the left side of the figure.}
    \label{fig:cob}
\end{figure}
\end{proof}
\begin{proof}[Proof of Theorem \ref{thm:C}]
  Let $p$ be a $D_n$-cover. Since the connected cover $T(2,n)$ branched over a $(2,n)$-torus link generates $Cob_{D_n}(S^3)$ (see Remark \ref{rmk:torus}), there exists an integer $m$ such that \[[p]+m\, [T(2,n)]=0\in Cob_{D_n}(S^3).\]  By Lemma \ref{lem:modify}, we have that $[p]+m \, [T(2,n)]=[p']$, where $p'=p+|m| \, M^\pm_n$. Lemma \ref{lem:montinv} then implies that $p$ and $p'$ have the same total space and, since $[p']=0$ in $Cob_{D_n}$, the conclusion follows.
\end{proof}
\color{black}
\begin{proof}[Proof of Corollary~\ref{cor:4-ball-covered}]
    Given $Y^3$ as above, there exists a $3$-fold irregular dihedral cover $f\colon Y\to S^3$ branched over some knot $K$ (see \cite{Hi}, \cite{Hirsch} and \cite{M2}). Let $\beta$ be a mod~$3$ characteristic knot for $K$, inducing $f$ in the sense of Equation \ref{eq:rho}, and contained in a Seifert surface $S$ for $K$. Regarding $S$ as a union of a disk and some bands, we know that $\beta$ passes through at least one non-trivially colored band $b$ in $S$. Moreover, after possibly passing to another characteristic knot in the same equivalence class, we may assume that $\beta$ passes through $b$ exactly once (with either positive or negative orientation). Performing two Montesinos 3-moves on $b$ induces a new 3-fold dihedral cover $f': Y\to S^3$ whose corresponding characteristic knot is the image of $\beta$ under the twisting operation. It is a straightforward computation that this operation changes the self-linking number of $\beta$ (with respect to the push-off determined by the Seifert surface) by $\pm 3$, depending on the sign of the $6$ added half-twists. We thus have $\kappa_3([f'])=\kappa_3([f])\pm 2$. Iterating this operation as needed, we arrive at a $3$-fold irregular dihedral cover $Y\to S^3$ which represents the trivial element of $Cob_{D_3}(S^3)$.
\end{proof}

\section*{Note on the use of AI}
In the course of preparing this manuscript, we used Claude and ChatGpt to verify proofs and improve the exposition. AI proposed stating a single result, Theorem A, which covers both the oriented and non-orientable case. In a previous draft, those were treated separately. 

\section*{Acknowledgements} This project started at the {\it Trisections and related topics} research school at CIRM in October of 2025. A first draft was completed during the conference {\it Singularities in topology and physics} at the University of Notre Dame in August 2026. We gratefully acknowledge funding from NSF DMS 2555770. VB is partially supported by GNSAGA – Istituto Nazionale di Alta Matematica ``Francesco Se\-ve\-ri'', Italy. During the course of this work, AK was partially supported by NSF DMS 2204349 and by a Simons Foundation Travel Grant for Mathematicians.

\section*{The Colored Fruit}

\textit{Each cover branched on a link in $S^3$}\\
\textit{Hanging down my dihedral tree}\\
\textit{Is cobordant to a chorus}\\
\textit{Of twin links on the torus}\\
\textit{A surface conducts the monodromy}

\bigskip
\medskip
\medskip

\noindent
\textsc{Valentina Bais}\\
Department of Mathematics, SISSA\\
Via Bonomea 265, 34136 Trieste, Italy\\
\texttt{vbais@sissa.it}

\medskip
\medskip
\medskip

\noindent
\textsc{Alexandra Kjuchukova}\\
Department of Mathematics, University of Notre Dame\\
Notre Dame, Indiana 46656, USA\\
\texttt{akjuchuk@nd.edu}
\end{document}